 \newtheorem{thm}{Theorem}[section]
 \newtheorem{lem}[thm]{Lemma}
 \newtheorem{prop}[thm]{Proposition}
 \theoremstyle{definition}
 \newtheorem{defn}[thm]{Definition}
 \theoremstyle{remark}
 \newtheorem{rem}[thm]{Remark}
 \numberwithin{equation}{section}
\begin{document}

%
%
%
%
%
%
%
%

\newcommand{\Z}{{\mathbb Z}}
\newcommand{\Q}{{\mathbb Q}}
\newcommand{\R}{{\mathbb R}}
\newcommand{\C}{{\mathbb C}}
\newcommand{\N}{{\mathbb N}}
\newcommand{\ttt}{|\hspace{-0.25mm}|\hspace{-0.25mm}|}
\renewcommand{\Re}{\mathop{\rm Re}\nolimits}
\renewcommand{\Im}{\mathop{\rm Im}\nolimits}

\def\sn{{\bf S}^{n-1}}
\def\ts{\tilde{\sigma}}
\def\ss{{\mathcal S}}
\def\aa{{\mathcal A}}
\def\cc{{\mathcal C}}
\def\tpp{\widetilde{{\mathcal P}}}
\def\iu{\underline{i}}
\def\lc{{\mathcal L}}
\def\kc{{\mathcal K}}
\def\pxi{\phi + (\xi + i u)\psi_p}
\def\pp{\mathcal P}
\def\rr{\mathcal R}
\def\hU{\widehat{U}}
\def\mt{\Lambda}
\def\hLa{\widehat{\mt}}
\def\ep{\epsilon}
\def\tPhi{\widetilde{\Phi}}
\def\hR{\widehat{R}}
\def\oV{\overline{V}}
\def\uu{\mathcal U}
\def\tz{\tilde{z}}
\def\hz{\hat{z}}
\def\hd{\hat{\delta}}
\def\ty{\tilde{y}}
\def\hs{\hat{s}}
\def\hc{\hat{\cc}}
\def\hC{\widehat{C}}
\def\hh{\mathcal H}
\def\tf{\tilde{f}}
\def\of{\overline{f}}
\def\trr{\tilde{r}}
\def\tr{\tilde{r}}
\def\tts{\tilde{\sigma}}
\def\tVl{\widetilde{V}^{(\ell)}}
\def\tVj{\widetilde{V}^{(j)}}
\def\tVo{\widetilde{V}^{(1)}}
\def\tVj{\widetilde{V}^{(j)}}
\def\tPsi{\tilde{\Psi}}
 \def\tp{\tilde{p}}
 \def\tVjo{\widetilde{V}^{(j_0)}}
\def\tvj{\tilde{v}^{(j)}}
\def\tVjj{\widetilde{V}^{(j+1)}}
\def\tvl{\tilde{v}^{(\ell)}}
\def\tVt{\widetilde{V}^{(2)}}
\def\Lo{\; \stackrel{\circ}{L}}
\def\tg{\tilde{g}}
\def\hq_n{\hat{q}_n}

\def\ii{{\imath }}
\def\jj{{\jmath }}
\vspace*{0,8cm}
\def\saa{\Sigma_A^+}
\def\sAA{\Sigma_{\aa}^+}
\def\Lip{\mbox{\rm Lip}}
\def\clip{C^{\mbox{\footnotesize \rm Lip}}}
\def\lip{\mbox{{\footnotesize\rm Lip}}}
\def\Vol{\mbox{\rm Vol}}

\def\gl{\gamma_\ell}
\def\id{\mbox{\rm id}}
\def\piU{\pi_U}
\def\bs{\bigskip}
\def\ms{\medskip}
\def\Int{\mbox{\rm Int}}
\def\diam{\mbox{\rm diam}}
\def\di{\displaystyle}
\def\dist{\mbox{\rm dist}}
\def\ff{{\cal F}}
\def\i{{\bf i}}
\def\pr{\mbox{\rm pr}}
\def\co{\; \stackrel{\circ}{C}}
\def\la{\langle}
\def\ra{\rangle}
\def\supp{\mbox{\rm supp}}
\def\Arg{\mbox{\rm Arg}}
\def\Int{\mbox{\rm Int}}
\def\II{{\mathcal I}}
\def\e{\emptyset}
\def\endofproof{{\rule{6pt}{6pt}}}
\def\con{\mbox{\rm const }}
\def\Box{\spadesuit}
\def\be{\begin{equation}}
\def\ee{\end{equation}}
\def\beqn{\begin{eqnarray*}}
\def\eeqn{\end{eqnarray*}}
\def\MM{{\mathcal M}}
\def\tmu {\tilde{\mu}}
\def\Pr{\mbox{\rm Pr}}
\def\Prf{\mbox{\footnotesize\rm Pr}}
\def\htau{\hat{\tau}}
\def\btau{\overline{\tau}}
\def\hr{\hat{r}}
\def\tF{\widetilde{F}}
\def\tG{\widetilde{G}}
\def\trho{\tilde{\rho}}

\title[Sharp large deviations] {Sharp large deviations for hyperbolic flows}

\author[V. Petkov]{Vesselin Petkov}
\address{Universit\'e de Bordeaux, Institut de Math\'ematiques de Bordeaux, 351,
Cours de la Lib\'eration, 
33405  Talence, France}
\email{petkov@math.u-bordeaux.fr}
\author[L. Stoyanov]{Luchezar Stoyanov}
\address{University of Western Australia, School of Mathematics 
and Statistics, Perth, WA 6009, Australia}
\email{stoyanov@maths.uwa.edu.au}

\subjclass{Primary 37D35, 37D40; Secondary 60F10}

\keywords{hyperbolic flows, large deviations, Tauberian theorem for sequence of functions}

\def\ts{\tilde{\sigma}}
\def\ss{{\mathcal S}}
\def\aa{{\mathcal A}}
\def\R{{\mathbb R}}
\def\C{{\mathbb C}}
\def\iu{\underline{i}}
\vspace*{0,8cm}
\def\saa{\Sigma_A^+}
\def\sAA{\Sigma_{\aa}^+}
\def\lc{{\mathcal L}}
\def\pxi{\phi + (\xi + i u)\psi_p}
\def\sa{\Sigma_A}
\def\ssa{\Sigma_A^+}
\def\ssn{\sum_{\sigma^n x = x}}
\def\lc{{\mathcal L}}
\def\lo{{\mathcal O}}
\def\oo{{\mathcal O}}
\def\mt{\Lambda}
\def\ep{\epsilon}
\def\ms{\medskip}
\def\bs{\bigskip}
\def\diam{\mbox{\rm diam}}
\def\rr{\mathcal R}
\def\pp{\mathcal P}
\def\hU{\widehat{U}}
\def\hz{\hat{z}}
\def\tz{\tilde{z}}
\def\ty{\tilde{y}}
\def\tx{\tilde{x}}
\def\i{{\bf i}}
\def\pp{{\mathcal P}}
\def\ff{{\mathcal F}_{\theta}}
\def\tG{\tilde{G}}
\def\hw{\hat{w}}
\def\ep{\epsilon}
\def\rt{R^{\tau}}
\def\stt{\sigma_{\tau}^t}
\def\m{{\sf m}}
\def\lc{{\mathcal L}}
\def\vt{\varphi^{\tau}}
\def\tF{\tilde{F}}
\def\pc{{\mathcal P}}
\def\tq{\tilde{q}}
\def\hU{\widehat{U}}
\def\hR{\widehat{R}}
\def\hrt{\widehat{R}^\tau}

\begin{abstract}
 For hyperbolic flows $\varphi_t$ we examine  the Gibbs measure of points $w$ for which
$$\int_0^T G(\varphi_t w) dt - a T \in (- e^{-\ep n}, e^{- \ep n})$$
as $n \to \infty$ and $T \geq n$, provided  $\ep > 0$  is sufficiently small. 
This is similar to local central limit theorems. The fact that the interval $(- e^{-\ep n}, e^{- \ep n})$ is exponentially shrinking as $n \to \infty$ leads to several difficulties.  
Under some geometric assumptions we establish a sharp large deviation result with leading term $C(a) \ep_n e^{\gamma(a) T}$ and rate function $\gamma(a) \leq 0.$ 
The proof is based on the spectral estimates for the iterations of the Ruelle operators with two complex parameters and on a new Tauberian theorem for sequence of 
functions $g_n(t)$ having an asymptotic as $ n \to \infty$ and  $t \geq n.$

\end{abstract}

\maketitle

\section{Introduction}
\renewcommand{\theequation}{\arabic{section}.\arabic{equation}}
\setcounter{equation}{0}

Let  $\varphi_t: M \longrightarrow M$ be a $C^2$ weak mixing Axiom A flow on a compact Riemannian  manifold $M$, and let $\mt$ be a basic set 
for $\varphi_t$. The restriction of the flow on 
$\Lambda$ is a hyperbolic flow \cite{PP}. For any $x \in M$ let $W_{\epsilon}^s(x), W_{\epsilon}^u(x)$ be the local stable and
unstable manifolds through $x$,  respectively (see \cite{B2}, \cite{KH},  \cite{PP}). 
It follows from the hyperbolicity of $\mt$  that if  $\epsilon_0 > 0$ is sufficiently small,
there exists $\ep_1 > 0$ such that if $x,y\in \mt$ and $d (x,y) < \ep_1$, 
then $W^s_{\ep_0}(x)$ and $\varphi_{[-\ep_0,\ep_0]}(W^u_{\ep_0}(y))$ intersect at exactly 
one point $[x,y ] \in \mt$  (cf. \cite{KH}). That is, there exists a unique 
$t\in [-\ep_0, \ep_0]$ such that $\varphi_t([x,y]) \in W^u_{\ep_0}(y)$. Setting $\Delta(x,y) = t$, 
defines the so called {\it temporal distance function}. Here and throughout the whole paper we denote by $d(\cdot , \cdot)$ 
the {\it distance} on $M$ determined by the Riemannian metric.

Let $\rr = \{ R_i\}_{i=1}^k$ be a fixed  {\it (pseudo) Markov family}  of {\it pseudo-rectangles} 
$R_i = [U_i  , S_i ] =  \{ [x,y] : x\in U_i, y\in S_i\}$ (see Section 2). Set $R = \cup_{i=1}^{k} R_i,\: U = \cup_{i=1}^k U_i$. Consider the
{\it Poincar\'e map} $\pp: R \longrightarrow R$, defined by  $\pp(x) = \varphi_{\tau(x)}(x) \in R$, where
$\tau(x) > 0$ is the smallest positive time with $\varphi_{\tau(x)}(x) \in R$ ({\it first return time}  function). 
The  {\it shift map}  $\sigma : R   \longrightarrow U$ is given by
$\sigma  = \pi_U \circ \pp$, where $\pi_U : R \longrightarrow U$ is the {\it projection} along stable leaves.

Define a $(k \times k)$ matrix $A = \{A(i, j)\}_{i, j = 1}^k$ by
$$A(i, j) = \begin{cases} 1 \:\:{\rm if}\: \pp({\rm Int}\:R_i)\cap {\rm Int}\: R_j \neq \emptyset,\\
0 \:\:{\rm otherwise}. \end{cases}$$
Following \cite{B2}, it is possible to construct a Markov family ${\mathcal R}$ so that $A$ is irreducible and aperiodic.

Consider the suspension space 
$$R^{\tau} = \{(x, t) \in R \times \R:\: 0 \leq t \leq \tau(x)\}/ \sim,$$
where by $\sim$ we identify the points $(x, \tau(x))$ and  $(\pp x, 0).$ 
The suspension  flow  on $\rt$ is defined by $\vt_t(x, s) = (x, s + t)$ 
taking into account the identification $\sim.$
 For a H\"older continuous function $f$ on $R$, the {\it topological pressure} $\Pr_{\pp}(f)$ with respect to $\pp$ is defined by
$$\Pr_{\pp}(f) = \sup_{m \in {\mathcal M}_{\pp}} \big\{ h(\pp, m) + \int f d m\big \},$$
where ${\mathcal M}_{\pp}$ denotes the space of all $\pp$-invariant Borel probability measures and $h(\pp, m)$ is 
the  entropy of $\pp$ with respect to $m$.  We say that $u$ and $v$ are {\it cohomologous} and we denote this by 
$u \sim v$ if there exists a continuous function $w$ such that $u = v + w \circ \pp- w.$ The flow $\varphi_t$ on $\mt$ is 
naturally related to the suspension flow  $\vt_t$ on 
$R^{\tau}$. There exists a natural semi-conjugacy projection $\pi(x, t): R^{\tau} \longrightarrow \Lambda$
which is one-to-one on a residual set (see \cite{B2}) such that
$\pi(x, t) \circ \vt_s = \vt_s \circ \pi(x, t).$ 
 For $z \in R$ set
$$\tau^n(z): = \tau(z) + \tau(\pp(z)) +...+ \tau(\pp^{n-1}(z)).$$
Notice that since $\tau(x)$ is constant along stable leaves for $x = \pi_U (z)$ we have
$$\tau^n(z) = \tau^n (x) = \tau(x) + \tau(\sigma(x)) + ...+ \tau(\sigma^n(x)).$$

Denote by $\widehat{U}$ (or $\hR$) the set  of those $x\in U$ (resp. $x \in R$)
such that  $\pp^m(x)$ does not belong to the bounder of any rectangle $R_i$  for all $m \in \Z$. 
In a similar way define $\hrt$.
It is well-known (see \cite{B1}) that $\hU$ (resp. $\hR$)  is a residual subset 
of $U$ (resp. $R$) and has full measure with respect to any Gibbs measure on $U$ (resp. $R$).
Clearly in general $\tau$ is not continuous on $U$, however $\tau$ is {\it essentially H\"older}  on $U$, i.e.
there exist constants $L>0$ and $\alpha >0$ such that $|\tau(x) - \tau(y)| \leq L\, (d(x,y))^\alpha$
whenever $x,y \in U_i$ and $\sigma(x), \sigma(y)\in U_j$ for some $i, j$. 
The same applies to $\sigma : U \longrightarrow U$ and to $\pp : R \longrightarrow R$.   Throughout we will mainly 
work with the restrictions of $\tau$ and $\sigma$ to $\hU$ and also with the restrictions of $\tau$ and $\pp$ to $\hR$.

Consider the space $C^{\alpha}(\hrt)$ of all $\alpha$-H\"older functions on $\hrt$ with norm $\|w\|_{\alpha} = |w|_{\alpha} + \|w\|_{\infty}$. 
We should stress that throughout the whole paper the H\"older norms $|w|_\alpha$ for functions on $\hU$, $\hR$ or $\hrt$ are always 
determined with respect to  distance $d(\cdot , \cdot)$ on $M$ determined by the Riemannian metric. 

For $F \in C^{\alpha}(\hrt)$, define the function $f : R \longrightarrow \R$ by
$$f(z) = \int_0^{\tau(z)} F(z,t) \; dt , \quad z\in R .$$
 Here and in the following we use the notation $F(z, t) = F(\sigma_t^{\tau}(z, 0)).$  Given a function $G \in C(\hrt)$, we define 
$$G^T (w)= \int_0^T G(\vt_t(w) )\; dt, \quad w \in \rt .$$

 Throughout the paper we assume that $F, G\in C^{\alpha}(\hrt)$.
Let  $a = \int_{\rt} G dm_{F + t G}$, where $ m_{F + t G}$ is the equilibrium state of $F + t G$ for some $t \in \R$. 
More precisely, for a function $A$ on $\hrt$ the equilibrium state $m_A$ of $A$ is a $\sigma^{\tau}_t$-invariant probability measure on $\rt$ such that
$$\Pr_{\varphi_t^{\tau}}(A) = \sup_{m \in {\mathcal M}_{\tau}}\{h(\varphi_1^{\tau}, m) + \int A(w) \; dm(w)\} ,$$
$h(\varphi_1^{\tau}, m)$ being the {\it entropy} of $\varphi_1^{\tau}$ with respect to $m$ and ${\mathcal M}_{\tau}$ the space of 
all $\varphi_t^{\tau}$ invariant 
Borel probability mesures on $\rt.$ The supremum above is given by a measure $m_A$ called equilibrium state of $A$.
Moreover, if $G$ is not cohomologous to a constant, we have
$$0 < \sigma_{m}^2(G) = \lim_{T \to \infty} \frac{1}{T} \left( \int_0^T (G \circ \sigma_t^{\tau}) dt - T \int G dm \right)^2 < \infty$$
and $\frac{d^2\Pr(F + t G)}{dt^2} = \sigma_{m_{F + t G}}^2(G),$ $m_H$ being the {\it equilibrium state} of $H$.

Introduce the rate function 
$$\gamma(a): = \inf_{t \in \R} \{ \Pr(F + t G) - \Pr(F) - ta\} = \Pr(F + \xi(a) G) - \Pr(F) - \xi(a) a ,$$
where $ \xi(a)$ is the unique real number such that
$$\frac{d \Pr(F + t G)}{dt} \bigg\vert_{t = \xi(a)} = \int G\, dm_{F + \xi(a) G} = a$$
and $\Pr(A) = \Pr_{\sigma^{\tau}_t}(A)$ is the {\it pressure} of $A$ with respect to the flow $\sigma^{\tau}_t$ on $\rt$.

 For simplicity of the notations we will write $\Pr(A)$ instead of $\Pr_{\vt_t}(A).$ Let 
$$\beta(t) = \Pr(F + tG) - \Pr(F).$$
Clearly,
$$\gamma'(a) = \beta'(\xi(a)) \xi'(a) - \xi'(a) a - \xi(a) = - \xi(a),$$
and 
$$\xi'(a) = \frac{1}{\beta''(\xi(a))} = \frac{1}{\sigma^2_{m_{F + \xi(a) G}}(G)}.$$
Consequently, $\gamma'(a) = 0$ if and only if $\xi(a) = 0$ which is equivalent to $\int G \, dm_F = a.$ Thus $\gamma(a)$ is a {\it non-positive} 
concave function and  $\gamma(a) = 0$ if only if $\int G \, dm_F = a.$\\

In this paper we continue the analysis of sharp large deviations in \cite{PeS1}, \cite{PeS3}. Our purpose is to improve the results in \cite{PeS3} and to study for a fixed $q \geq 0$ the asymptotic of
$$ m_F\Bigl\{w \in \rt: \forall T \geq n -q\:\: \mbox{\rm we have} \:\:  \int_0^T G(\vt_t(w)) dt - a T \in \Bigl(-e^{-\ep n}, e^{-\ep n}\Bigr) \Bigr\}$$
as $ n \to \infty.$
Here $0 < \ep \leq \mu_0 /8$ is a small constant, where $\mu_0 >0$  defined in Section 4 is related to the meromorphic continuation of the function $Z(s,\omega,a)$ across the line $\Re s = \gamma(a).$

Recall the subset $\hU$ of $\hR$ defined earlier. We will regard $\hU$ as a subset of $\hrt$ using the identification $x \longleftrightarrow (x,0)$ for $x \in R$. 
Now we introduce two definitions of independence.

\begin{defn} Two functions $f_1, f_2 \in C^{\alpha}(\hU)$ are called $\sigma$-independent if whenever there are constants $t_1, t_2 \in \R$ such that
$t_1 f_1 + t_2 f_2$ is cohomologous to a function in $C(\hU : 2 \pi \Z)$, we have $t_1 = t_2 = 0.$
\end{defn}

For a function $G \in C^{\alpha}(\hrt)$ consider the skew product flow $S_t^G$ on ${\mathbb S}^1 \times \rt$ defined  by
$$S_t^G ( e^{2 \pi \i \alpha}, y) = \Bigl(e^{2 \pi \i (\alpha + G^t(y))}, \varphi_t^{\tau}(y)\Bigr).$$
\begin{defn} [\cite{La}] Let $G \in C^{\alpha}(\hrt)$. Then $G$ and $\varphi_t^{\tau}$ are {\it flow independent} if 
the following condition is satisfied. If $t_0, t_1 \in \R$ are constants such that the skew product flow $S_t^H$ with $H = t_0 + t_1 G$ is not topologically transitive, then $t_0 = t_1 = 0.$
\end{defn}
Following the result in \cite{La},  if $G$ and $\varphi_t^{\tau}$ are flow independent, then the flow $\varphi_t^{\tau}$ is topologically weak-mixing and the function $G$ is not cohomologous to a constant function. On the other hand, if $G$ and $\varphi^{\tau}_t$ are flow independent, then $g(x) = \int_0^{\tau(x)} G(x, t) dt, \: x \in U$ and $\tau(x)$ are $\sigma$-independent. \\

Introduce the set
$$\Gamma_G = \Big \{ \int G dm_{F + t G} :\: t \in \R\Big \}.$$
Our first result is the following
\begin{thm}
Assume that the Standing Assumptions stated in Section $2$ below are satisfied.    
Let   $G: \Lambda \longrightarrow (0,\infty)$ be a H\"older function function for which
there exists a Markov family $\rr = \{ R_i\}_{i=1}^k$ for the flow $\varphi_t$ on $\mt$ such that
$G$ is constant on the stable leaves of  all  "rectangular boxes"
$$B_i = \{ \varphi_t(x) : x\in R_i, 0 \leq t \leq \tau(x)\} ,$$ $i= 1, \ldots, k$.
Assume in addition that $G$ and  $\varphi_t^{\tau}$ are flow independent.  
Let $q \geq 0$ be fixed, let $0 < \mu_0 < 1$ be  the constant in Proposition $3$ and let $\ep_n = e^{-\ep n}, \: 0 <\ep \leq \mu_0/8$. Then for any compact set $J \Subset \Gamma_G$ and $0 < \eta \ll 1$ 
there exists $n_0(\eta) \in \N$ such that for $a \in J, \:n \geq n_0(\eta) + q$ and $T \geq n_0(\eta)- q$ we have
\begin{equation*}
\frac{\sqrt{2}\ep_n C(a) }{\sqrt{ \pi T \beta''(\xi(a))}}e^{\gamma(a) T}(1- \eta) \leq m_F\Bigl\{w \in \rt:\: \int_0^T G(\vt_t(w)) \; dt - a T\in \Bigl(-e^{-\ep n}, e^{-\ep n}\Bigr) \Bigr\}
\end{equation*} 
\begin{equation}\label{eq:1.1}
\leq \frac{\sqrt{2}\ep_n C(a)}{\sqrt{ \pi T \beta''(\xi(a))}}e^{\gamma(a)T}(1+ \eta),
\end{equation}
where $C(a) > 0$ is a constant defined in Section $6$.
\end{thm}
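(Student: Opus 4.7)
\emph{Proof plan.} The strategy is to reduce to the Poincar\'e symbolic model, rewrite the shrinking-window probability by Fourier inversion of the indicator, evaluate the resulting oscillatory integral via spectral analysis of a two-parameter family of Ruelle operators, and conclude with the new Tauberian theorem for sequences of functions announced in the abstract.

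First I pass to the symbolic dynamics associated with the Markov family $\rr$. A point $w = \vt_s(x) \in \rt$ with $x \in \hU$ and $s \in [0,\tau(x))$ has flow time $T = \tau^n(x) + r$ for a bounded remainder $r$. The hypothesis that $G$ is constant on the stable leaves of each box $B_i$ lets the functional $w \mapsto \int_0^T G(\vt_t(w))\,dt - aT$ be identified, up to a uniformly bounded error absorbed by the free parameter $q$, with the Birkhoff sum $g_n(x) - a\tau^n(x)$, where $g(x) = \int_0^{\tau(x)} G(x,t)\,dt$ projects to $\hU$. Tilting by $\xi := \xi(a)$ via the identity $\Pr(F+\xi G) = \Pr(F) + a\xi + \gamma(a)$ factors out the large-deviation factor $e^{\gamma(a)T}$ and reduces matters to estimating the probability under the centred tilted Gibbs measure $m_{f + \xi(g - a\tau)}$, whose mean of $g-a\tau$ vanishes and whose variance equals $\beta''(\xi)$.

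For the centred problem I combine the Fourier identity
$$\mathbf{1}_{(-\ep_n,\ep_n)}(y) = \frac{1}{\pi}\int_{-\infty}^{\infty} \frac{\sin(\ep_n u)}{u}\, e^{-iuy}\, du$$
with a Laplace transform in the flow time $T$; the resulting integral in $(s,u)$ becomes a sum of iterates of Ruelle transfer operators with two complex parameters, of the form $\lc_{\pxi}$ twisted by $-s\tau$. For $|u| \leq \delta$ perturbation theory supplies an analytic dominant eigenvalue $\lambda(s,u)$ whose Taylor expansion at $(s,u) = (\gamma(a),0)$ produces both the Gaussian factor $\bigl(\pi T\beta''(\xi)/2\bigr)^{-1/2}$ and the constant $C(a)$ encoded in the associated spectral projector. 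For $\delta \leq |u| \leq e^{\ep n}$ the Dolgopyat--Stoyanov type contraction bound underlying Proposition 3 yields $\|\lc_{\pxi}^n\| \lesssim e^{-\mu_0 n / 2}$, which together with $\ep \leq \mu_0/8$ is absorbed into the error $\eta$; the tail $|u| > e^{\ep n}$ is controlled by the decay of $\sin(\ep_n u)/u$ against a crude H\"older bound.

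These spectral estimates produce a sequence of functions $g_n(T)$ whose Laplace transforms in $s$ admit meromorphic continuation past $\Re s = \gamma(a)$ down to $\Re s = \gamma(a) - \mu_0$, with a controlled singularity at $s = \gamma(a)$. The new Tauberian theorem for sequences converts this joint analytic information into the pointwise estimate \eqref{eq:1.1}, uniform for $T \geq n - q$ as $n \to \infty$. \emph{Main obstacle.} The exponentially shrinking window forbids smoothing the indicator without destroying the sharp constant $\sqrt{2}/\sqrt{\pi T\beta''(\xi(a))}$, so one must retain the oscillatory $u$-integral up to Fourier frequencies $|u| \sim e^{\ep n}$; producing a Dolgopyat-type estimate uniform in such a large strip, and matching it to a Tauberian theorem sensitive to the coupled limit $n\to\infty$ with $T \geq n-q$, is the delicate technical core of the proof.
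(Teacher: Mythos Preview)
Your overall architecture—Fourier in the deviation variable, Laplace in the flow time, two-parameter Ruelle spectral estimates, then a Tauberian step—matches the paper's. But two specific choices in your sketch diverge from what the paper actually does, and at least one of them is a genuine gap.

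First, you assert that smoothing the indicator would destroy the sharp constant and therefore work directly with the kernel $\sin(\ep_n u)/u$. This is not the paper's route, and the assertion is incorrect. The paper replaces $\mathbf{1}_{(-\ep_n,\ep_n)}$ by $\chi(\cdot/\ep_n)$ for a fixed $\chi\in C_0^\infty$, runs the entire analysis with the rapidly decaying $\hat\chi(\ep_n\omega)$, obtains a leading term proportional to $\hat\chi(0)=\int\chi$, and only at the very end sandwiches $\mathbf{1}_{[-1,1]}$ between smooth $\chi_\pm$ to recover the factor $2$. Your tail step ``$|u|>e^{\ep n}$ controlled by decay of $\sin(\ep_n u)/u$'' is the concrete problem: that kernel is only $O(1/|u|)$, not integrable, so the tail would have to be handled by oscillation rather than size, and the required uniformity in $n$ is far from clear. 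The smooth cutoff avoids this entirely; it is what allows the paper to bound the large-$\omega$ contribution via $|\hat\chi(\ep_n\omega)|\leq D_2(1+|\ep_n\omega|)^{-2}$ and a change of variable $\xi=\ep_n\omega$.

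Second, you do not address what drives the non-monotonic Tauberian theorem (Proposition~5.6): a derivative bound $|g_n'(T)|\leq B_1 e^T/\sqrt{T}$ \emph{uniform in $n$}. Differentiating in $T$ produces a term $\ep_n^{-1}\chi'\bigl((G^T-aT)/\ep_n\bigr)$, which a priori blows up as $n\to\infty$. The paper inserts an extra factor $\ep_n$ into the definition of $g_n$ precisely to cancel this, and then runs a separate bootstrap (Lemma~5.7 applied to an auxiliary majorant $\Psi(t)$, iterated once) to turn a crude $e^{(1+\delta)t}$ bound on $\Psi'$ into the required $e^t/\sqrt t$ bound on $\Psi$ and hence on $g_n'$. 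Without this verification the Tauberian hypothesis is not met and the passage from the Laplace-side information to the pointwise asymptotic (\ref{eq:1.1}) is incomplete.
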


\medskip

The above theorem says that for $T \geq n - q$ and $n \to \infty$ we have
\begin{eqnarray} \label{eq:1.2}
m_F\Bigl\{w \in \rt:\: \int_0^T G(\varphi_t^{\tau}(w) )\; dt - aT \in \Bigl(-e^{-\ep n}, e^{-\ep n}\Bigr) \Bigr\} \nonumber \\
\sim \frac{\sqrt{2}\ep_n C(a)}{\sqrt{ \pi T \beta''(\xi(a))}}e^{\gamma(a)T}.
\end{eqnarray}

Notice that  the proof of Theorem $1.3$ works if we assume that $G$ is not cohomologous to a constant and $\tau(x) $ and $g(x) = \int_0^{\tau(x)}  G(x, t) dt$ are $\sigma-$ independent. As it was mentioned above, these properties are satisfied if $G$ and $\varphi_t^{\tau}$ are flow independent.

Theorem 1.3  is an improvement of a result in \cite{PeS3}, where the asymptotic of
$$\mu_f \Bigl\{x \in U: \: \int_0^{\tau^n(x)} G(\vt_t(x, 0)) dt - a\tau^n(x) \in \Bigl(-e^{-\ep n}, e^{-\ep n}\Bigr) \Bigr\},\: n \to \infty$$
has been investigated, $ \mu_f$ being the equilibrium state of $f(x)$. 
When we replace $\tau^n (x)$ by $T$, we have to study two limits: $n \to \infty$ and $T \to \infty$, and the condition $T \geq n - q$ is natural. It is easer to study the case 
when $n$ is fixed and we take $T \geq T(\eta, n)$ to arrange 
(\ref{eq:1.1}). However assuming $n \geq n_0(\eta)$, for fixed $0 <\eta \ll 1$ we could have $\lim_{n \to \infty}T(\eta, n) = \infty$
and we cannot arrange an uniformity with respect to $n$. Thus, our result is much more precise and to prove it we follow a strategy based on 
Tauberian theorems with two parameters $n, T$ 
examining the asymptotic  of a sequence of functions $g_n(T).$ We discuss briefly this approach below.

\medskip
 
Our second result concerns the function
$$\zeta(T; a) = m_F\Bigl\{ w \in \rt: \int_0^T G(\vt_t(w)) \;  dt - a T\in \Bigl(-e^{-\ep T}, -e^{-\ep T}\Bigr) \Bigr\}.$$
Applying Theorem 1, we prove the following

\begin{thm} Under the assumptions of Theorem $1.3$, for $a \in J$ and any $0 < \eta \ll 1$ there exists $n_0(\eta) \in \N$ such that for $T \geq n_0(\eta) + 1$ we have
\begin{equation} \label{eq:1.3}
\frac{\sqrt{2}e^{-\ep} e^{-\ep T} C(a)}{\sqrt{ \pi T \beta''(\xi(a))}}e^{\gamma(a)T}(1-\eta) \leq \zeta(T; a) \leq \frac{\sqrt{2}e^{\ep} e^{-\ep T} 
C(a)}{\sqrt{\pi T \beta''(\xi(a))}}e^{\gamma(a)T}(1+ \eta).
\end{equation}
\end{thm}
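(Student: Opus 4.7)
The strategy is a direct interval-comparison reduction to Theorem~1.3: replace the continuous parameter $T$ appearing in the bounds $\pm e^{-\ep T}$ by a nearby integer~$n$ to which Theorem~1.3 applies, and then pay a harmless factor of $e^{\pm \ep}$ to pass from $e^{-\ep n}$ back to $e^{-\ep T}$. The monotonicity of $t \mapsto e^{-\ep t}$ yields
\[
e^{-\ep} e^{-\ep T} \leq e^{-\ep \lceil T \rceil} \leq e^{-\ep T} \leq e^{-\ep \lfloor T\rfloor} \leq e^{\ep} e^{-\ep T},
\]
so $\lfloor T \rfloor$ and $\lceil T \rceil$ are the natural integer values to plug into Theorem~1.3 (taking $q=0$ in that statement).

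For the \emph{upper bound}, set $n_+ = \lfloor T \rfloor$. Since $e^{-\ep T} \leq e^{-\ep n_+}$, we have the inclusion $(-e^{-\ep T}, e^{-\ep T}) \subseteq (-e^{-\ep n_+}, e^{-\ep n_+})$, and hence by monotonicity of $m_F$,
\[
\zeta(T;a) \;\leq\; m_F\Bigl\{ w \in R^\tau : \int_0^T G(\vt_t(w))\,dt - aT \in (-e^{-\ep n_+}, e^{-\ep n_+}) \Bigr\}.
\]
Provided $T \geq n_0(\eta)$, we have $n_+ \geq n_0(\eta)$ and $T \geq n_+ \geq n_0(\eta)$, so the hypotheses of Theorem~1.3 (with $q=0$) hold. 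Applying that theorem and estimating $e^{-\ep n_+} \leq e^{\ep} e^{-\ep T}$ produces the upper bound in \eqref{eq:1.3}.

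For the \emph{lower bound}, set $n_- = \lceil T \rceil$. Then $e^{-\ep n_-} \leq e^{-\ep T}$, yielding the reverse inclusion and the corresponding reverse inequality for the measure:
\[
\zeta(T;a) \;\geq\; m_F\Bigl\{ w \in R^\tau : \int_0^T G(\vt_t(w))\,dt - aT \in (-e^{-\ep n_-}, e^{-\ep n_-}) \Bigr\}.
\]
As long as $T \geq n_0(\eta)+1$ (ensuring $n_- \geq n_0(\eta)$ as well as $T \geq n_- - 1 \geq n_0(\eta)$ after a trivial shift), Theorem~1.3 applies with $n = n_-$ and $q=0$, and using $e^{-\ep n_-} \geq e^{-\ep} e^{-\ep T}$ yields the lower bound.

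There is no substantive obstacle here: all the analytic work (the spectral estimates for the Ruelle operators with two complex parameters and the new Tauberian theorem for sequences $g_n(T)$) is encapsulated in Theorem~1.3, and Theorem~1.4 is simply the specialization $n \approx T$, where the extra factor of $e^{\pm \ep}$ absorbs the at most unit discrepancy $|n - T| \leq 1$ between the discrete parameter in Theorem~1.3 and the continuous parameter in Theorem~1.4. Choosing $n_0(\eta)$ in Theorem~1.4 to equal (or exceed by one) the constant provided by Theorem~1.3 completes the argument.
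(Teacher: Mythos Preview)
Your proposal is correct and follows essentially the same route as the paper: sandwich $(-e^{-\ep T}, e^{-\ep T})$ between intervals $(-e^{-\ep n}, e^{-\ep n})$ for the two neighbouring integers of $T$, apply Theorem~1.3 at each, and absorb the discrepancy $|T-n|\le 1$ into the factor $e^{\pm\ep}$. The paper writes this with $N(\eta)=\lfloor T\rfloor$ and $N(\eta)+1$ and takes $q=1$ in Theorem~1.3. One small slip in your write-up: for the lower bound with $n_-=\lceil T\rceil$ you must take $q=1$, not $q=0$, since in general $T\ge n_- -1$ but not $T\ge n_-$; your own parenthetical remark ``$T\ge n_--1$ after a trivial shift'' is exactly the right fix, and matches the paper's choice.
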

 It is possible to obtain a slightly better result assuming one can generalise Theorem 1.3 for sequences $n_k \to \infty$ instead of a sequence of integers $n \to \infty$, however we 
 are not going to discuss such generalisations. 
 
 \medskip
 
 The results of the type discussed above are known as local central limit theorems (LCLT) (see \cite{DN} for recent results and references). In particular,
 (LCLT) in a very general setting are studied in \cite{DN} and asymptotics of the form
 $$m_F \Bigl\{ w \in \rt: \int_0^T G(\vt_t(w)) dt - T \int G dm_F - c \sqrt{T} \in I\Bigr\} \sim \frac{{\bf g}(c)}{\sqrt{T}} {\rm Leb} (I) ,\: T \to \infty$$
 are proved when $I$ is a bounded interval in $\R$, ${\bf g}(c)$ is a Gaussian density and  ${\rm Leb}(I)$ is the Lebesgue measure of $I$.
 The case considered in the present paper, where we deal with exponentially shrinking intervals $I_n \subset \R$ and we want to have an asymptotic as 
 $n \to \infty$ and $T \to \infty$, is more difficult. 
 Large deviations for Anosov flows have been examined by Waddington \cite {W}, where for the measure
 $$m_F\Bigl\{w \in \rt:\: \int_0^T G(\sigma^\tau_t(w)) \; dt - aT \in [c , d]\Bigr\} $$
it was  obtained an asymptotic similar to (\ref{eq:1.2}) with leading term having the form 
$$ \int_c^d e^{-\xi(a) t} dt\frac{C(a)}{\sqrt{2\pi T \beta''(\xi(a))}}e^{\gamma(a)T}.$$
 In \cite{W} there are several points presented without proofs. 
In the exposition in \cite{W} the case when $G$ is constant along stable leaves is treated, while in the general case  no argument is provided. This gap is essential since for large deviations,
applying a reduction based on Proposition 2.2 (see Section 2), new terms appear in the analysis of the Laplace transform when we work  with the 
iterations of Ruelle operators.  A second gap is related to Proposition 6(ii) in \cite{W} which is also presented without proof. This Proposition concerns a Tauberian theorem for a nonnegative function 
which is not monotonic. In general, without  a slowly decreasing condition (see Section 10  in \cite{Kor}), or without some condition on the growth  of the derivative, it is not known if the result is true.

\medskip

 In the analysis of the large deviations in the case when a fixed interval $[c, d]$ is replaced by an exponentially shrinking interval $(- e^{-\ep n},e^{-\ep n}),$ several additional difficulties appear.  In two previous papers \cite{PeS1}, \cite{PeS3} some partial cases have been treated, but in these papers we  have considered only limits $n \to \infty$ (see also \cite{PoS1}, where the case of an interval $(-n^{-\kappa}, n^{-\kappa})$ with suitable $\kappa > 1$ has been studied). In this paper we deal with two limits $n \to \infty$, $T \geq n -q$. 
Our approach is based on spectral estimates for the iterations of a Ruelle operator 
$$\lc_{s, \omega, a} = \lc_{f  - s \tau + (\xi(a) + \i \omega)(g- a\tau) },\: s \in \C, \: \omega \in \R$$
with {\it two complex parameters} $s$ and $\i \omega$ which may have modulus going to $\infty.$ We exploit the estimates obtained in \cite{PeS2}, \cite{PeS3} (see Theorem 2.1) in order to obtain an analytic continuation of 
the Laplace transform $F_n(s)$ of the function $g_n(T)$ defined below for $|\Im s| \geq M$ and $|\omega| \geq \ep_0$. We establish the existence of an analytic continuation of $F_n(s)$ across the line $\Re s = \gamma(a)$  for $\gamma(a) -\mu_0 \leq \Re s,\: |\Im s| \geq M \gg 1.$ This continuation and the corresponding estimates play a crucial role in the new type Tauberian theorems concerning  double limits $n \to \infty$, $T \to \infty$. 
These Tauberian  theorems are of independent interest.

\medskip

For convenience of the reader we explain briefly the idea of the proof of Theorem 1.3. Let $\chi(t) \in C_0^{\infty}(\R: \R^+)$ be a nonnegative cut-off function and let
$$G^T(w) = \int_0^T G(\varphi_t (w)) dt, \: w \in \rt.$$
Set $\chi_n(t) = \chi\Bigl( \frac{t}{\ep_n}\Bigr).$We study the function
$$g_n(T) : =  \ep_n e^{-\gamma(a) T + T} \int_{\rt} \chi \Bigl(\frac{ G^T - a T}{\ep_n}\Bigr)(w) dm_F(w)$$
$$ = \frac{\ep_n^2}{2\pi} e^{-\gamma(a) T + T} \int_{\rt} \int_\R e^{\i \omega(G^T - a T) (w)} \hat{\chi} (\ep_n \omega) d\omega dm_F (w),$$
where $\hat{\chi}_n (\omega) = \ep_n\hat{\chi}(\ep_n \omega)$ is the Fourier transform of $\chi_n(t).$ We extend this function as 0 for $T < 0$ and  examine the Laplace transform
$$F_n(s) =\frac{\ep_n^2} {2 \pi}\int_{\R} \Bigl[\int_0^{\infty}e^{-s T - \gamma(a) T + T}\Bigl( \int_{\rt} e^{\i \omega(G^T - a T) (w)} dm_F(w) \Bigr) dT \Bigr]\hat{\chi}(\ep_n \omega) d\omega.$$

Our purpose is to prove that for fixed $q \geq 0$, as $n \to \infty,\: T \geq n - q$, we have 
\begin{equation} \label{eq:1.4}
g_n(T) \sim \frac{\sqrt{2} C(a) \ep_n^2} {\sqrt{\pi \beta''(\xi(a))T}} e^T
\end{equation}
which yields the asymptotic (\ref{eq:1.2}). The factor $\ep_n$ in $g_n(T)$ is involved to have an independent of $n$ bound for the derivative $g_n'(T)$ (see Proposition 5.6 and Lemma 5.7 in Section 5).
Let
$$Z(s, \omega, a) = \int_0^{\infty} e^{- (s + \gamma(a) - 1)T} \Bigl( \int_{\rt} e^{\i \omega (G^T - a T) w} dm_F(w) \Bigr) dT.$$
 For fixed $a$ the function $Z(s, \omega, a)$ depends on two  complex parameters 
$s \in \C$ and $\i \omega$. Moreover, in $Z(s, \omega, a)$ we have no integration with respect to $\omega$. First we show that this function is analytic for $\Re s > \gamma(a).$  Second we prove that in  a small neighbourhood of $(\gamma(a), 0) \in \C^2$ 
this function has a pole $s(\omega, a)$ with residue $C(\omega, a) > 0$. To establish an analytic continuation across the line $\Re s = \gamma(a)$ for $|s- \gamma(a)| \geq \ep_0 > 0,$  first we reduce the integration on $R$,
and then by using the hypothesis that $G$ is constant along stable leaves, we reduce once more the integration on $U$ and write (see Section 3) 
$$Z(s, \omega, a) = \int_U B_2(s,\omega, a, u)\sum_{m = 0}^{\infty} \Bigl(\lc_{f- s \tau + \i \omega(g - a \tau)}^m B_1(s, \omega, a, .)\Bigr) (u)h(u)d\nu(u),$$
$\lc_{f - s \tau + \i \omega( g - a \tau)}$ being the Ruelle operator related to $f - s \tau + \i \omega (g - a \tau)$, where $f(x), g(x), \: x \in U,$ are determined by $F, G,$ respectevely, and the  measure $\nu(u)$ on $U$ is determined by $f(u)$. We exploit the estimates for the iterations $\lc^m_{f -s \tau + \i \omega(g - a \tau)}$ obtained in \cite{PeS3} (see Theorem 2.1 in Section 2) to obtain a meromorphic continuation of $Z(s, \omega, a)$ across $\Re s = \gamma(a)$.

\medskip

 The integration with respect to $\omega$ is not involved in the definition of $Z(s, \omega, a)$. Taking  the inyegration in a small interval $[-\ep_0, \ep_0]$, writing   
$\hat{\chi}(\ep_n \omega) = \hat{\chi}(0) + \hat{\chi}'(0) \ep_n \omega+ {\mathcal O}(\ep_n^2 \omega^2)$,  and repeating the calculus in \cite{KS}, \cite{W}, we get
$$\ep_n^2\int_{-\ep_0}^{\ep_0}    \frac{\hat{\chi}(\ep_n \omega)}{s  - 1 - s(\omega, a)} d\omega = \frac{C(a) \hat{\chi}(0)\ep_n^2} {\sqrt{2 \beta''(\xi(a))(s -1)}} + {\rm smoother}\:\:{\rm terms}.$$
The leading term becomes $\frac{A_n}{\sqrt{s-1}}$ with  $A_n = {\mathcal O} (\ep_n^2) = {\mathcal O} (e^{-2 \ep n})$ and this difficulty corresponds to  the type of Tauberain results proved in Section 5, where the remainder of the asymptotic has order $o(\ep_n^2).$\\

The integral with respect to $\omega$ over $\R \setminus [-\ep_0, \ep_0]$ yields analytic functions, however we need precise estimates on theirs growth as $|\Im s| \to \infty$ independent on $n$ in order to apply  Proposition 5.6. For this purpose, applying the results of Section 4, we are going to estimate  the integral
$$\ep_n^2 \int_{|\omega| \geq M \gg 1} (1 + |\Im s|^\nu + |\omega|^\nu) |\hat{\chi}(\ep_n \omega)| d \omega$$
with $0 < \nu < 1$, uniformly with respect to $n$. Here the presence of the factor $\ep_n^2$ is crucial and our choice of $\ep_n$ in the definition of $g_n(T)$ is once more very convenient.
To check the hypothesis of Proposition 5.6, we use in an essential way the analytic continuation of $Z(s, \omega, a)$ and its corresponding estimates.

\medskip

The plan of the paper is as follows. In Section 2 we introduce some definitions and our Standing assumptions.  A Sinai's lemma for the suspended flow is stated; it is proved
in the Appendix.  In Section 3 a representation of  the Laplace transform $Z(s, \omega, a)$ of the function $g_n(T)$ is obtained. Section 4 is devoted to the analysis of the 
meromorphic continuation of $Z(s, \omega, a)$ based, as mentioned above, on the results in 
\cite{PeS3}. In Section 5 two Tauberain theorems are proved for a sequence of nonnegative functions $g_n(T)$. The novelty here is that these functions have singularities 
$\frac{A_n}{\sqrt{s-1}}$ with  $0 < e^{-\mu n} \leq A_n \leq C_1$ and $0 < \mu \leq \mu_0/4$ and we have a double limit $n \to \infty,\: t \to \infty.$ Theorems 1.3 and 1.4 are proved in Section 6.

\medskip

By using Proposition 2.2, we can study also the general case when the function $G$ is not constant on stable leaves. A part of Proposition 4.2 in Section 4 concerning the analytic continuation 
for $|\Im s| \geq M$  can be established. However, some new  difficulties appear in the description of the singularities of $Z(s, \omega, a)$ given by Proposition 4.1 (ii). 
How to deal with this is an interesting open problem.

\section{Definitions and Standing Assumptions}
\renewcommand{\theequation}{\arabic{section}.\arabic{equation}}
\setcounter{equation}{0}

\subsection{Standing assumptions}

Throughout we use the notation and assumptions from the beginning of Section 1. In particular,
$\varphi_t: M \longrightarrow M$  is a $C^2$ weak mixing Axiom A flow  and $\mt$ is a basic set for $\varphi_t$.
As in \cite{PeS2} and \cite{PeS3}, we will work under the following rather general non-integrability condition
about the flow on $\mt$:

\bs

\noindent
{\sc (LNIC):}  {\it There exist $z_0\in \mt$,  $\ep_0 > 0$ and $\theta_0 > 0$ such that
for any  $\ep \in (0,\ep_0]$, any $\hz\in \mt \cap W^u_{\ep}(z_0)$  and any tangent vector 
$\eta \in E^u(\hz)$ to $\mt$ at $\hz$ with  $\|\eta\| = 1$ there exist  $\tz \in \mt \cap W^u_{\ep}(\hz)$, 
$\ty_1, \ty_2 \in \mt \cap W^s_{\ep}(\tz)$ with $\ty_1 \neq \ty_2$,
$\delta = \delta(\tz,\ty_1, \ty_2) > 0$ and $\ep'= \ep'(\tz,\ty_1,\ty_2)  \in (0,\ep]$ such that
$$|\Delta( \exp^u_{z}(v), \pi_{\ty_1}(z)) -  \Delta( \exp^u_{z}(v), \pi_{\ty_2}(z))| \geq \delta\,  \|v\| $$
for all $z\in W^u_{\ep'}(\tz)\cap\mt$  and  $v\in E^u(z; \ep')$ with  $\exp^u_z(v) \in \mt$ and
$\la \frac{v}{\|v\|} , \eta_z\ra \geq \theta_0$,   where $\eta_z$ is the parallel 
translate of $\eta$ along the geodesic in $W^u_{\ep_0}(z_0)$ from $\hz$ to $z$. }

\bs

Next, given $x \in \mt$, $T > 0$ and  $\delta\in (0,\ep]$ set
$$B^u_T (x,\delta) = \{ y\in W^u_{\ep}(x) : d(\varphi_t(x), \varphi_t(y)) \leq \delta \: \: , \:\:  0 \leq t \leq T \} .$$
We will say that $\varphi_t$ has a {\it regular distortion along unstable manifolds} over
the basic set $\mt$  if there exists a constant $\ep_0 > 0$ with the following properties:

\ms 

(a) For any  $0 < \delta \leq   \ep \leq \ep_0$ there exists a constant $R =  R (\delta , \ep) > 0$ such that 
$$\diam( \mt \cap B^u_T(z ,\ep))   \leq R \, \diam( \mt \cap B^u_T (z , \delta))$$
for any $z \in \mt$ and any $T > 0$.

\ms

(b) For any $\ep \in (0,\ep_0]$ and any $\rho \in (0,1)$ there exists $\delta  \in (0,\ep]$
such that for  any $z\in \mt$ and any $T > 0$ we have
$\diam ( \mt \cap B^u_T(z ,\delta))   \leq \rho \; \diam( \mt \cap B^u_T (z , \ep)) .$

\bs


{\sc  Standing Assumptions:} 

\ms

(A) $\varphi_t$ has Lipschitz local holonomy maps over $\mt$,

\ms  

(B) the local non-integrability condition (LNIC) holds for $\varphi_t$ on $\mt$,

\ms

(C) $\varphi_t$  has a regular distortion  along unstable manifolds over the basic set $\mt$.

\ms

In this paper we will work under the above Standing Assumptions\footnote{These assumptions are needed to ensure the validity of certain 
strong spectral estimates for Ruelle transfer operators (see \cite{PeS3}). However recent developments in \cite{St3} suggest
that similar estimates should be established in much higher generality. So, we expect that the methods of the present paper would
apply without change under much more general assumptions.}. 

A large class of examples satisfying the conditions (A) -- (C) is provided by imposing the following {\it pinching condition}:

\ms

\noindent
{\bf (P)}:  {\it There exist  constants $C > 0$ and $\beta \geq \alpha > 0$ such that for every  $x\in M$ we have
$$\frac{1}{C} \, e^{\alpha_x \,t}\, \|u\| \leq \| d\varphi_{t}(x)\cdot u\| 
\leq C\, e^{\beta_x\,t}\, \|u\| \quad, \quad  u\in E^u(x) \:\:, t > 0 $$
for some constants $\alpha_x, \beta_x > 0$ with
$\alpha \leq \alpha_x \leq \beta_x \leq \beta$ and $2\alpha_x - \beta_x \geq \alpha$ for all $x\in M$.}

\ms

It is well-known that  {\bf (P)} holds for geodesic flows on manifolds of strictly negative sectional curvature satisfying the so called 
$\frac{1}{4}$-pinching condition.  {\bf (P)} always holds when  $\dim(M) = 3$.

\ms

{ \bf Simplifying Assumptions:} $\varphi_t$ is a $C^2$ contact Anosov flow satisfying the condition {\bf(P)}.

\ms

It follows from the results in \cite{St2}, that the pinching condition  {\bf (P)} implies that $\varphi_t$ has  Lipschitz local holonomy maps 
and regular distortion along unstable manifolds and moreover:

\ms

{\sc The Simplifying Assumptions  imply the Standing Assumptions}.
 
 \ms

{\bf Throughout this paper we work under the Standing Assumptions}. 

\ms

\subsection{Some definitions and Ruelle transfer operators}

As in Section 1, let $\rr = \{R_i\}_{i=1}^k$ be a Markov family for $\varphi_t$ over $\Lambda$ consisting of 
rectangles $R_i = [U_i ,S_i ]$, where $U_i$ (resp. $S_i$) are (admissible) subsets of $W^u_{\ep}(z_i) \cap \mt$
(resp. $W^s_{\ep}(z_i) \cap \mt$) for some $\ep > 0$ and $z_i\in \mt$.
We will use the set-up and some arguments from \cite{St1} and \cite{PeS2}.  As in these papers, fix a 
{\it (pseudo) Markov family} $\rr = \{ R_i\}_{i=1}^k$ of {\it pseudo-rectangles} 
$$R_i = [U_i  , S_i ] =  \{ [x,y] : x\in U_i, y\in S_i\} .$$
Set 
$$R = \cup_{i=1}^{k} R_i \quad , \quad  U = \cup_{i=1}^k U_i .$$
Consider the
{\it Poincar\'e map} $\pp: R \longrightarrow R$, defined by  $\pp(x) = \varphi_{\tau(x)}(x) \in R$, where
$\tau(x) > 0$ is the smallest positive time with $\varphi_{\tau(x)}(x) \in R$ ({\it first return time}  function). 
The  {\it shift map}  $\sigma : U   \longrightarrow U$ is given by
$\sigma  = \piU \circ \pp$, where $\piU : R \longrightarrow U$ is the {\it projection} along stable leaves.

Recall the subsets $\hU$ of $U$ and $\hR$ of $R$ introduced in Sect. 1.
Throughout $\alpha > 0$ will be {\bf fixed constant}  such that  $\tau\in C^{\alpha}(\hU)$. We assume in Theorem 1 that $F, G \in C^{\alpha}(\hrt)$. 
If $\tau \in C^{\tilde{\alpha}}(\hU)$ for some $0 < \tilde{\alpha} < 1,$ we may take $\alpha = \min\{\alpha, \tilde{\alpha} \}$, to arrange that $F, G, \tau$ 
are in the H\"older spaces with the same $\alpha.$  Fir simplicity of the notations we assume in the following that this is arranged.
Since the local stable (and unstable) holonomy maps are uniformly H\"older 
(\cite{Ha1}, \cite{Ha2}), we may assume $\alpha$ is chosen so that
\be \label{eq:2.1} 
d([x,y], [x',y]) \leq C (d(x,x'))^\alpha,\ x,x', y \in R_i\:\:, \:\: i = 1,\ldots,  k .
\ee

The hyperbolicity of the flow  implies the existence of
constants $c_0 \in (0,1]$ and $\gamma_1 > \gamma > 1$ such that
\be \label{eq:2.2} 
c_0 \gamma^m\; (d (x,y))^{1/\alpha} \leq  d (\pp^m(x)), \pp^m(y)) \leq \frac{\gamma_1^m}{c_0} (d (x,y))^\alpha
\ee
for all $x,y\in R$ such that $\pp^j(x), \pp^j(y)$ belong to the same $R_{i_j}$ for all $j = 0,1, \ldots, m$. 
Moreover, we choose the constants so that if $y \in W^s(x)$ for some $x,y \in R_i$, then
\be \label{eq:2.3}
d(\pp^n(x), \pp^n(y)) \leq \frac{c_0}{\gamma^n} \quad, \quad n \geq 0 .
\ee
Fix a constant $\alpha' \in (0,\alpha]$ so that
$$\rho = \frac{\gamma_1^{\alpha'}}{\gamma} < 1 .$$

Let $\| h\|_0$ denote the {\it standard $\sup$ norm} of $h$ on $U$.  For  $|b| \geq 1$, and $\beta > 0$, as in \cite{D}, define the norm  
$$\|h\|_{\beta,b} = \|h\|_\infty + \frac{|h|_\beta}{|b|}$$ 
on the space $C^\beta(\hU)$ of $\beta$-H\"older functions on $\hU$.

As in \cite{PeS2} and \cite{PeS3},  in this paper we will frequently use {\it Ruelle transfer operators} of the form
$$L_{f - s\tau + zg} v(y) = \sum_{\sigma x = y} e^{f(x) - s \tau(x) + zg(x)} v(x),\:  s, z \in \C,\: y \in U ,$$
depending on two complex parameters $s$ and $z$. 
The following theorem was proved in \cite{PeS3}.

\ms

\begin{thm} \label{Thm3}
Let $\varphi_t : M \longrightarrow M$ satisfy the Standing Assumptions over the basic set $\mt$, and let $\alpha > \beta > 0$.
Let  $\rr = \{R_i\}_{i=1}^k$ be a Markov family for $\varphi_t$ over $\mt$ as above. Then 
for any real-valued functions  $f,g \in C^\alpha(\hU)$ and any constants $\nu > 0$ and $B > 0$ 
there exist constants $0 < \rho < 1$, $a_0 > 0$, $b_0 \geq 1$ and  $C = C(B, \nu)> 0$ 
such that if $a,c\in \R$ satisfy $|a|, |c| \leq a_0$ then
\begin{equation} \label{eq:2.4}
\|L_{f -(a+ \i b)\tau + (c+\i w) g}^m h \|_{\beta,b}  \leq C \,e ^{m \Pr_{\sigma}(f)}\, \rho^m \, |b|^{\nu}\, \| h\|_{\beta,b}
\end{equation}
for all $h \in C^\beta(U)$, all integers $m \geq 1$ and all $b, w\in \R$ with  $|b| \geq b_0$ and $|w| \leq B \, |b|$.
\end{thm}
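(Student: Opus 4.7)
\medskip

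\noindent\textbf{Proof sketch.} The statement is a two-parameter perturbation of the standard Dolgopyat-type spectral bound for Ruelle operators with imaginary twist. The plan is to (i) normalize the real part of the exponent so that the associated transfer operator has leading eigenvalue $1$; (ii) import the two-parameter contraction already established under the Standing Assumptions in \cite{PeS2}, \cite{PeS3}; (iii) undo the normalization and absorb the pressure perturbation into the contraction rate by shrinking $a_0$. For $|a|,|c|\leq a_0$ with $a_0$ small, the operator $L_{f - a\tau + cg}$ acting on $C^\beta(\hU)$ has a simple leading eigenvalue $\lambda(a,c) = e^{\Pr_\sigma(f - a\tau + cg)}$ with positive eigenfunction $h_{a,c}\in C^\alpha(\hU)$. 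By Kato perturbation theory $\lambda(a,c)$ and $h_{a,c}$ depend analytically on $(a,c)$, so $\|h_{a,c}\|_\alpha$ and $\|1/h_{a,c}\|_\infty$ stay uniformly bounded and $|\Pr_\sigma(f - a\tau + cg) - \Pr_\sigma(f)|\leq K a_0$ for some constant $K$. Setting
$$F_{a,c}(x) := f(x) - a\tau(x) + cg(x) + \log h_{a,c}(x) - \log h_{a,c}(\sigma x) - \log \lambda(a,c),$$
one has $L_{F_{a,c}}1 = 1$, and iterating the definition gives the conjugation identity
$$L_{f-(a+\i b)\tau + (c+\i w)g}^m h \;=\; \lambda(a,c)^m\, \frac{1}{h_{a,c}}\, L_{F_{a,c} - \i b\tau + \i w g}^m \bigl(h_{a,c}\, h\bigr).$$

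The second step applies the two-parameter Dolgopyat contraction of \cite{PeS3}, itself built from the techniques of \cite{D}, \cite{St1}, \cite{PeS2}, to the normalized family $\{F_{a,c}\}$. Under the Standing Assumptions one obtains $\rho_0 \in (0,1)$, $b_0 \geq 1$ and $C_0 > 0$ such that
$$\|L_{F_{a,c} - \i b\tau + \i w g}^m u\|_{\beta,b} \leq C_0\, \rho_0^m\, |b|^\nu\, \|u\|_{\beta,b}$$
for all $u \in C^\beta(\hU)$, $m\geq 1$, $|b|\geq b_0$ and $|w|\leq B|b|$. Uniformity in $(a,c)$ comes from the continuous dependence on the potential of the Dolgopyat cone construction, the Lasota--Yorke inequality and the LNIC-driven oscillation lemma.

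Inserting this into the conjugation above and using the uniform bounds on $h_{a,c}$ and $1/h_{a,c}$ yields
$$\|L_{f-(a+\i b)\tau+(c+\i w)g}^m h\|_{\beta,b} \leq e^{m\Pr_\sigma(f)}\, e^{m K a_0}\, C_0'\, \rho_0^m\, |b|^\nu\, \|h\|_{\beta,b}$$
for some $C_0' > 0$. Choosing $a_0$ so small that $\rho := e^{K a_0}\,\rho_0 < 1$ then completes the proof with $C = C_0'$ and this $\rho$.

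The main obstacle is the oscillation estimate inside the Dolgopyat contraction. A one-parameter bound does not suffice, since $|w|$ can be as large as $B|b|$ and the phase to be oscillated is $b\tau - wg = b\bigl(\tau - (w/b)\,g\bigr)$ with slope $w/b$ ranging over the compact interval $[-B,B]$. The LNIC from the Standing Assumptions must be exploited uniformly in this slope; this uniformity, together with perturbative stability in $(a,c)$, is exactly what \cite{PeS3} provides, and the present statement records the resulting bound. Without such uniform control the factor $|b|^\nu$ could not be produced for the full range $|w|\leq B|b|$, so any alternative proof of this theorem would have to redo the LNIC-based oscillation argument for the whole one-parameter family of phases $\tau - r g$ with $r\in[-B,B]$.
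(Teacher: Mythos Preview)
The paper does not prove this theorem at all: immediately before the statement it says ``The following theorem was proved in \cite{PeS3}'' and gives no argument. So there is no proof in the paper to compare against.

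Your sketch is a reasonable high-level outline of how such results are obtained (normalize the real part via the leading eigendata, apply a Dolgopyat-type contraction to the purely imaginary twist, then undo the normalization and absorb the pressure shift into $\rho$). However, note that the substantive step~(ii) of your sketch---the uniform two-parameter contraction $\|L_{F_{a,c} - \i b\tau + \i w g}^m u\|_{\beta,b} \leq C_0\, \rho_0^m\, |b|^\nu\, \|u\|_{\beta,b}$ for $|w|\le B|b|$---is itself essentially the content of the theorem (in normalized form), and you obtain it by citing \cite{PeS3}. So your argument is really a reduction of the stated version to the normalized version plus an invocation of \cite{PeS3}, which is perfectly consistent with the paper's own treatment (a bare citation), just more explicit about the normalization step. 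If the intent was to give an independent proof, the real work---the LNIC-driven oscillation lemma uniform over the phase family $\tau - (w/b)g$ with $w/b\in[-B,B]$, which you correctly identify as the crux---remains to be carried out rather than cited.
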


\ms

\subsection{Sinai's lemma for suspension flows}

Given functions $G$ and $\tG$ on $\hrt$, define
$$g(x) = \int_0^{\tau(x)} G(x,t)\, dt,\:
\tg(x) =  \int_0^{\tau(x)} \tG(x,t)\, dt $$
for all $x \in \hR$. Here $G(x,t) = G(\pi(x,t)) = G(\sigma_t^\tau(x,0))$. It is easy to see that if $G \in C^\alpha(\hrt)$, then $g \in C^\alpha (\hR)$.

Consider the function (defined as in the proof of Proposition 1.2 in [PP])
\begin{equation} \label{eq:2.5}
p(x) =  \sum_{n=0}^\infty \left[g(\pp^n(x)) - g(\pp^n(\pi_U x))) \right], \quad x\in \hR .
\end{equation}
Since $x, \tx = \pi_U(x)$ belong to the same stable leaf in $\hR$, (\ref{eq:2.3}) implies
$$d (\pp^n(x), \pp^n(\pi_U(x))) \leq \frac{c_0}{\gamma^n} $$
for all $n \geq 0$. Thus, the series in (\ref{eq:2.5}) is convergent. Now define
\begin{eqnarray*} \tG(x,t) 
 =  G(\piU(x),t) \nonumber\\
 + \sum_{n=0}^\infty \Bigl(G(\pp^{n+1}(\piU(x))), t\, \tau(\pp^{n+1}(x))/\tau(x)) \nonumber \\
     - G(\pp^n(\piU(\pp(x))), t\, \tau(\pp^{n+1}(x))/\tau(x))\Bigr)  
\end{eqnarray*}
for $ x \in \hR$ and  $0 \leq t < \tau (x)$. Notice that since $\tau(x)$ is constant on stable leaves, we have
$$\tau( \pp^{n+1}(x)) = \tau(\pp^n(\piU(\pp(x))) .$$

\ms

The following is the analogue of the well-known Sinai's lemma (see e.g. Proposition 1.2 in \cite{PP}) for suspension flows.

\ms

\begin{prop}
$(a)$ {\it The function $p$  defined above belongs to  $C^\beta(\hR)$ for some $\beta > 0$ and
\begin{equation} \label{eq:2.6}
g(x) = \tg(x) + p(x) - p(\pp(x))
\end{equation}
for all $x \in \hR$. Moreover $\tG$ is constant on stable leaves of $\hrt$ and is $\beta$-H\"older, where }
\be \label{eq:2.7}
\beta = \alpha^2\alpha'/2 > 0 .
\ee 

\ms

$(b)$ {\it The function
\be \label{eq:2.8}
P(x,t) = \sum_{n=0}^\infty \left[ G(\pp^n(x), t\, \tau(\pp^n(x))/\tau(x) ) - G(\pp^n(\piU(x)), t\, \tau(\pp^n(x))/\tau(x) )\right]
\ee
$(x\in \hR$, $0 \leq t < \tau(x))$ is also $\beta$-H\"older on $\hrt$,
\be \label{eq:2.9}
p(x) = \int_0^{\tau(x)} P(x,t)\, dt, \quad x\in \hR ,
\ee
and
\be \label{eq:2.10}
G(x,t) = \tG(x,t) + P(x,t) - P(\pp(x), t\, \tau(\pp(x))/\tau(x) )
\ee
for all $x\in \hR$ and $0 \leq t  <\tau(x)$.}
\end{prop}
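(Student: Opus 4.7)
The plan is to follow the classical Sinai coboundary construction (Proposition 1.2 of \cite{PP}), adapted to the suspension flow by tracking the time-reparametrization factors $\tau(\pp^n x)/\tau(x)$. The proof has three ingredients: (i) absolute convergence of the series (2.5) and (2.8); (ii) the algebraic identities (2.6), (2.9), (2.10) together with the stable-leaf invariance of $\tg$ and $\tG$; and (iii) the H\"older regularity with the exponent (2.7). The first two are essentially formal once the bounds are in place; (iii) is the main technical obstacle.

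For (i), the key observation is that $x$ and $\piU(x)$ lie on the same local stable leaf, hence so do $\pp^n(x)$ and $\pp^n(\piU(x))$ for every $n \geq 0$, and (2.3) gives $d(\pp^n(x),\pp^n(\piU(x))) \leq c_0/\gamma^n$. Since $g \in C^\alpha(\hR)$, each term in (2.5) is bounded by $|g|_\alpha (c_0/\gamma^n)^\alpha$, yielding uniform geometric convergence. The same estimate controls (2.8), because $\tau$ is constant on stable leaves, so the time argument $t\tau(\pp^n(x))/\tau(x)$ belongs to $[0,\tau(\pp^n(x)))$ and lies in the natural domain of $G$ at both $\pp^n(x)$ and $\pp^n(\piU(x))$. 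For (ii), identity (2.6) is a direct telescoping: substituting (2.5) into $p(x)-p(\pp(x))$ and rearranging gives the form of $\tg(x)$ matching $\int_0^{\tau(x)} \tG(x,t)\,dt$; the pointwise version (2.10) is the analogous fiberwise telescoping, using that $\tau(\pp^{n+1}(x)) = \tau(\pp^n(\piU(\pp(x))))$ since these two points share a local stable leaf; (2.9) follows by termwise integration and the natural change of variable, justified by the geometric bound. Stable-leaf invariance is immediate from construction: if $x, x'$ share a local stable leaf, then $\piU(x) = \piU(x')$, $\piU(\pp x) = \piU(\pp x')$ and $\tau(\pp^k x) = \tau(\pp^k x')$ for all $k$, so every argument appearing in $\tG(x,t)$ agrees with that in $\tG(x',t)$.

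The main obstacle is (iii). For $x,y$ in a common rectangle $R_i$ with $\delta := d(x,y)$ small enough that $\pp^j(x), \pp^j(y)$ remain in matching rectangles for $j \leq N$, split
\begin{equation*}
|p(x)-p(y)| \leq S_{\mathrm{short}}(N) + S_{\mathrm{tail}}(N),
\end{equation*}
where $S_{\mathrm{short}}(N)$ collects the first $N$ terms written as $|g(\pp^n x)-g(\pp^n y)|+|g(\pp^n\piU x)-g(\pp^n\piU y)|$ and $S_{\mathrm{tail}}(N)$ collects the remaining pairs as $|g(\pp^n x)-g(\pp^n \piU x)|+|g(\pp^n y)-g(\pp^n \piU y)|$. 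For the short-range block, (2.2) together with (2.1) gives $d(\pp^n x,\pp^n y),\,d(\pp^n \piU x,\pp^n \piU y) \leq C\gamma_1^n \delta^\alpha$; H\"older regularity of $g$ then gives each summand $\leq C\gamma_1^{n\alpha} \delta^{\alpha^2}$, so $S_{\mathrm{short}}(N) \leq C\gamma_1^{N\alpha} \delta^{\alpha^2}$. For the tail, (2.3) gives each summand $\leq C\gamma^{-n\alpha}$, so $S_{\mathrm{tail}}(N) \leq C\gamma^{-N\alpha}$. Balancing these by $N \sim \alpha\log(1/\delta)/\log(\gamma_1\gamma)$ yields $|p(x)-p(y)| \leq C\delta^{\alpha^2 \log\gamma/\log(\gamma_1\gamma)}$, and since $\rho = \gamma_1^{\alpha'}/\gamma < 1$ forces $\log\gamma > \alpha'\log\gamma_1$, the elementary inequality $\log\gamma/\log(\gamma_1\gamma) \geq \alpha'/2$ supplies the exponent $\beta = \alpha^2\alpha'/2$. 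The identical scheme applied termwise to (2.8), combined with the fact that $t \mapsto t\tau(\pp^n x)/\tau(x)$ is Lipschitz in $x$ (from $\tau \in C^\alpha$ and the uniform lower bound on $\tau$), gives the same H\"older exponent for $P(x,t)$ on $\hrt$, and hence for $\tG$ via (2.10). Bookkeeping of the scale at which forward iterates leave common rectangles contributes only a trivial bound via $\|g\|_\infty$ at that scale, which is absorbed into the same $\delta^\beta$ estimate.
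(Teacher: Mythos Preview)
Your outline for the convergence of the series and the algebraic identities (2.6), (2.9), (2.10) is correct and matches the paper. The H\"older regularity argument, however, differs in execution and has one small gap.

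\textbf{Different route to the exponent.} For the H\"older estimate the paper does not optimise a free splitting parameter. It fixes $m$ as the maximal integer for which $\pp^j(x),\pp^j(y)$ stay in the same rectangles for $0\le j\le 2m-1$, observes from the lower bound in (2.2) that $d(\pp^{2m}(x),\pp^{2m}(y))\ge c$ is of order one, and then uses the \emph{backward} contraction from time $2m$ to bound $d(\pp^{n}(\tilde x),\pp^{n}(\tilde y))\le C\gamma^{-(2m-n)}$. Combined with the key inequality $(d(x,y))^{\alpha\alpha'/2}\ge c/\gamma^{m}$ (which follows directly from $\rho=\gamma_1^{\alpha'}/\gamma<1$), each short-range summand already carries the factor $(d(x,y))^{\beta}$, and the sum is geometric in $\gamma^{-\alpha(m-n)}$; no balancing is needed. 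Your split-and-balance argument with the forward expansion $\gamma_1^{n}\delta^{\alpha}$ is also valid, and your inequality $\log\gamma/\log(\gamma_1\gamma)\ge\alpha'/2$ is correct (since $\alpha'\log\gamma_1<\log\gamma$ and $\gamma<\gamma_1$). The paper's route is slightly cleaner because it yields the exponent $\beta=\alpha^2\alpha'/2$ directly, while yours reaches it through that auxiliary inequality.

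\textbf{A gap in the last step.} You deduce that $\tG$ is $\beta$-H\"older from (2.10) once $P$ is known to be $\beta$-H\"older. But (2.10) involves $P(\pp(x),\,t\,\tau(\pp(x))/\tau(x))$, and by (2.2) the map $x\mapsto\pp(x)$ is only $\alpha$-H\"older (and $x\mapsto\tau(\pp(x))$ only $\alpha^2$-H\"older), so composition gives at best $\alpha\beta$-H\"older regularity for $\tG$, not the claimed $\beta$. The fix is immediate: apply your same short/tail decomposition directly to the series defining $\tG$ (the tail is controlled because $\pp(\piU(x))$ and $\piU(\pp(x))$ lie on a common stable leaf, so (2.3) applies). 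This is exactly what the paper does, proving $\tG\in C^{\beta}$ directly rather than via (2.10).
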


\ms

We prove Proposition 2.2 in the Appendix.

\ms

\begin{rem}
$(a)$ It follows from the definition of $g$ that it is $\alpha$-H\"older with $|g|_\alpha \leq C |G|_\alpha$. 
Then $(\ref{eq:2.5})$ and $d(\pp^n(x), \pp^n(\piU(x)) \leq c_0/\gamma^n$, which follows from $(\ref{eq:2.3})$, imply
$$|p(x)| \leq \sum_{n=0}^\infty C |G|_\alpha\; c_0/\gamma^n \leq C\, |G|_\alpha$$
for all $x \in R$, so $|p|_\infty \leq C\, |G|_\alpha$.  Similarly, $|P|_\infty \leq C\, |G|_\alpha$.

\ms

$(b)$ Given $y \in R_i$ for some $i$, consider the function $w_y (x) = h([x,y])$ on $R_i$. Now $(2.1)$ implies
$$|w_y(x) - w_y(x')| \leq |h|_\alpha \; (d([x,y], [x',y]))^\alpha \leq C |h|_\alpha\; (d(x,x'))^{\alpha^2} .$$
Thus, $w_y \in C_{\alpha^2}$ and $|w_y|_{\alpha^2} \leq C\, |h|_\alpha \leq C \, |g|_\alpha$. This can
be written as
$$|p([\cdot,y])|_{\alpha^2} \leq C\, |G|_\alpha,\: y \in R_i\:\:, \: i = 1, \ldots, k .$$
By $(\ref{eq:2.7})$ , $\beta < \alpha^2$, so the above is also true with $\alpha^2$ replaced by $\beta$.
With $(a)$ this gives $\|h([\cdot,y])\|_{\alpha^2} \leq C\, |G|_\alpha$ and so
\be
\|p([\cdot,y])\|_{\beta} \leq C\, |G|_\alpha,\: y \in R_i\:\:, \: i = 1, \ldots, k .
\ee

$(c)$ $($Following Ch. 1 in \cite{B2}; in particular sections 1B and 1C$)$ 

Let $\tf : R\longrightarrow \R$ depend only on $x \in U$, i.e. $\tf(x) = \tf(x')$ whenever
$\piU(x) = \piU(x')$. Then we can regard $\tf$ as a function on $U$, $\tf \in C^\beta(U)$, and by the 
Ruelle-Perron-Frobenius Theorem there exist (unique) positive function $h_0 \in C^\beta(U)$
and a probability measure $\nu$ on $U$ such that $(\lc_{\tf})^*\nu = \nu$ and $\int_U h_0\, d\nu = 1 $. Then  
\be
dm_{\tf}(x) = h_0(x)\, \nu(x)
\ee
 is a $\sigma$-invariant probability measure on $U$, called the
Gibbs measure determined by $\tf$. It gives rise to a $\pp$-invariant probability measure $\m$ on $R$ as
follows. Given a continuous real valued function $w$ on $R$, define
$$w^*(x) = \min \{ w(y) : y \in W^s_R(x) \},\: x \in R .$$
Then $w^*\in C(R)$ and $w^*$ is constant on stable leaves of $R$, so it can be considered as a function in $C(U)$. Define
$$\int_R w(z)\, d\m(z) = \int_U w^*(x) \, dm_{\tf(x)} .$$
As in section 1C in \cite{B2}, one checks that this defines a $\pp$-invariant probability measure $\m$ on $R$.
Usually one denotes $\m = dm_{\tf}$ and calls this the {\it Gibbs measure} determined by $\tf$ on $R$.
The above definition shows that if $w\in C(R)$ depends only on $x\in U$, i.e. $w(x) = w(x')$ whenever $\piU(x) = \piU(x')$, then we have
$$\int_R w(z)\, d\m(z) = \int_U w(x)\, d m_{\tf}(x) .$$

$(d)$ To estimate $\|e^{p([\cdot : y])} h_0\|_\beta$, first we have
$$|e^{p([\cdot , y])} h_0|_\infty \leq e^{C |G|_\alpha} |h_0|_\infty \leq C e^{C |G|_\alpha} .$$
Using $(a)$,
$$|e^{p([\cdot : y])} |_\beta \leq e^{C |G|_\alpha}\; |p([\cdot , y])|_\beta \leq C |G|_\alpha\; e^{C |G|_\alpha} .$$
This implies
$$|e^{p([\cdot , y])} h_0|_\beta \leq |e^{p([\cdot : y])}|_\infty\, |h_0|_\beta + |e^{p([\cdot : y])}|_\beta\, | h_0|_\infty \leq
C,$$
$$ e^{C |G|_\alpha} + C |G|_\alpha\; e^{C |G|_\alpha} \leq C |G|_\alpha\; e^{C |G|_\alpha} .$$
Combining the above estimates, yields
$$\|e^{p([\cdot , y])} h_0\|_\beta \leq C |G|_\alpha\; e^{C |G|_\alpha} $$
and this estimate is uniform in $y \in R$.
\end{rem}

\subsection{ Application of Proposition 2.2}

By Proposition 2.2 there exist functions $\tF(w, t) $ and $Y(w, t)$ such that
$$
F(w, t) = \tF(w, t) + Y(w, t) - Y\Bigl(\pp(w), \frac{t \tau(\pp(w))}{\tau(w)}\Bigr),$$
where $\tF(w, t)$ is constant on stable leaves. Let
$$\tf(w) = \int_0^{\tau(w)} \tF(w, t) dt, \quad  \: y(w) = \int_0^{\tau(w)} Y(w, t) dt .$$
By a change of variables $t \frac{\tau(\pp(w))}{\tau(w)} = s,$ one deduces
$$ \int_0^{\tau(w)}Y\Bigl(\pc(w), \frac{t \tau(\pc(w))}{\tau(w)}\Bigr)dt = y(\pp(w)) .$$

Now for the equilibrium state $m_F$ one obtains
$$m_F  = \frac{1}{\int \tau(u) d\mu_{\tf} } \Bigl(\mu_{\tf} (w)\times l\Bigr)$$
since 
$$\mu_{f(w)} = \mu_{\tf(w) + y(w) - y(\pc(w))} = \mu_{\tf}(w).$$ 
Hear $\mu_q$ denote the equilibrium state of $q$ which is a probability measure invariant with respect to $\pp$ and $l$ is the Lebesgue measure on $\R$.
Obviously, $\tf$ depends only on $x = \pi_U(w) \in U$. Moreover, adding a constant, we preserve $m_F$ and can arrange $\Pr(F) = 0$. Since 
$${\Pr_{\pc}} (f(w) - \Pr(F) \tau) = 0,$$
this implies
$$0  = {\Pr}_{\pc}(\tf(w) + y(w) - y(\pc(w)) = {\Pr}_{\pc}(\tf(w)).$$
We may express the pressure by
$$\Pr_{\pc}(\tf) = \lim_{n \to \infty}\log \sum_{\pc^n w = w} e^{\tf^n (w)}.$$
Since $\tf(w)$ depends only on $x = \pi_U w$  and $\pc^n w = w$ implies $\sigma^n x = x,$ one deduces $\Pr_{\pc}(\tf(x)) = \Pr_{\sigma}(\tf(x)) = 0.$
Therefore  the Ruelle operator $\lc_{\tf}$ has 1 as an eigenvalue with eigenfunction $h(x) > 0.$  Moreover, we have $d\mu_{\tf}(x) = h(x) d\nu(x)$ and $\nu(x)$ is a 
$\sigma$-invariant measure on $U$ which can be considered as a $\pp$-invariant measure on $R$ as we have mentioned above. For this measure we have
\begin{equation} \label{eq:2.13}
(\lc_{\tf}^*) ^n\nu(x) = \nu(x), \: n \geq 1.
\end{equation}

\section{Representation of the function $Z(s, \omega, a)$}
\renewcommand{\theequation}{\arabic{section}.\arabic{equation}}
\setcounter{equation}{0}

Let $\chi \in C_0^{\infty}(\R; \R^+)$ be a fixed cut-off function. Set $q_n(t) = e^{\xi(a) t}\chi_n(t),$ where 
$$\chi_n(t) = \chi\Bigl(\frac{t}{\ep_n}\Bigr),\:\ep_n = e^{-\ep n}, \: 0 < \ep \leq\mu_0/8,$$ 
 $\mu_0 >0$ being the constant introduced in Proposition 4.2 in Section 4.

 The Fourier transform of $\chi_n$ satisfies
$\widehat{\chi_n}(\omega) = \ep_n\, \hat{\chi}(e_n\, \omega).$  
Given a continuous function $Q$ on $\rt$, consider 
$$Q^T(\tilde{w}) =  \int_0^T Q(\vt_t(\tilde{w})) dt,\: \tilde{w} \in \rt.$$
 Notice that we have
$$\Pr_{\pp}(q - \Pr(Q)\tau) = 0$$
if $q(w) = \int_0^{\tau(w)} Q(w, t) dt, \: w \in R$ (see \cite{PP}).\\

To establish Theorem 1.3, we will study the nonnegative function 
$$\rho_n(T) := \int_{\rt} q_n((G^T - aT)(y)) d m_{F}(y).$$
We have
\begin{eqnarray*}
\rho_n(T) = \int_{\rt} q_n((G - a)^T(y))\, d m_F(y)  =  \int_{\rt} e^{\xi(a) (G -a)^T(y)}\, \chi_n((G - a)^T(y))\, dm_F(y)\\
=  \frac{1}{2\pi}\, \int_{\rt} \, \int_{\R} e^{\xi(a) (G - a)^T(y)}\, e^{\i \omega (G - a)^T(y)}\, \hat{\chi}_n(\omega)\, d m_F(y) \, d\omega\\
 =  \frac{1}{2\pi}\,  \int_{\R} \left( \int_{\rt} \, e^{z\, (G - a)^T(y)}\,d m_F(y)\right)\,  \hat{\chi}_n(\omega)\,  \, d\omega\;,
\end{eqnarray*}
where $z = \xi(a) + \i \omega$.

Define $\Gamma_z(T)$ for $T \geq 0$ by
$$\Gamma_z(T) = \int_{\rt} \, e^{z\, (G- a)^T(y)}\,d m_F(y)
$$ 
and $\Gamma_z(T) = 0$ for $T < 0$.
Our purpose is to study the Laplace transform 
\be \label{eq:3.1}
Z(s, \omega, a)
= \int_0^\infty e^{- sT}\,\Gamma_z(T)\, dT,\:s\in \C, \: \omega \in \R.
\ee

Since  for large $M_0 > 0$ we have $|\Gamma_z(T)| \leq C e^{C_1 T},\:\Re s >M_0$, with some constant $C_1 > 0$, the transformation $Z(s, \omega, a)$ exists for $\Re s > M_0$ 
uniformly with respect to $\omega \in \R$.  
In Section 4 we will show that $Z(s, \omega, a)$ has an analytic extension to
$$\{s \in \C: \: \gamma(a) - \mu_0 \leq \Re s, \omega \in \R\} \setminus \{(s(\omega,a):\: |\omega| \leq \ep_0\}, $$
with $\ep_0 > 0$ and $\mu_0 > 0$ sufficiently small. Here $s(\omega, a)$ is a 
simple pole described in Section 4.  We use the notation $Z(s, \omega, a)$ since the Laplace transform depends  on $s \in \C, \omega \in \R$ and $a$.

Set
$$f(w) = \int_0^{\tau(w)} F(w, t) dt,\:  g(w) = \int_0^{\tau(w)} G(w, t) dt,\: w \in R.$$
 We repeat the argument of Section 4 in \cite{W} (see also \cite{Po}) 
 to obtain a representation of $Z(s, \omega, a).$
For the equilibrium state $m_F$ of $F$ we apply the reduction in Subsection 2.4 and we obtain the measure $d\mu_{\tf}(x) = h(x) \nu(x)$, where $\tf(x)$ depends only on $x \in U$. 
For simplicity of the notation in the following we will denote $\tf(x)$ again by $f(x)$ and $\mu_{\tf}$ by $\mu_f$. 
Moreover, in  the following we assume that $G$ {\bf is constant on the stable foliations} in $R^{\tau}$, so $g(x)$ depends only on $x \in U.$ 

 Given a H\"older function $Q(w)$ on $\rt$, we have

$$\int_{R^{\tau}} Q(w) d m_F(w) = \frac{ \int_R \int_0^{\tau(x)} Q(x,\eta) d\eta d \mu_q(x)}{\int\tau d \mu},$$
where $\mu_q$ is the equilibrium state of $q(w) =\int_0^{\tau(w)} Q(w, t) dt.$
Therefore, setting $Q = z(G- a)$, we obtain
$$Z(s, w, a) = \frac{1}{\int\tau d\mu} \int_0^{\infty} e^{-(s + az)T} \Bigl(\int_R\int_0^{\tau(x)} e^{zG^T(x, \eta)} d\eta d\mu(x)\Bigr) dT$$
$$= \frac{1}{\int \tau d\mu} \int_0^{\infty} e^{-(s + az)T} \Bigl(\int_U \int_0^{\tau(x)} e^{zG^{T+ \eta}(x, 0)- zG^{\eta}(x, 0)} d\eta h(x)d\nu(x)\Bigr) dT.$$
 Here we interpret the integral on $R$ as an integral on $U$ as we have mentioned in Remark 1(c) in Section 2.
Given $T > 0,x \in U, 0 \leq\eta \leq \tau(x),$ there exists a unique choice of $n \geq 0$ and $0 \leq v < \tau(\sigma^n x)$ so that
$T + \eta = v + \tau^n(x)$. Notice that when $x \in U$ changes the integer $n$ may change but since
$$T - \tau(\sigma^n x) \leq \tau^n(x) = T + \eta - v \leq T + \tau(x),$$
we deduce that for fixed $T, \:T \geq T_0$, and all $x \in U$ there is only a finite number (depending on $T$) of possible choices for $n$.

For $T + \eta = \tau^n(x) + v$ one applies the formula
$$e^{G^{T + \eta}(x, 0)} = \sum_{n = 0}^{\infty} \int_0^{\tau(\sigma^nx)} e^{G^{v + \tau^n(x)}(x, 0)} \delta( \eta + T - v - \tau^n(x)) dv,$$ 
where for fixed $x \in U,\: T \geq T_0$ only one term in the infinite sum is not vanishing (see \cite{Po}, \cite{W} for a similar argument).
Then we may transform the integral 
$$\frac{1}{\int \tau d\mu} \int_0^{\infty} e^{-(s + az)T} \Bigl(\int_U \int_0^{\tau(x)} e^{z G^{T+ \eta}(x, 0)- z G^{\eta}(x, 0)} d\eta h(x)d\nu(x)\Bigr) dT$$
in the above expression for $Z(s, \omega, a)$, as in Section 4 in \cite{W} to obtain the representation

\begin{eqnarray*}
Z(s, \omega, a) = \frac{1}{\int \tau d\mu} \sum_{n=0}^{\infty}\int_U e^{-(s + (\xi(a) + \i \omega)a)\tau^n(x) + (\xi(a) + \i \omega)g^n(x)}\\
\times B_1(s, \omega, a, \sigma^n(x)) B_2(s, \omega, a, x) h(x)d \nu(x),
\end{eqnarray*}
where
$$B_1(s, \omega, a, x) = \int_0^{\tau(x)} \exp\Bigl( -(s+ az) v + z G^v(x, 0)\Bigr) dv,$$
$$ B_2(s, \omega, a,  x) = \int_0^{\tau(x)}\exp\Bigl((s + a z)\eta - z G^{\eta}(x, 0)\Bigr) d \eta.$$

We apply (\ref{eq:2.13}) and then use the adjoint of the Ruelle operator $\lc_{f}^*$, noting that
$$\Bigl[\lc_{f}^n \Bigl(e^{-(s + (\xi(a) + \i \omega) a)\tau^n + (\xi(a) + \i \omega)) g^n} d\Bigr)\Bigr](y)$$
$$= \Bigl[\lc^n_{f - s \tau + (\xi(a) + \i \omega)(g - a \tau)} d\Bigr](y). $$
Therefore, we conclude that
\begin{eqnarray} \label{eq:3.2}
Z(s, \omega, a) = \frac{1}{\int \tau d\mu} \sum_{n = 0}^{\infty}\int_U B_1(s, \omega, a, y) \nonumber \\
\times\Bigl({\mathcal L}^n_{f -(s  \tau + (\xi(a) +\i \omega)(g -  a \tau)}
\Bigl[h(.)B_2(s, \omega, a, .)\Bigr]\Bigr)(y) d \nu(y).
\end{eqnarray}

\section{Meromorphic extension of $Z(s, \omega, a)$}
\renewcommand{\theequation}{\arabic{section}.\arabic{equation}}
\setcounter{equation}{0}

 We assume $f(x)$ and $g(x), \: x \in U$, fixed as in Section 3. 
Introduce the Ruelle operator

$$\lc_{s, \omega, a} = \lc_{f  - s \tau + (\xi(a) + \i \omega)(g- a\tau) },\: s \in \C, \: \omega \in \R.$$

It is easy to see that for $s = \gamma(a)$ and  $\omega = 0$ we have
$$\Pr_{\pc} (f +  \xi(a) (g -a \tau) -\gamma(a)\tau) = 0.$$
Indeed,
$$\gamma(a) = \Pr\:(F + \xi(a) G) - \xi(a) a $$
and
$$\Pr_{\pc}(f + \xi(a) g- \Pr\:(F + \xi(a) G)\tau) = 0.$$
Notice that there is an unique number $t$ such that
$$\Pr_{\pc}(f +\xi(a)(g - a \tau) - t \tau) = 0.$$
Since $f, g, \tau$ depend only on $x \in U$, as in Subsection 2.4 we deduce that
$$\Pr_{\sigma} (f + \xi(a) (g - a \tau) - \gamma(a) \tau) = 0.$$
 Below we will write simply $\Pr$ instead of $\Pr_{\sigma}$ if there are no confusions.

Set $f_a =f + \xi(a) (g - a \tau)$ and  consider the Ruelle operator
$\lc_{s, \omega, a},$ 
where $s = \gamma(a) + q + \i b,\: q \in \R, \: b \in \R,\: \omega \in \R$. 
 Let
$$p(s, w, a) = f_a - s \tau + \i \omega (g - a \tau),\: s = \gamma(a) + q +\i b.$$
Since $\Pr \:p(\gamma(a), 0, a) = 0,$ by a standard argument we may define $\Pr\: p(s, \omega, a)$ for $(s, \omega)$ in a small neighbourhood of $(\gamma(a), 0)$ in $\C^2$ (see \cite{PP}). 
Recall the following result proved in \cite{W}.

\begin{prop} [Proposition 4, \cite{W}] Let $G^{\alpha}(\rt)$ be a function such that $G$ and $\vt_t$ are flow independent. Assume that $G$ is constant on stable leaves. Then \\
$(i)$ The function $Z(s, \omega, a)$ is analytic for $(s, \omega) \in \{s \in \C:\: \Re s > \gamma(a)\} \times \R.$\\
$(ii)$ There exists an open neighbourhood $W$ of $(\gamma(a), 0)$ in $\C^2$ such that for $(s, \omega) \in W$ we have
\begin{equation} \label{eq:4.1}
Z(s, \omega, a) = \frac{B_3(s, \omega,a)}{ 1 - \exp\Bigl(\Pr(p(s, \omega, a))\Bigr)} + J(s, \omega, a),
\end{equation}
where
\begin{equation} \label{eq:4.2}
B_3(s, \omega, a) = \frac{1}{\int\tau d\mu} \int_ R B_1(s, \omega ,a, .) h_{p(s, \omega, a)} (x)d\nu(x) \int h B_2(s, \omega, a, .)d \nu_{p(s, \omega, a)}.
\end{equation} 
and $J(s, \omega, a)$ is analytic for $(s, \omega) \in W.$ Here $h_{p(s, \omega, a)}(x) > 0$ is the eigenfunction corresponding to the eigenvalue $e^{\Pr(p(s, \omega, a))}$ of $\lc_{p(s, \omega, a)}$ 
and similarly the measure $\nu_{p(s, \omega, a)}$ is determined by the eigenvalue $e^{\overline{\Pr(p(s, \omega, a))}}$ of the dual operator $\lc^*_{p(s,\omega a)}.$\\
$(iii)$ $Z(s, \omega, a)$ is analytic for $(s, \omega)$ in an open neighbourhood $V_1$ of $\{s: \Re s = \gamma(a), \: s\neq \gamma(a))\} \times \{0\}.$\\
$(iv)$ For each $\omega \in \R \setminus \{0\}$, $Z(s, \omega, a)$ is analytic for $(s, \omega)$ in an open neighbourhood $V_2$ of $\{s: \Re s = \gamma(a)\} \times \{\omega\}.$
\end{prop}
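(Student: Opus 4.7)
The plan is to analyze the series representation (3.2) using spectral theory for the complex-parameter Ruelle transfer operators $\mathcal{L}_{s,\omega,a}$ acting on $C^\alpha(\hU)$, combining classical Perron--Frobenius/perturbation theory near the leading eigenvalue with the Dolgopyat-type estimates of Theorem 2.1 at large imaginary parameters.

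For part (i), when $\Re s > \gamma(a)$ the spectral radius of $\mathcal{L}_{s,\omega,a}$ is bounded by $\exp(\Pr_\sigma(f_a - (\Re s)\tau))$. Since $\tau > 0$ and $t \mapsto \Pr_\sigma(f_a - t\tau)$ is strictly decreasing and vanishes at $t = \gamma(a)$, this spectral radius is strictly less than $1$. Hence $\sum_n \mathcal{L}^n_{s,\omega,a}$ converges in operator norm, uniformly on compact subsets of $\{\Re s > \gamma(a)\} \times \R$, and joint analyticity of $Z(s,\omega,a)$ follows from (3.2).

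For part (ii), I would invoke Kato's analytic perturbation theory for isolated simple eigenvalues. At $(\gamma(a), 0)$ the operator $\mathcal{L}_{\gamma(a),0,a}$ has a simple leading eigenvalue equal to $1$ with positive eigenfunction $h_{p(\gamma(a),0,a)}$, and the rest of its spectrum lies strictly inside a disk of radius $\rho_0 < 1$. On a small neighbourhood $W \subset \C^2$ of $(\gamma(a), 0)$, perturbation theory produces a holomorphic family of simple leading eigenvalues $\lambda(s,\omega,a) = \exp(\Pr(p(s,\omega,a)))$ with spectral projections
$$ P_{s,\omega,a}\phi \;=\; h_{p(s,\omega,a)}\, \int \phi\, d\nu_{p(s,\omega,a)}, $$
and a complementary piece $N_{s,\omega,a}$ of spectral radius uniformly at most some $\rho < 1$. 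Then
$$ \sum_{n=0}^\infty \mathcal{L}_{s,\omega,a}^n \;=\; \frac{P_{s,\omega,a}}{1 - \lambda(s,\omega,a)} \;+\; (I - N_{s,\omega,a})^{-1}, $$
where the second term is jointly analytic on $W$. Substituting into (3.2) and collecting terms produces the pole factor with residue $B_3(s,\omega,a)$ as in (4.2), while the $N$-part contributes the analytic function $J(s,\omega,a)$.

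For parts (iii) and (iv), by (3.2) analyticity reduces to showing that $1$ is not in the spectrum of $\mathcal{L}_{s,\omega,a}$ on the relevant portion of the line $\Re s = \gamma(a)$. Since the real part of the exponent equals $f_a - \gamma(a)\tau$, the spectral radius is at most $1$, and quasi-compactness on $C^\alpha(\hU)$ reduces matters to absence of eigenvalues of modulus $1$. A standard argument (cf.\ Chapters 4--5 of \cite{PP}) shows that $1$ is such an eigenvalue for $s = \gamma(a) + ib$ if and only if $-b\tau + \omega(g - a\tau)$ is cohomologous modulo $2\pi\Z$ to a constant with respect to $\sigma$. Flow-independence of $G$ and $\varphi_t^\tau$ implies $\sigma$-independence of $\tau$ and $g - a\tau$ (see the remark following Definition 1.2), ruling out any such cohomology relation unless $b = 0$ and $\omega = 0$. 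This excludes the eigenvalue in both cases: in (iii) we have $b \neq 0$ at $\omega = 0$, and in (iv) we have $\omega \neq 0$. Consequently $(I - \mathcal{L}_{s,\omega,a})^{-1}$ exists as a bounded operator on $C^\alpha(\hU)$ at each such point.

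The main obstacle I foresee is upgrading the pointwise absence of $1$ from the spectrum to analyticity on a full open neighbourhood. Upper semicontinuity of the spectrum together with quasi-compactness yields a uniform spectral gap on a small open neighbourhood of each point, giving analyticity of $(I - \mathcal{L}_{s,\omega,a})^{-1}$ there. A compactness argument handles bounded portions of the critical line; for the tails $|b| = |\Im s| \to \infty$, Theorem 2.1 supplies the uniform bound $\|\mathcal{L}^n_{s,\omega,a}\|_{\beta,b} \leq C |b|^\nu \rho^n$ whenever $|\omega| \leq B|b|$, so the geometric series converges and provides analyticity up to infinity. Patching these neighbourhoods yields the open sets $V_1$ and $V_2$ claimed in (iii) and (iv).
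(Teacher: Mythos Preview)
Your argument is correct and follows essentially the same route as the paper (which itself defers the full proof to \cite{W}, but reproduces the argument for (iii) inside Case~1 of the proof of Proposition~4.2). Part (i) via spectral radius $\leq e^{\Pr_\sigma(f_a-(\Re s)\tau)}<1$, part (ii) via analytic perturbation of the simple leading eigenvalue, and parts (iii)/(iv) via the cohomology criterion combined with $\sigma$-independence of $\tau$ and $g$ --- all of this matches the paper's approach.

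Two small remarks. First, your phrasing of the cohomology criterion is slightly off: the eigenvalue $1$ occurs for $\mathcal{L}_{\gamma(a)+ib,\omega,a}$ if and only if $-b\tau+\omega(g-a\tau)$ is cohomologous to a function in $C(\hU;2\pi\Z)$, not ``to a constant modulo $2\pi\Z$'' (the latter is the criterion for \emph{some} peripheral eigenvalue $e^{ic}$). Your conclusion is unaffected, since $\sigma$-independence is precisely the statement that this stronger relation forces $b=\omega=0$. Indeed, the paper's version of the argument (Case~1 of Proposition~4.2) makes the two-case distinction explicit: either the imaginary part is cohomologous to $c+2\pi Q$ with $c\neq 0$, giving a peripheral eigenvalue $e^{ic}\neq 1$, or it is not, giving spectral radius strictly below~$1$.

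Second, invoking Theorem~2.1 for the tails $|b|\to\infty$ in (iii)/(iv) is unnecessary. An open neighbourhood of the critical line is just a union of open sets, so the pointwise argument (quasi-compactness plus absence of $1$ from the spectrum, which holds at every point of the line by the cohomology criterion) already suffices --- no compactness and no uniform width are needed. The paper reserves Theorem~2.1 for Proposition~4.2, where a \emph{uniform} strip $\Re s\geq\gamma(a)-\mu_0$ together with the polynomial growth bound (4.6) is required; Proposition~4.1 itself makes no such quantitative claim.
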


For our analysis we need to estimate the norms 
$$\|B_1(s, \omega, a,x)\|_{\infty},\:\| B_2(s,\omega, a,.)\|_{\beta}$$

The norm $\|B_2(s, \omega, a, .)\|_{\infty}$ is easily estimated uniformly with respect to $\omega\in \R$,
since
$$\big|\exp\Bigl((s + a(\xi(a) + \i \omega))\eta - (\xi(a) + \i \omega) G^{\eta}(x, 0)\Bigr)\big| $$
$$\leq \exp \Bigl((|\Re s| + a|\xi(a)|)\eta + |\xi(a)||G^{\eta}(x, 0)|\Bigr)$$
and 
$$\|B_2(s, \omega, a, x)\|_{\infty} \leq \exp \Bigl((|\Re s| + a|\xi(a)|)\kappa_1 + |\xi(a)|\max_{x \in U} G(x, 0) \kappa_1\Bigr),$$
where $\kappa_1 = \max_{x \in U}\tau(x).$
Similarly, one treats the norm  $\|B_1(s, \omega, a,.)\|_{\infty}.$
For the norm $|B_2(s, \omega, a, .)|_{\beta}$ we apply the following elementary estimate. Let
$$k(x) = \int_0^{\tau(x)}e^{(s+az)\eta} e^{K^{\eta}(x, 0)} d\eta,\: K \in C^{\beta}(\rt).$$
Then 
\begin{eqnarray*}
|k(x) - k(y)| = \big |\int_0^{\tau(x)}e^{(s + a z)\eta} e^{K^{\eta}(x, 0)} d\eta - \int_0^{\tau(y)}e^{(s + a z)\eta} e^{K^{\eta}(y, 0)} d\eta\big |\nonumber \\
\leq \int_0^{\tau(x)} e^{|\Re s + a \xi(a)|\eta}\Bigl|e^{K^{\eta}(x, 0)} - e^{K^{\eta}(y, 0)}\Bigr| d\eta + \big|\int_{\tau(x)}^{\tau(y)} e^{(s + a z)\eta} e^ {K^{\eta}(y, 0)} d\eta \big|.
\end{eqnarray*}
The second term in the right-hand-side is estimated by 
$$\exp(\max_{0 \leq \eta \leq \kappa_1}|\Re K^{\eta}| + |\Re s + a\xi(a)|\kappa_1) |\tau(x) - \tau(y)|.$$

 For the first term on the right we use the inequality 
$$|e^{z_1} - e^{z_2}| = \big|\int_{z_1}^{z_2} e^u du\big| \leq e^{ |\Re z_1| + |\Re z_2|} |z_1 - z_2|,$$
and we obtain
$$\int_0^{\tau(x)}e^{|\Re s + a \xi(a)|\eta}\Bigl |e^{K^{\eta}(x, 0)} - e^{K^{\eta}(y, 0)}\Bigr| d\eta \leq \exp\Bigl(( 2 \|\Re K\|_{\infty} + |\Re s + a\xi(a)|)\kappa_1\Bigr)$$
$$ \times \int_0^{\tau(x)}|K^{\eta}(x, 0) - K^{\eta} (y, 0)| d\eta$$
$$\leq \exp\Bigl(( 2 \|\Re K\|_{\infty} + |\Re s + a\xi(a)|)\kappa_1\Bigr) \int_0^{\tau(x)} \int_0^{\eta}\Bigl| K(\sigma^{\tau}_t (x, 0)) - K(\sigma^{\tau}_t (y, 0))\Bigr| dt d\eta$$
which yields an estimate for $|B_2(s, \omega, a, .)|_{\beta}$ uniformly with respect to $\omega \in \R.$

 For $(s, \omega) = (\gamma(a), 0)$ one has a maximal real eigenvalue 1 of $\lc_{(\gamma(a),0, a)}$  and the rest of the spectrum is contained in a disk of radius $0 < r < 1$. 
 By perturbation theory there exists an unique eigenvalue with maximal modulus of $\lc_{s,\omega, a}$ given by
  $$\lambda_{s, \omega, a} = \exp\Bigl(\Pr (p(s, \omega, a))\Bigr) ,$$
 defined for $(s, \omega) \in W.$ 
We get
$$\frac{\partial \lambda_{s, \omega, a}}{\partial s}\bigg \vert_{(s, \omega, a) = (\gamma(a), 0, a)} = - \int \tau d \mu_{f_a} <  0,$$
where $\mu_{f_a}$ is the equilibrium state of $f_a$.
By the implicit function theorem (see Lemma 3 in \cite{W}) for small $\epsilon_1 > 0$ we may determine $s = s(\omega, a),\: |\omega| \leq \ep_0,$ from the equation $\lambda_{s, \omega, a} = 1$ so that 
$$\lambda_{(s(\omega, a), \omega, a)} = 1, \: s(0, a) = \gamma(a).$$
Therefore 
$$\frac{s - s(\omega, a)}{1 - \exp\Bigl(\Pr(p(s, \omega, a))\Bigr)} = \Bigl(\int \tau d\nu_{f_a -s(\omega, a) \tau + \i \omega g}\Bigr)^{-1} + {\mathcal O} (s- s(\omega, a)).$$
This shows that we have a pole at $s = s(\omega, a)$ and taking the residue at $s(\omega, a)$, the singular term in (\ref{eq:4.1}) becomes
$$ \Bigl(\frac{B_3(s(\omega, a), \omega,a)}{\int \tau d\nu_{f_a -s(\omega, a) \tau + \i \omega g}}\Bigr)\frac{1}{s- s(\omega, a)}.$$

Now we will show  that $Z(s, \omega, a)$ has an meromorphic continuation across the line $\Re s = \gamma(a)$.
First note that for $a = \int_U G(y) dm_{F + \xi(a) G}$ with a H\"older function $G \geq g_0 > 0$ on $U$ one has
$$g_0 \leq a \leq \max_{y \in U} G(y) = m.$$
 Let $0 < \eta < g_0/2$ be a fixed number. We will apply the spectral estimates for the operator $\lc_{s, \omega, a}$ given in Theorem 2.1 in Section 2 (see \cite{PeS3}). It is possible to write 
 $\lc_{s, \omega, a}$ in two different forms
\begin{eqnarray*}
\lc_{1,s, \omega, a}= \lc_{h_a - (\Re s -\gamma(a))\tau- \i \Im s \tau + \i \omega(g - a \tau)},\\
\lc_{2,s, \omega, a}= \lc_{h_a - (\Re s- \gamma(a))\tau -\i (\Im s + a \omega) \tau + \i \omega g},
\end{eqnarray*}
where $h_a = f_a - \gamma(a) \tau$ and $\Pr_{\sigma}(h_a) = 0.$ In the operators $\lc_{k,s,\omega, a},k = 1,2,$ we have different factors $-\i \Im s $ and $-\i (\Im s + a \omega)$ in front of $\tau.$ 
Applying Theorem 2.1, we can find $a_0 > 0$ and constants
$0 < \rho < 1, \: M > 0$ such that for $|\Re s - \gamma(a)| \leq a_0$ and any $\nu > 0$ we have
\begin{equation} \label{eq:4.3}
\|\lc_{1, s, \omega, a}^m h\|_{\beta, \Im s} \leq C(\nu, B_1) \rho^m |\Im s|^{\nu} \|h\|_{\beta, \Im s}
\end{equation}
for $|\Im s | \geq M,\:|\omega| \leq B_1|\Im s|,$
\begin{eqnarray}\label{eq:4.4}
\|\lc_{2, s, \omega, a}^m h\|_{\beta, (\Im s + a\omega)} \leq D(\nu, B_2) \rho^m |\Im s + a \omega|^{\nu} 
\|h\|_{\beta, \Im s + a \omega}
\end{eqnarray}
for $\:|\Im s + a \omega| \geq M,\:|\omega| \leq B_2|\Im s + a \omega|.$
Let us remark that we can take the same constants $a_0, \rho$ and $M$ in both estimates above, since if we have constants $$a_k > 0,\:0 < \rho_k < 1, M_k > 0, k = 1, 2$$
for the operators $\lc_{k,s,\omega, a},$  we can choose 
$$a_0 = \min\{a_1, a_2\},\:\rho = \max\{\rho_1, \rho_2\},\: M = \max\{M_1, M_2\}.$$
On the other hand, the constants $C(\nu, B_1)$ and $D(\nu, B_2)$ 
depend  on $(\nu, B_1)$ and $(\nu, B_2)$, respectively.

\medskip

\begin{prop}
Assume  the assumptions of Theorem $1.3$ fulfilled. Then for any  H\"older continuous functions
$F, G \in C^{\alpha} (\hrt)$ there exist $\mu_0 > 0$ and $\ep_0 > 0$ such that the function $Z(s, \omega, a)$ admits a meromorphic continuation for 
\begin{equation} \label{eq:4.5}
(s, \omega) \in \{(s, \omega) \in \C^2:\: \Re s \geq \gamma(a) - \mu_0,\: \omega \in \R\}
\end{equation}
with only one simple pole at $s(\omega, a),\: |\omega | \leq \ep_0.$ The pole $s(\omega, a)$ is determined as the root of the equation $\Pr(f_a - s \tau + \i\omega (g- a \tau)) = 0$ with respect to $s$ for $|\omega| \leq \ep_0.$ Moreover, there exist constants $\eta > 0,$ $M > 0$  such that for any $\nu > 0$ if $|\Im s| \geq M$ or $ |\omega| \geq \frac{1}{\eta}M$, we have the estimate
\begin{equation} \label{eq:4.6}
|Z(s, \omega, a)| \leq B_{\nu} (|\Im s| ^{\nu} + |\omega|^{\nu}), \: \Re s \geq \gamma(a) - \mu_0, 
\end{equation}
uniformity with respect to $a \in J$ in a compact interval $J \Subset \Gamma_G$ with a constant $B_{\nu} >0$ independent on $s, \omega$ and $a \in J.$
\end{prop}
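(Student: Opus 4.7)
I would cover the strip $\{\Re s \geq \gamma(a)-\mu_0\}\times\R$ with four overlapping pieces and patch a meromorphic continuation from local analyses carried out on each. The main tool on the unbounded pieces is the representation $(3.2)$, into which I insert the spectral estimates of Theorem 2.1 using whichever of the two twists $\lc_{1,s,\omega,a}$ or $\lc_{2,s,\omega,a}$ is appropriate; on the bounded pieces I use Proposition 4.1. The constants $\ep_0$ and $\mu_0$ are chosen small at the end so that the four pieces actually overlap.

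\textbf{Region I: $|\Im s|\geq M$, $|\omega|\leq B_1|\Im s|$.} Here I would apply $(4.3)$ with a small fixed $\nu\in(0,1)$. Combined with the uniform bounds $\|B_1(s,\omega,a,\cdot)\|_\infty\leq C_1$ and $\|hB_2(s,\omega,a,\cdot)\|_{\beta,\Im s}\leq C_2(1+|\Im s|^\nu+|\omega|^\nu)$, which follow from the elementary computation of $|B_2|_\beta$ carried out right after Proposition 4.1 and the factor $1/|\Im s|$ in the norm $\|\cdot\|_{\beta,\Im s}$, the series in $(3.2)$ converges geometrically (with ratio $\rho<1$) to an analytic function on this region and yields exactly the bound $(4.6)$. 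Region II: $|\omega|\geq M/\eta$, $|\Im s|\leq B_2|\Im s+a\omega|$. Since $a\in J\Subset(0,\infty)$ is bounded away from zero, for $|\omega|$ large compared to $|\Im s|$ one has $|\Im s + a\omega|\asymp|\omega|$, so applying Theorem 2.1 in the form $(4.4)$ gives $\|\lc^m_{2,s,\omega,a}\|_{\beta,\Im s+a\omega}\leq D\rho^m|\omega|^\nu$. Plugging this into $(3.2)$ delivers analyticity and the bound $(4.6)$ with $|\omega|^\nu$ on the right.

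\textbf{Bounded box.} The remaining region $\{|\Im s|\leq M,\,|\omega|\leq M/\eta,\,\Re s \geq \gamma(a)-\mu_0\}$ is compact. Near $(\gamma(a),0)$ Proposition 4.1(ii) provides a meromorphic extension with a single simple pole at $s(\omega,a)$ for $|\omega|\leq\ep_0$, characterised by $\Pr\,p(s,\omega,a)=0$; by the implicit function theorem (as recalled just before the present Proposition) this root exists and is simple after shrinking $\ep_0$. For $(s,\omega)$ with $|\omega|>\ep_0$ in the compact range and $\Re s=\gamma(a)$, Proposition 4.1(iii)(iv) supply local analyticity of $Z$. A standard open-cover/compactness argument on the compact set $\{\Re s=\gamma(a),\,\ep_0\leq|\omega|\leq M/\eta,\,|\Im s|\leq M\}$ then extracts a uniform $\mu_0>0$ over which the continuation remains analytic and pole-free. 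Uniformity in $a\in J$ comes from continuous dependence of the spectral data of $\lc_{s,\omega,a}$ on the parameter $a$ and the compactness of $J$. Gluing all four pieces by uniqueness of analytic continuation yields the desired meromorphic function with its unique simple pole.

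\textbf{Main obstacle.} The delicate point is ruling out additional poles on the line $\Re s=\gamma(a)$ for $\omega\neq 0$. An extra pole there would force $\lambda_{s,\omega,a}=1$ for some $\omega\neq 0$ and some $s$ with $\Re s=\gamma(a)$; the standard Perron--Frobenius argument (cf.\ \cite{PP}) would then force $g-a\tau$ to be cohomologous to a function taking values in $\frac{2\pi}{\omega}\Z$ plus an imaginary constant, contradicting $\sigma$-independence of $\tau$ and $g$. This $\sigma$-independence is exactly what flow-independence of $G$ and $\vt_t$ guarantees, as recalled after Definition 1.2. A secondary technical issue is the compatibility of the two different $\|\cdot\|_{\beta,b}$-norms (with $b=\Im s$ versus $b=\Im s+a\omega$) in the overlap of Regions I and II; this is handled by a straightforward comparison of seminorms using that $a$ is bounded and that in the overlap both $|b|$'s are comparable to $\max(|\Im s|,|\omega|)$.
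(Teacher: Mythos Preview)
Your proposal is correct and follows essentially the same approach as the paper: the paper's Cases~1, 2, 3a, 3b correspond respectively to your bounded box, the $|\omega|$-large half of Region~II, Region~I, and the $|\Im s|$-large half of Region~II, with the same choice of $\lc_{1,s,\omega,a}$ versus $\lc_{2,s,\omega,a}$ on each piece. Two minor remarks: your defining inequality for Region~II should read $|\omega|\leq B_2|\Im s+a\omega|$ rather than $|\Im s|\leq B_2|\Im s+a\omega|$ (this is what $(4.4)$ requires, and your subsequent text ``$|\Im s+a\omega|\asymp|\omega|$'' shows you have the right condition in mind); and the ``compatibility of norms'' worry is a non-issue, since the series $(3.2)$ is estimated pointwise in $(s,\omega)$ and the two norm-families are used in disjoint arguments, not glued.
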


\begin{proof} 

 We suppose below that $|\Re s -\gamma(a) |\leq a_0$, since for $\Re s > \gamma(a) + a_0$ the result follows from Proposition 4.1, (i).  Consider three cases.\\

{\bf Case 1.} $(\Im z, \omega) \in D_M = \{|\Im z| \leq M, \:|\omega| \leq \frac{1}{\eta}M\}.$\\
   For $(\Im s, \omega) \in B_{\ep_0} = \{|\Im z| <\ep_0,|\omega|< \ep_0\}$ the result follows from Proposition 4.1, (ii). So assume that
$(\Im z, \omega) \in D_M \setminus B_{\ep_0}.$

In this situation we may apply the statement (iii) of Proposition 4.1. For reader's convenience we present a proof. Let $(s_0,\: w_0) $ with $(\Im s_0,\omega_0) \in D_M\setminus B_{\ep_0}$ be fixed. 
Assume first that $\Im p(s_0, \omega_0, a)$ is cohomologous to $c + 2\pi Q$ with an integer-valued function $Q \in C(U; \Z)$ and a constant $c \in [0, 2 \pi ).$ Then we define the pressure 
$\Pr(p(s_0, \omega_0, a)) = \Pr(f_a) + c$ and we extend the pressure  in a small neighbourhood of $(s_0,\omega_0)$. Since $G$ and $\sigma^{\tau}_t$ are flow independent, the functions $g$ 
and $\tau$ are $\sigma$-independent. If we have $c = 0$, from the fact that $\Im s_0 \tau + \omega_0 g$ is cohomologous to a function in $C(U; 2 \pi \Z),$ we deduce $\Im s_0 = \omega_0 = 0$ which 
is impossible. Thus we have $c \neq 0.$ Consequently, the operator $\lc_{s_0, \omega_0, a}$ has an eigenvalue $ e^{\i c}$. Then there exists a neighborhood $U_2$ of $(s_0, \omega_0)$ such that for $(s, \omega) \in U_2$ 
we have $\Pr(p(s, \omega, a)) \neq 0$ and for $(s, \omega) \in U_2$ we have an analytic extension of $Z(s, \omega, a)$ given by
$$Z(s, \omega, a) =  \Bigl[\frac{B_4(s, \omega,a)}{1 - e^{\Pr(p(s, \omega, a))}} + J_{2}(s, \omega, a)\Bigr]$$
with a function $J_{2}(s, \omega, a)$ analytic with respect to $s$ for $(s, \omega) \in U_2.$ Second, let $\Im p(s_0, \omega_0, a)$ be not cohomologous to $c + 2 \pi Q$. Then the spectral radius of 
$\lc_{s_0, \omega_0, a}$ is strictly less than 1 and this will be the case for $(s, \omega)$ is a small neighbourhood $U_3$ of $(s_0, \omega_0).$ Therefore it is easy to see that the series in (\ref{eq:3.2}) is 
absolutely convergent and we obtain again an analytic extension. Covering the compact set $D_M \setminus B_{\ep_0}$ by a finite number of neighbourhoods, we may choose $\mu_0 > 0$ small so that for 
$\gamma(a) - \mu_0 \leq \Re s \leq \gamma(a)$, $(\Im s,\omega)\in D_M \setminus B_{\ep_0}$, we have an analytic extension of $Z(s, \omega, a)$ in (\ref{eq:4.5}).\\

{\bf Case 2.} $|\Im s|\leq M,\:|\omega| > \frac{1}{\eta} M > \frac{2}{g_0}M.$\\

 Notice  that $\frac{2M}{g_0} \geq \frac{2M}{a}.$ We consider the operator $\lc_{2, s,\omega, a}$ and observe that
$$|\Im s + a \omega| \geq a|\omega| - |\Im s| \geq M$$
and also 
$$|\Im s + a \omega| \geq \frac{a}{2}|\omega| + \frac{a}{2}|\omega| - |\Im s| \geq \frac{a}{2}|\omega|.$$
Hence $|\omega| \leq \frac{2}{a} |\Im s + a \omega|\leq \frac{2}{g_0}|\Im s + a \omega|.$

To apply the estimate (\ref{eq:4.4}) with $B_2 = \frac{2}{\delta_0}$ to the series in (\ref{eq:3.2}), we must estimate the norm
$$\|h B_2(s, \omega, a, .)\|_{\beta, \Im s + a\omega}$$
uniformly with respect to $\omega \in \R.$ The norm $\|B_2(s,\omega, a,.)\|_{\beta}$ has been estimated above. Next one gets
$$\Bigl|\frac{1}{\Im s + a \omega}\Bigr| \leq \frac{1}{M}$$
and we deduce the needed estimate.

Now the series in (\ref{eq:3.2}) is absolutely convergent and we obtain an analytic extension of $Z(s, w, a)$ for $|\Re s - \gamma(a) | \leq a_0,\: |\Im s | \geq M,\: |\omega| >\frac{1}{\eta}M$ as well as the  estimate
\begin{equation} \label{eq:3.7}
|Z(s, \omega, a)| \leq C_{\nu, B_2} |\Im s + a \omega|^{\nu} \leq C_{\nu, B_2, M}(1 + |\omega|)^{\nu}.
\end{equation}
  Decreasing, if is necessary, $\mu_0$ we obtain an analytic extension in (\ref{eq:4.5}).\\

 {\bf Case 3.} $|\Im s | > M.$ 

We consider two subcases:\\
 
{\bf Subcase 3a.} $|\omega|\leq \frac{1}{\eta} |\Im s|.$ We work with the operator $\lc_{1, s, \omega, a}$. For every $\nu > 0$ with $b = \Im s, \: B_1 = \frac{1}{\eta}$ and $|\Im s | \geq M$ one obtains from 
(\ref{eq:4.3}) the spectral estimates
\begin{equation} \label{eq:4.8}
\|\lc_{s,\omega, a}^m h\|_{\beta, b}\leq C(\nu, \eta)\rho^m |\Im s|^{\nu}\|h\|_{\beta, b},\:m \in \N.
\end{equation}
The series (\ref{eq:3.2}) is absolutely convergent  and we deduce
$$|Z(s,\omega, a)| \leq \tilde{C}(\nu,\eta) |\Im s|^{\nu}.$$

{\bf Subcase 3b.} $|\omega| > \frac{1}{\eta} |\Im s|.$
We work now with the operator $\lc_{2,s,\omega, a}$. In this case
$$|\Im s + a\omega| \geq a|\omega| - |\Im s| \geq (a- \eta) |\omega| \geq \frac{1}{2}g_0|\omega|.$$
Hence
$$|\omega| \leq \frac{2}{g_0} |\Im s + a \omega|.$$
On the other hand, for $|\omega |>\frac{1}{\eta}|\Im s| \geq \frac{1}{\eta}M $ we  have
$$|\Im s + a \omega| \geq \frac{g_0}{2} |\omega| \geq \frac{g_0}{2 \eta}M \geq M,
$$ 
because $2\eta <  g_0.$
Therefore, for any $\nu >0$ we can use (\ref{eq:4.4}) with $B_2 = \frac{2}{g_0}$ and obtain
\begin{eqnarray} \label{eq:4.9}
\|\lc_{s,\omega, a}^m h\|_{\beta, \Im s + a \omega}\leq D(\nu, g_0)\rho^m |\Im s + a \omega|^{\nu}\|h\|_{\beta, \Im s + a \omega} \nonumber \\
\leq D(\nu, g_0, M)\rho^m (1 + |\omega|)^{\nu}\|h\|_{\beta, \Im s + a \omega}, \:m \in \N.
\end{eqnarray}
These estimates lead again to (\ref{eq:3.7}) and the proof is complete.
\end{proof}

\section{Tauberian theorem}   
\renewcommand{\theequation}{\arabic{section}.\arabic{equation}}
\setcounter{equation}{0}

In this section we prove a Tauberian theorem for a sequence of functions $\{g_n(t)\}_{n \in \N}$ similar to that in \cite{KS} (see also Proposition 6 (i) in \cite{W}). The novelty here is 
that the leading terms contain a factor $A_n \geq e^{-\mu n}$ 
with $\mu > 0$ which can converge to 0 exponentially fast and the remainders must be smaller that this leading term. Moreover, we have two limits $n\to \infty$ and $t \to \infty$ and this creates 
new difficulties. Under some assumptions on the Laplace transform of $g_n(t)$, stronger that those in \cite{KS}, we obtain an asymptotic  for $t \geq n-q$ and $n \to \infty.$

\begin{prop}   Let $g_n(t),\: n \in \N,$ be monotonic nondecreasing functions defined for $t \in [0, \infty)$ such that $g_n(0) = 0, \: n \in \N$. Assume that for any $n \in \N$ the Laplace transform
$$F_n(s) = \int_0^{\infty} e^{-s t} g_n(t)dt$$
is analytic for $\Re s > 1$. Assume that there exist $\mu_0 >0$ and $M > 0, C_0 > 0, C_1 > 0, \delta_0 >0$ such that $F_n(s)$ has a representation
\begin{equation} \label{eq:5.1}
F_n(s) =\frac{A_n}{\sqrt{s - 1}} + A_n K_n(s) + L_n(s),
\end{equation}
where $C_0 e^{-\mu n} \leq A_n \leq C_1, \: 0 < \mu \leq \mu_0/4, \: \forall n \in \N$, $K_n(s)$ are analytic  functions for $\Re s > 1$ and for $t \in \R$ and $s = 1 + \delta + \i t, 0 < \delta \ll 1,$ 
$K_n(s)$ has a limit $k_n(1+\i t) \in W^{1,1}_{loc}(\R)$ $($functions which are locally integrable and have locally integrable derivatives$)$ almost everywhere on $\R$ as $\delta \searrow 0$ and 
$|k_n(1+ \delta + \i t)| \leq k_0(t),$ where $k_0(t)$ is locally integrable, while
$L_n(s)$ has an analytic continuation for  $1- \mu_0 \leq \Re s \leq 1 +\delta_0.$ 
   Moreover, assume that the functions $K_n(s)$ have analytic continuations to $1 - \mu_0 \leq \Re s \leq 1 + \delta_0,\: |\Im s | \geq M$, 
and for every compact set $D \subset \R$ uniformly with respect to $n$ we have
$$\|k_n'(1 + \i t)\|_{L^1(D)} \leq C(D),\:\forall n \in \N.$$

Next assume that for any $0 < \nu < 1$ with a constant $B(\nu) > 0$ independent on $n$ we have for any $n \in \N$ the estimates
\begin{equation} \label{eq:5.2}
\Bigl|\frac{d^k}{ds^k}L_n(s)\Bigr|\leq B(\nu)( 1 +|\Im s|^{\nu}), \:1 - \mu_0 \leq  \Re s \leq 1 +\delta_0,\: k = 0, 1,
\end{equation}
\begin{equation}\label{eq:5.3}
\Bigl|\frac{d^k}{ds^k}K_n(s)\Bigr| \leq B(\nu)(1 + |\Im s|^{\nu}), \: 1 - \mu_0 \leq  \Re s \leq 1 +\delta_0,\:k = 0, 1,\ |\Im s| \geq M.
\end{equation}
 Then for fixed $ q \geq 0$ and for any $0 <\eta \ll 1$ there exists $n_0(\eta) \in \N$ such that for $t \geq  n- q$ and $n \geq n_0(\eta) + q$ we have
\begin{equation} \label{eq:5.4}
\frac{A_n e^{t}}{\sqrt{\pi t}}(1 -\eta) \leq g_n(t) \leq \frac{A_n e^{t}}{\sqrt{\pi t}}( 1 +\eta).
\end{equation}
\end{prop}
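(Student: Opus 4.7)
The plan is to invert the Laplace representation \eqref{eq:5.1} by a contour integral and show that only the branch point singularity $A_n/\sqrt{s-1}$ contributes the leading asymptotic, since a direct computation $\int_0^\infty e^{-(s-1)t}(\pi t)^{-1/2}\,dt = (s-1)^{-1/2}$ for $\Re s > 1$ identifies $A_n e^t/\sqrt{\pi t}$ as the inverse Laplace transform of that singular piece. The terms $A_n K_n(s)$ and $L_n(s)$ should contribute only lower-order errors which, thanks to the quantitative bounds \eqref{eq:5.2}--\eqref{eq:5.3}, will be of size $O(e^{(1-\mu_0)t}/t) + O(A_n/t)$; since $A_n \geq C_0 e^{-\mu n}$ with $\mu \leq \mu_0/4$ and $t \geq n-q$, the first is $O(e^{-(3\mu_0/4)t}\cdot e^{\mu q}\sqrt{t})$ relative to the main term, which is negligible.

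Because $g_n$ is monotonic, I would begin by sandwiching $g_n(t)$ between the averages
\[
\frac{1}{h}\int_{t-h}^t g_n(u)\,du \;\leq\; g_n(t) \;\leq\; \frac{1}{h}\int_t^{t+h} g_n(u)\,du,
\]
with $h = h(\eta) > 0$ small and independent of $n$, chosen so that the two sides agree with the main term $A_n e^t/\sqrt{\pi t}$ up to a factor $1\pm\eta/2$. The Laplace transform of each average equals $F_n(s)(e^{\pm sh}-1)/(\pm sh)$, which supplies an extra decay factor $O(1/|s|)$ on vertical lines at infinity. After this regularisation the Mellin--Barnes inversion becomes absolutely convergent once the derivative estimates in \eqref{eq:5.2}--\eqref{eq:5.3} are invoked, and I would deform the contour from $\Re s = 1+\delta$ to a composite path consisting of a small clockwise Hankel loop $\mathcal{H}$ around the branch cut emanating from $s=1$, a vertical segment on $\Re s = 1$ for $|\Im s| \leq M$ (using the $W^{1,1}$ boundary value $k_n(1+it)$ for the $K_n$ piece), and a vertical line $\Re s = 1 - \mu_0$ for $|\Im s| \geq M$, where both $K_n$ and $L_n$ continue analytically.

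The Hankel loop evaluates the branch singularity and yields, after undoing the averaging, the explicit main term $A_n e^t/\sqrt{\pi t}$. On the vertical line $\Re s = 1 - \mu_0$, a single integration by parts against $e^{st}$ introduces a factor $1/t$ and replaces $L_n$ by $L_n'$; combined with the smoothing factor $1/|s|$ and the bound $|L_n'(s)| \leq B(\nu)(1+|\Im s|^\nu)$ with $\nu < 1$, the resulting integrand is absolutely integrable and the whole contribution is $O(e^{(1-\mu_0)t}/t)$ uniformly in $n$. The $A_n K_n$ contribution is handled by the analogous step for $|\Im s| \geq M$, while for $|\Im s| \leq M$ the uniform $L^1(D)$ bound on $k_n'(1+it)$ yields an $O(A_n/t)$ term, smaller than the main term by a factor $\sqrt{t}\,e^{-t}$. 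Combining these errors with the comparison $e^{(1-\mu_0)t}/A_n \leq e^{\mu q}\,e^{-(3\mu_0/4)t}$, valid for $t \geq n-q$, the errors are $o(A_n e^t/\sqrt{t})$ as $n \to \infty$, and the sandwich then implies \eqref{eq:5.4} upon choosing $n_0(\eta)$ so that the cumulative error is at most $\eta/2$.

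The main obstacle is precisely the uniformity in $n$: the target main term is exponentially small, so every error bound must beat $e^{-\mu n}$ in absolute value. The strict inequality $\mu < \mu_0$ gives exactly the margin needed through the contour shift to $\Re s = 1 - \mu_0$, but realising this requires that the smoothing scale $h$, the test kernel, and the integration by parts all produce constants independent of $n$. The most delicate of these is the treatment of the boundary value $k_n(1+it)$ on the segment $|\Im s|\leq M$: there the function is only $W^{1,1}_{\mathrm{loc}}$ rather than analytic across $\Re s = 1$, so the integration by parts must be justified in the distributional sense using the uniform $L^1$ bound on $k_n'$, and the resulting $1/t$ decay has to be shown to hold uniformly in $n$.
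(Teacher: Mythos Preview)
Your approach is valid and takes a genuinely different route from the paper's.

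The paper follows the Wiener--Ikehara scheme as adapted by Katsuda--Sunada: rather than inverting the Laplace transform directly, it tests $\mathcal{K}_{\epsilon,n}(t)=F_n(1+\epsilon+it)-A_n/\sqrt{\epsilon+it}$ against the Fej\'er kernel $(1-|t|/2\lambda)_+$ on a \emph{finite} window $[-2\lambda_n,2\lambda_n]$ and lets $\epsilon\searrow 0$. After the substitution $x=v^2$ this yields an identity relating a finite-frequency average of $\mathcal{K}_{0,n}$ to the convolution $\int H_n(y-w/\lambda_n)\,\frac{\sin^2 w}{w^2}\,\frac{\sqrt{y}}{\sqrt{y-w/\lambda_n}}\,dw$, with $H_n(y)=\sqrt{y}\,g_n(y)e^{-y}$. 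The quantitative heart (their Lemma~5.3) is that with the \emph{growing} window $\lambda_n=e^{\mu_0 n/2}$ the Fej\'er integral of $\mathcal{K}_{0,n}$ is $\leq A_n\eta$; this is proved by rewriting it as a contour integral on $\Re s=1$ and shifting to a rectangle with left side on $\Re s=1-\mu_0$, where the gain $e^{-\mu_0 y}$ beats the loss $\lambda_n^{1+\nu}$ (they take $\nu=1/3$). Monotonicity is then used (Lemma~5.5) to pass from the Fej\'er-smoothed estimate to the pointwise one via the subwindow $[-\sqrt{\lambda_n},\sqrt{\lambda_n}]$.

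Your proposal instead smooths in the \emph{time} variable with a fixed step $h=h(\eta)$ and runs a Perron-type inversion on the whole line. This is conceptually cleaner and avoids the $v^2$ substitution; it trades the finite but growing Fej\'er window for an infinite contour on which the factor $(e^{sh}-1)/(sh)$ restores integrability. Both arguments shift to $\Re s=1-\mu_0$ to beat $A_n\geq C_0e^{-\mu_0 n/4}$, but the paper only shifts over the bounded interval $|\Im s|\leq 2\lambda_n$, whereas you shift the entire tail $|\Im s|\geq M$.

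Two corrections. First, the $K_n$ contribution on the segment $\Re s=1$, $|\Im s|\leq M$ is $O(A_n e^{t}/t)$ after one integration by parts, not $O(A_n/t)$; the relative error is therefore $O(1/\sqrt{t})$, which is what you actually need. Second, the single smoothing factor $(e^{sh}-1)/(sh)$ only gives $O(1/|s|)$ decay, so absolute convergence of the initial inversion on some line $\Re s=c\in(1,1+\delta_0]$ does not follow from a crude bound on $F_n$; you must decompose $F_n$ first and invoke \eqref{eq:5.2}--\eqref{eq:5.3} for $|\Im s|\geq M$. For $|\Im s|\leq M$ on that line you need a uniform bound on $K_n$, which is not stated directly but follows from the uniform $W^{1,1}$ control of $k_n(1+it)$ (hence uniform $L^\infty$ on $[-M,M]$) together with \eqref{eq:5.3} on $|\Im s|=M$ and a maximum-modulus argument in the rectangle. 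This is routine but should be said explicitly.
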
 
\begin{rem}  Notice that if the estimates $(\ref{eq:5.2})$, $(\ref{eq:5.3})$ are satisfied for $\mu_0 > 0$ and $k = 0,$ then fixing  numbers $0 < \mu_1 < \mu_0, \: 0 < \delta_1 < \delta_0,$ we obtain the 
estimates $(\ref{eq:5.2})$ $($resp.$(\ref{eq:5.3}) )$ for $1 - \mu_1 \leq \Re \leq 1 + \delta_1,$ $($resp. for $1 - \mu_1 \leq \Re s \leq 1 + \delta_1, \: |\Im s| \geq M)$ with another constants
$B_1(\nu)$ by applying the Cauchy formula for the first derivative of the analytic functions $K_n(s)$ and $L_n(s).$ In the proof we use only the estimates with $\nu = 1/3$, but if we change the relation 
between $\mu, \mu_0$ and $t \geq \beta(n)$, we need estimates with different $\nu$. In Section {\rm 6} we establish $(\ref{eq:5.2}),\:(\ref{eq:5.3})$.
\end{rem}
{\it Proof.}  We follow the proof in \cite{KS} with a more precise analysis concerning the dependence of $n$. If we replace the function $g_n(t)$ by $\tilde{g}_n(t)=g_n(t)\sqrt{t}$, one obtains
$$F_n(s) = \int_0^{\infty} \frac{1}{\sqrt{t}}e^{-st} \tilde{g}_n(t)dt.$$

 For simplicity of the notations below we will denote  $\tilde{g}_n(x)$ by $g_n(x)$.  Now introduce the function $H_n(x) = g_n(x) e^{-x}$ and for $s = 1 + \ep + \i t$ define
$$\kc_{\ep, n}(t) = F_n(s) - \frac{A_n}{\sqrt{s-1}}.$$
After a change of variable $x = v^2$, one obtains
$$\kc_{\ep, n}(t) =2 \int_0^{\infty} e^{-(s-1)v^2} \Bigl(H_n(v^2) - \frac{A_n}{\sqrt{\pi}}\Bigr) dv.$$
Consequently,
$$\kc_{\ep, n}(t) =\lim_{\xi \to \infty} 2\int_0^{\xi} \Bigl(H_n(v^2)-\frac{A_n}{\sqrt{\pi}}\Bigr) e^{-(s-1)v^2} dv$$
and for fixed $n$ and fixed $\ep$ this limit is uniform for $|t| \leq 2 \lambda$. We multiply $\kc_{\ep, n}(t)$ by $\sqrt{y} e^{\i t y} \Bigl(1 - \frac{|t|}{ 2\lambda}\Bigr)$ and integrate over $t$ in $[-2\lambda, 2\lambda].$ Thus
$$\int_{- 2\lambda}^{2 \lambda} \sqrt{y} \Bigl(1 - \frac{|t|}{2 \lambda}\Bigr) \kc_{\ep,n}(t)e^{\i y t} dt$$
$$= \lim_{\xi \to \infty} 2\int_{-2\lambda}^{2 \lambda} \sqrt{y} e^{\i ty}\Bigl(1 - \frac{|t|}{2\lambda}\Bigr)\Bigl(\int_0^{\xi} \Bigl(H_n(v^2)-\frac{A_n}{\sqrt{\pi}}\Bigr) e^{-\ep v^2- \i t v^2} dv\Bigr) dt.$$

As in \cite{KS}, we interchange the limit $\xi \to \infty$ and the integration and write the right hand side of the last term as$$2 \int_0^{\infty} \Bigl(H_n(v^2)-\frac{A_n}{\sqrt{\pi}}\Bigr)e^{-\ep v^2} \Bigl(\int_{-2}^{2} \lambda\sqrt{y} e^{\i \lambda(y - v^2) u}\Bigl(1 - \frac{|u|}{2}\Bigr)du\Bigr) dv.$$

We change the variable $v = \sqrt{y -\frac{w}{\lambda}}$ and the last term becomes
$$\int_{-\infty}^{\lambda y} \Bigl(H_n\Bigl(y - \frac{w}{\lambda}\Bigr) - \frac{A_n}{\sqrt{\pi}}\Bigr) e^{-\ep(y - (w/\lambda)}\sqrt{y} \Bigr(\int_{-2}^{2} \Bigr( 1 - \frac{|u|}{2}\Bigr) e^{\i w u} du\Bigr) \frac{ dw}{\sqrt{y - (w/\lambda)}}$$
$$ =2\int_{-\infty}^{\lambda y} H_n\Bigl(y - \frac{w}{\lambda}\Bigr)e^{-\ep(y - w/\lambda)} \frac{\sin^2 w}{w^2} \frac{\sqrt{y}}{\sqrt{y - w/\lambda}} dw $$
$$- \frac{2 A_n}{\sqrt{\pi}}\int_{-\infty}^{\lambda y} \frac{\sin^2 w}{w^2}\frac{\sqrt{y}}{\sqrt{y - w/\lambda}}e^{-\ep(y - w/\lambda)} dw.$$
Now, as in \cite{KS}, we take the limit $\ep \searrow 0$ and set $\kc_{0, n}(t) = \lim_{\ep \searrow 0} \kc_{\ep, n}(t)$. By the Lebesgue convergence theorem and  Sub-Lemma 4.5 in \cite{KS} we obtain
\begin{eqnarray} \label{eq:5.5}
\lim_{y \to \infty}\frac{1}{2}\int_{-2\lambda}^{2 \lambda} \kc_{0, n}(t) \Bigr( 1 - \frac{|t|}{2\lambda}\Bigr) \sqrt{y} e^{\i t y} dy + A_n \sqrt{\pi}\nonumber \\
= \lim_{y \to \infty}\int_{-\infty}^{\lambda y} H_n\Bigl(y - \frac{w}{\lambda}\Bigr) 
\frac{\sin^2 w}{w^2} \frac{\sqrt{y}}{\sqrt{y - w/\lambda}} dw.
\end{eqnarray}
By using an integration by parts and the fact that $\kc_{0, n}(t) \in W^{1, 1}_{loc}(\R)$, we may deduce that for every fixed $\lambda > 1$ the first term on the left hand side of (\ref{eq:5.5}) has a limit 0 as $y \to \infty.$ 
However, for every fixed $0 < \eta < 1$ and fixed $\lambda > 1$ if we wish to arrange the inequality
\begin{equation}\label{eq:5.6}
\Bigr|\frac{1}{2}\int_{-2\lambda}^{2 \lambda} \kc_{0, n}(t) \Bigr( 1 - \frac{|t|}{2\lambda}\Bigr) \sqrt{y} e^{\i t y} dt\Bigr| \leq A_n \eta,
\end{equation}
we must take $y \geq Y(\eta, \lambda, n)$ and we may have $Y(\eta, \lambda, n) \to \infty$ as $n \to \infty, \lambda \to \infty.$\\

By using the representation (\ref{eq:5.1}) and the estimates (\ref{eq:5.2}), (\ref{eq:5.3}), we will prove a more precise result.

\begin{lem} Let $q \geq  0$ be fixed and let $y \geq n- q,\:\lambda =\lambda_n = e^{\frac{1}{2}\mu_0 n}$.Then  for any $\eta > 0$ there exists $n_0(\eta) \in \N$ such that for all $n \geq n_0(\eta) + q$ 
we have $(\ref{eq:5.6}).$
\end{lem}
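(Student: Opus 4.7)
Using the representation \eqref{eq:5.1}, decompose $\kc_{0,n}(t) = A_n k_n(1+\i t) + L_n(1+\i t)$ a.e., so the integral in \eqref{eq:5.6} splits as $\tfrac{1}{2}(I_n^L + I_n^K)$, where
$$I_n^L = \sqrt{y}\int_{-2\lambda}^{2\lambda} L_n(1+\i t)\Bigl(1 - \tfrac{|t|}{2\lambda}\Bigr) e^{\i t y}\, dt, \quad I_n^K = A_n\sqrt{y}\int_{-2\lambda}^{2\lambda} k_n(1+\i t)\Bigl(1 - \tfrac{|t|}{2\lambda}\Bigr) e^{\i t y}\, dt.$$
The plan is to bound each piece by deforming contours into the analyticity strip $1 - \mu_0 \leq \Re s \leq 1 + \delta_0$, exploiting the polynomial growth \eqref{eq:5.2}--\eqref{eq:5.3}. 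The main obstacle is that the Fej\'er-type weight $(1 - |t|/(2\lambda))$ is not analytic at the corner $t = 0$; this is handled by splitting each integral at $t = 0$ and extending the weight to the two analytic functions $\phi_+(s) = 1 + \i(s-1)/(2\lambda)$ on $t \in [0, 2\lambda]$ and $\phi_-(s) = 1 - \i(s-1)/(2\lambda)$ on $t \in [-2\lambda, 0]$, each of which agrees with $(1 - |t|/(2\lambda))$ on $\Re s = 1$.

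For $I_n^L$, apply Cauchy's theorem to $L_n(s)\phi_\pm(s) e^{(s-1)y}$ on the rectangles $[1-\mu_0, 1]\times [0, 2\lambda]$ and $[1-\mu_0, 1]\times[-2\lambda, 0]$. The remote horizontal sides at $\Im s = \pm 2\lambda$ contribute $O(\mu_0^2 \lambda^{\nu-1})$ because $|\phi_\pm| = O(\mu_0/\lambda)$ there while $|L_n| \leq C\lambda^\nu$; the shifted verticals at $\Re s = 1-\mu_0$ contribute $O(e^{-\mu_0 y}\lambda^{1+\nu})$ from the factor $|e^{(s-1)y}| = e^{-\mu_0 y}$. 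The crucial point is that the two remaining horizontal sides at $\Im s = 0$ traverse the segment $[1-\mu_0, 1]$ in opposite directions with weights $\phi_+$ and $\phi_-$, and $\phi_+(\sigma) - \phi_-(\sigma) = \i(\sigma-1)/\lambda$, so their combined contribution is
$$\int_{1-\mu_0}^1 L_n(\sigma)\,\frac{\i(\sigma-1)}{\lambda}\, e^{(\sigma-1)y}\, d\sigma = O\Bigl(\frac{1}{\lambda y^2}\Bigr)$$
after estimating $\int_0^{\mu_0} u e^{-uy}\,du \leq 1/y^2$. With $\lambda = e^{\mu_0 n/2}$, $y \geq n-q$, and the choice $\nu = 1/3$ from the Remark, multiplication by $\sqrt{y}$ shows each of the three contributions is of order $e^{-(1-\nu)\mu_0 n/2}\,\mathrm{poly}(n) = O(e^{-\mu_0 n/3}\,\mathrm{poly}(n))$, which is $o(A_n)$ because $A_n \geq C_0 e^{-\mu_0 n/4}$ and $1/3 > 1/4$.

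For $I_n^K$, split at $|t| = M$. On $|t| \leq M$, where $K_n$ is not assumed to extend analytically, integrate by parts using $\|k_n'\|_{L^1([-M,M])} \leq C(M)$, the trace bound $|k_n(1 \pm \i M)| \leq k_0(M)$ at regular points (perturbing $M$ slightly if needed), and the vanishing of the weight at $\pm 2\lambda$, yielding a bound $A_n C(M)/\sqrt{y}$. On $M \leq |t| \leq 2\lambda$, shift the contour in each of the two strips to $\Re s = 1 - \mu_0$ using \eqref{eq:5.3}: the horizontal pieces at $|\Im s| = 2\lambda$ give $A_n \sqrt{y}\cdot O(\lambda^{\nu-1})$, the shifted verticals give $A_n \sqrt{y}\cdot O(e^{-\mu_0 y}\lambda^{1+\nu})$, and the horizontal pieces at $|\Im s| = M$ give $A_n\sqrt{y}\cdot O(1/y) = A_n \cdot O(1/\sqrt{y})$---no cancellation is needed here, since the prefactor $A_n$ already suffices. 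Collecting all bounds yields $|I_n^L| + |I_n^K| \leq 2 A_n \eta$ for $n \geq n_0(\eta) + q$ with $n_0(\eta)$ large enough, proving \eqref{eq:5.6}. The key difficulty, namely the nonanalytic corner of the weight at $t = 0$, is what forces the cancellation $\phi_+ - \phi_- = \i(s-1)/\lambda$ between the two "inner" contours, without which the $L_n$ piece (lacking any $A_n$ prefactor) would produce a $1/\sqrt{y}$ error too large to absorb into $A_n \eta$.
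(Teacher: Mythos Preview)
Your overall strategy matches the paper's: split $\kc_{0,n}=A_n k_n+L_n$, extend the Fej\'er weight to the analytic functions $\phi_\pm$, and deform each half-integral into the strip $1-\mu_0\le\Re s$. The cancellation $\phi_+-\phi_-=\i(\sigma-1)/\lambda$ on the inner horizontal at $\Im s=0$ is exactly the mechanism the paper uses (there written as $\omega_{1,3}+\omega_{2,3}$).

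There is, however, a genuine gap in your treatment of the \emph{remote} horizontal sides at $\Im s=\pm 2\lambda$. You bound $|e^{(s-1)y}|\le 1$ and obtain a contribution $O(\mu_0^2\lambda^{\nu-1})$, and then claim that after multiplying by $\sqrt{y}$ the result is $e^{-(1-\nu)\mu_0 n/2}\,\mathrm{poly}(n)$. This is only true if $\sqrt{y}$ is polynomially bounded in $n$, but the lemma must hold for \emph{all} $y\ge n-q$, and $y$ may be arbitrarily large. The resulting bound $\sqrt{y}\,\lambda^{\nu-1}$ is unbounded in $y$ and cannot be made $\le A_n\eta$. The same defect appears in your $I_n^K$ estimate at $|\Im s|=2\lambda$.

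The fix is the one you already use on the \emph{inner} horizontals: exploit the decay of $e^{(z-1)y}$ along the horizontal segment. On $\omega_{1,1}=\{z+2\i\lambda: 1-\mu_0\le z\le 1\}$ one has $|L_n|\le C\lambda^\nu$, $|\phi_+|\le \mu_0/(2\lambda)$, and
\[
\int_{1-\mu_0}^{1} e^{(z-1)y}\,dz\le \frac{1}{y},
\]
which yields the contribution $\sqrt{y}\cdot C\lambda^{\nu-1}\cdot y^{-1}=C\lambda^{\nu-1}/\sqrt{y}\le C e^{-(1-\nu)\mu_0 n/2}/\sqrt{n-q}$, now uniform in $y\ge n-q$. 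Equivalently (and this is what the paper does), integrate by parts in $z$ using $\frac{d}{dz}e^{(z-1)y}=y\,e^{(z-1)y}$; the boundary term at $z=1$ vanishes because $\phi_+(1+2\i\lambda)=0$, and the boundary term at $z=1-\mu_0$ carries the harmless factor $e^{-\mu_0 y}$. With this correction (and the analogous one for $I_n^K$), your argument goes through with $\nu=1/3$ exactly as you outlined.
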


\begin{proof}  First we treat the integral
$$J_n(y) = \int_{- 2\lambda_n}^{2 \lambda_n} L_n(1 + \i t) \Bigr( 1 - \frac{|t|}{2\lambda_n}\Bigr) \sqrt{y} e^{\i t y} dt,$$
where $L_n(1 + \i t) = \lim_{\delta \searrow 0} L_{n}(1 + \delta + \i t)$. We write this integral as follows
$$J_n(y) = -\int_{\gamma_1} \i L_n(s) \Bigl(1 - \frac{s-1}{2\i \lambda_n}\Bigr) \sqrt{y} e^{(s-1)y} ds$$
$$ -\int_{\gamma_2}\i L_n(s) \Bigl(1 + \frac{s-1}{2\i \lambda_n}\Bigr) \sqrt{y} e^{(s-1)y} ds,$$
where 
$$\gamma_1 = \{ s \in \C: s = 1 + \i t, \: 0 \leq t \leq 2 \lambda_n\},\: \gamma_2 =\{s \in \C:\: s = 1+ \i t, \: -2\lambda_n \leq t \leq 0\}.$$
Since $L_n(s)$ has an analytic continuation for $1 - \mu_0 \leq \Re s \leq 1+ \delta_0$, we have the equality 
$$\int_{\gamma_1} + \int_{\omega_{1,1}} + \int_{\omega_{1,2}} + \int_{\omega_{1,3}} = 0, $$
where the function under integration is $-\i L_n(s) \Bigl(1 - \frac{s-1}{2\i \lambda_n}\Bigr) \sqrt{y} e^{(s-1)y}$ and
$$\omega_{1, 1} = \{ s \in \C:\: s = z + 2 \i \lambda_n,\: 1- \mu_0 \leq z \leq 1\},$$
$$\omega_{1, 2}= \{ s \in \C:\: s = 1 - \mu_0 + \i t,\: 0 \leq t \leq 2 \lambda_n\},\:\omega_{1, 3} = \{s \in \R:\: 1- \mu_0 \leq s \leq 1\}$$
with suitable orientation (see Figure 1). The integral over $\omega_{1, 2}$ has the form
$$-e^{-\mu_0 y} \sqrt{y} \int_0^{2\lambda_n} L_n(1- \mu_0 + \i t) \Bigl(1 + \frac{\mu_0 - \i t}{2\i \lambda_n}\Bigr)e^{\i t y} dt.$$

\begin{figure}[htpb]
\begin{center}
\begin{picture}(0,0)%
\includegraphics{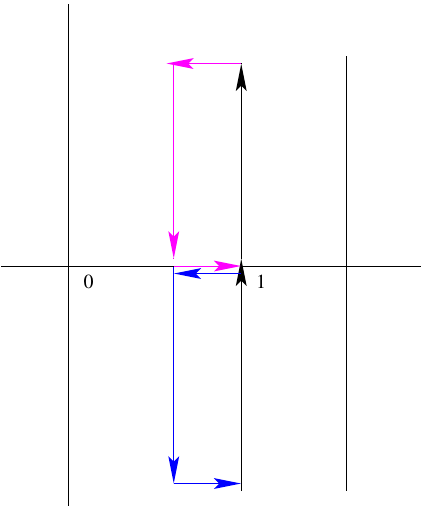}%
\end{picture}%
\setlength{\unitlength}{1895sp}%
\begingroup\makeatletter\ifx\SetFigFont\undefined%
\gdef\SetFigFont#1#2#3#4#5{%
  \reset@font\fontsize{#1}{#2pt}%
  \fontfamily{#3}\fontseries{#4}\fontshape{#5}%
  \selectfont}%
\fi\endgroup%
\begin{picture}(4224,5115)(4939,-5164)
\put(6226,-5086){\makebox(0,0)[lb]{\smash{{\SetFigFont{6}{7.2}{\familydefault}{\mddefault}{\updefault}{\color[rgb]{0,0,0}-2$\lambda_n$}%
}}}}
\put(7651,-1711){\makebox(0,0)[lb]{\smash{{\SetFigFont{6}{7.2}{\familydefault}{\mddefault}{\updefault}{\color[rgb]{0,0,0}$\gamma_1$}%
}}}}
\put(7576,-3811){\makebox(0,0)[lb]{\smash{{\SetFigFont{6}{7.2}{\familydefault}{\mddefault}{\updefault}{\color[rgb]{0,0,0}$\gamma_2$}%
}}}}
\put(6151,-2536){\makebox(0,0)[lb]{\smash{{\SetFigFont{6}{7.2}{\familydefault}{\mddefault}{\updefault}{\color[rgb]{0,0,0} 1-$\mu_0$}%
}}}}
\put(6226,-511){\makebox(0,0)[lb]{\smash{{\SetFigFont{6}{7.2}{\familydefault}{\mddefault}{\updefault}{\color[rgb]{0,0,0}2$\lambda_n$}%
}}}}
\end{picture}%

\caption{Contour of integration for $L_n(1 + \i t)$}
\end{center}
\end{figure}

We integrate by parts and one obtains
$$\frac{e^{-\mu_0 y} }{\i \sqrt{y}} \int_0^{2\lambda_n} \frac{d }{dt} \Bigl[L_n(1- \mu_0 + \i t) \Bigl(1 + \frac{\mu_0 - \i t}{2\i \lambda_n}\Bigr)\Bigr]e^{\i t y} dt$$
$$ - \frac{e^{-\mu_0 y} }{\i\sqrt{y}} \Bigl(\frac{\mu_0}{2 \i \lambda_n} L_n( 1- \mu_0 + 2 \i \lambda_n)e^{2 \i \lambda_n y} - \Bigl(1 + \frac{\mu_0}{2\i \lambda_n}\Bigr)L_n( 1 - \mu_0)\Bigr).$$

The function $L_n(1+ \i t)$ has an analytic continuation $L_n(z + \i t)$ for $1- \mu_0 \leq z \leq 1 + \delta_0$ and for any $0 < \nu < 1$ and any $n$ we have the estimate
$$\Bigl|\frac{d}{dt} L_n(z + \i t)\Bigr|\leq B(\nu) (1 + |t|^{\nu}),\: 1- \mu_0 \leq z \leq 1 + \delta_0.$$
Thus for $y \geq  n- q$ and large $n$ one gets
\begin{eqnarray*} \
\Bigl|\frac{e^{-\mu_0 y} }{\i \sqrt{y}} \int_0^{2\lambda_n} \frac{d }{dt} \Bigl[L_n(1- \mu_0 + \i t) \Bigl(1 + \frac{\mu_0 - \i t}{2\i \lambda_n}\Bigr)\Bigr]e^{\i t y} dt\Bigr|\nonumber \\
\leq C_{1,2}(\nu) \frac{e^{-\mu_0 y}}{\sqrt{y}}\lambda_n^{1 + \nu} 
\leq C_{1,2}(\nu) e^{-\mu_0 (n - q)} e^{\frac{1}{2}(1 + \nu) \mu_0 n}\nonumber \\ = C_{1,2}(\nu) e^{\mu_0 q}  e^{(-\frac{1}{2}+ \frac{1}{2} \nu) \mu_0 n}. 
\end{eqnarray*}
We choose $\nu =1/3$ and for the last term of the above inequality one obtains a bound ${\mathcal O}(e^{-\frac{1}{3}\mu_0 n}).$ Since $A_n \geq C_0 e^{-\frac{\mu_0}{4} n}$, we obtain a term $A_n o(n).$ The boundary terms are easily estimated and we get
\begin{equation}\label{eq:5.7}
|\int_{\omega_{1,2}}| \leq C_{1.2} e^{-\frac{1}{3} \mu_0 n}.
\end{equation}
Passing to the integral over $\omega_{1, 1}$, notice that for $ s \in \omega_{1,1}$ we have
$$1 - \Bigl(\frac{s-1}{2\i \lambda_n}\Bigr) = \frac{1 -z}{2 \i \lambda_n},\: |e^{(s-1) y}| \leq e^{(z-1)y} \leq 1, \: \Re s = z.$$
We integrate by parts with respect to $z$ and deduce
$$|\int_{\omega_{1,1}}| \leq A_{1,1}(\nu)\frac{ e^{-\mu_0 y}}{\sqrt{y}}\frac{(1 + \lambda_n^{\nu})}{\lambda_n}+  \frac{1}{2\sqrt{y}\lambda_n}\int_{1-\mu_0}^1 \Bigl|\frac{d}{dz}\Bigl[(1- z)L_n(z + 2\i \lambda_n)\Bigr]\Bigr| dz .$$  
Therefore, applying (\ref{eq:5.2}) for the second term in the right-hand-side, one obtains
\begin{equation} \label{eq:5.8}
|\int_{\omega_{1,1}}| \leq \frac{C_{1,1}(\nu)}{\sqrt{n}} e^{-\frac{1}{2}(1- \nu)\mu_0 n}  \leq C_{1, 1} e^{-\frac{1}{3}\mu_0n},
\end{equation}
choosing $\nu = 1/3.$
Before treating the integral over $\omega_{1, 3}$, consider the equality
$$\int_{\gamma_2} + \int_{\omega_{2,1}} + \int_{\omega_{2,2}} + \int_{\omega_{2,3}} = 0, $$
where the function under integration is $-\i L_n(s) \Bigl( 1 +\frac{s-1}{2\i \lambda_n}\Bigr) \sqrt{y}e^{(s-1)y}$ and
$$\omega_{2,1} = \{ s \in \C:\: s = z - 2 \i \lambda_n,\: 1- \mu_0 \leq z \leq 1\},$$
$$\omega_{2, 2}= \{ s \in \C:\: s = 1 - \mu_0 + \i t,\: -2 \lambda_n \leq t \leq 0\},\:\omega_{2, 3} = \{s \in \R:\: 1- \mu_0 \leq s \leq 1\}$$
with suitable orientation (see Figure 1). In particular, the curves $\omega_{1,3}$ and $\omega_{2,3}$ coincide, but they have inverse orientations.
The analysis of $\int_{\omega_{2,2}}$ is completely similar and one obtains (\ref{eq:5.7}). For $ s \in\omega_{2,1}$ we have
$$1 + \Bigl(\frac{s-1}{2\i \lambda_n}\Bigr) = \frac{z -1}{2 \i \lambda_n},\: |e^{(s-1) y}| \leq e^{(z-1)y} \leq 1$$
and as above one has (\ref{eq:5.8}). Now we take the sum of the integrals over $\omega_{1,3}$ and $\omega_{2,3}$ and we are going to estimate the integral 
$$\frac{1}{\lambda_n} \int_{1- \mu_0}^{1}  (z-1) \sqrt{y} L_n(z) e^{(z-1)y}dz.$$
We integrate by parts and the analysis is reduced to the integral
$$\frac{1}{\sqrt{y}\lambda_n} \int_{1- \mu_0}^1 e^{(z-1)y} \frac{d}{dz}\Bigl( (z-1)L_n(z)\Bigr) dz$$
which can be estimated by $C_{1,3}e^{-\frac{1}{2}\mu_0n}.$\\

Next we pass to the analysis of the integral
$$I_{n}(y) = \int_{- 2\lambda_n}^{2 \lambda} k_{n}(1 + \i t) \Bigr( 1 - \frac{|t|}{2\lambda_n}\Bigr) \sqrt{y} e^{\i t y} dt,$$
where 
$$k_{n}(1 + \i t) =  \lim_{\delta \searrow 0} K_{n}(1+\delta + \i t).$$
 Our purpose is to show that for $y \geq  n- q$ and $\lambda_n = e^{-\frac{1}{2} \mu_0 n}$ for any $0 < \eta \ll 1$ there exists $n_0(\eta)$ such that for $n \geq n_0(\eta) + q$ we have
$|I_{n}(y)| <\eta.$ We integrate by parts with respect to $t$ and deduce
$$I_{n}(y) = -\frac{1}{\i\sqrt{y}} \int_{-2\lambda_n}^{2\lambda_n} \frac{d}{dt} \Bigl[\Bigl(1 - \frac{|t|}{2\lambda_n}\Bigr)k_n(1 + \i t)\Bigr] e^{\i ty} dt $$
$$= \frac{\i}{\sqrt{y}}\Bigl( \int_{-M- 1}^{M +1}
+ \int_{-2 \lambda_n}^{-M- 1}+ \int_{M + 1}^{ 2 \lambda_n}\Bigr).$$
The integral over $[-M-1, M+ 1]$ can be estimated taking $n$ large by using the factor $\frac{1}{\sqrt{y}}$ and the fact that by hypothesis
$$\|k_{n}(1 + \i t)\|_{L^1(-M-1, M+1)}+\|k'_n(1 + \i t)\|_{L^1(-M - 1, M+ 1)} \leq C(M), \: \forall n \in \N.$$ 

\begin{figure}[htpb]
\begin{center}
\begin{picture}(0,0)%
\includegraphics{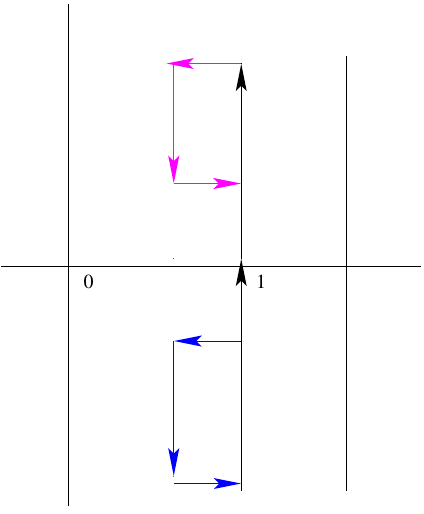}%
\end{picture}%
\setlength{\unitlength}{1895sp}%
\begingroup\makeatletter\ifx\SetFigFont\undefined%
\gdef\SetFigFont#1#2#3#4#5{%
  \reset@font\fontsize{#1}{#2pt}%
  \fontfamily{#3}\fontseries{#4}\fontshape{#5}%
  \selectfont}%
\fi\endgroup%
\begin{picture}(4224,5115)(4939,-5164)
\put(6376,-3436){\makebox(0,0)[lb]{\smash{{\SetFigFont{6}{7.2}{\familydefault}{\mddefault}{\updefault}{\color[rgb]{0,0,1}-M}%
}}}}
\put(7651,-1711){\makebox(0,0)[lb]{\smash{{\SetFigFont{6}{7.2}{\familydefault}{\mddefault}{\updefault}{\color[rgb]{0,0,0}$\gamma_1$}%
}}}}
\put(7576,-3811){\makebox(0,0)[lb]{\smash{{\SetFigFont{6}{7.2}{\familydefault}{\mddefault}{\updefault}{\color[rgb]{0,0,0}$\gamma_2$}%
}}}}
\put(6151,-2536){\makebox(0,0)[lb]{\smash{{\SetFigFont{6}{7.2}{\familydefault}{\mddefault}{\updefault}{\color[rgb]{0,0,0} 1-$\mu_0$}%
}}}}
\put(6226,-511){\makebox(0,0)[lb]{\smash{{\SetFigFont{6}{7.2}{\familydefault}{\mddefault}{\updefault}{\color[rgb]{0,0,0}2$\lambda_n$}%
}}}}
\put(6226,-5086){\makebox(0,0)[lb]{\smash{{\SetFigFont{6}{7.2}{\familydefault}{\mddefault}{\updefault}{\color[rgb]{0,0,0}-2$\lambda_n$}%
}}}}
\put(6451,-1786){\makebox(0,0)[lb]{\smash{{\SetFigFont{6}{7.2}{\familydefault}{\mddefault}{\updefault}{\color[rgb]{0,0,1}M}%
}}}}
\end{picture}%

\caption{Contour of integration for $K_n(1 + \i t)$}
\end{center}

\end{figure}

For the other two integrals we apply the argument used above exploiting the analytic continuation of $k_{n}(s)$ for $1 - \mu_0 \leq \Re s \leq 1, |\Im s| \geq M.$ We treat below only the integral over 
$[M+ 1, 2\lambda_n]$, the analysis of the other one  is very similar. We have
\begin{eqnarray*} 
\frac{1}{\sqrt{y}} \int_{M+1}^{2\lambda_n} \frac{d}{dt} \Bigl[\Bigl(1 - \frac{t}{2\lambda_n}\Bigr)k_{n}(1 + \i t)\Bigr] e^{\i ty} dt \nonumber \\
= -\frac{1}{2\lambda_n \sqrt{y}} \int_{M+1}^{2\lambda_n} k_{n}(1 + \i t) e^{\i t y} dt \nonumber \\+ \frac{1}{\sqrt{y}}\int_{M+1}^{2\lambda_n}\Bigl(1 - \frac{t}{2\lambda_n}\Bigr) \frac{d}{dt}k_{n}(1 + \i t) dt.
\end{eqnarray*}
We write the term on the right hand side as follows
$$\frac{\i}{2 \lambda_n \sqrt{y}} \int_{\beta_1} k_{n}(s)e^{(s-1)y} ds - \frac{1}{\sqrt{y}} \int_{\beta_1}\Bigl(1- \frac{s - 1}{2\i \lambda_n}\Bigr)\frac{d}{ds}(k_{n}(s)) ds,$$
where 
$$\beta_1 = \{s \in \C:\:s = 1 + \i t,\: M+1 \leq t \leq 2 \lambda_n\}.$$

The integral is equal to a sum of three integrals over the curves 
$$\beta_{1,1} = \{s \in \C:\: s = z + 2 \i \lambda_n,\: 1-\mu_0 \leq z \leq 1\},$$
$$\beta_{1, 2} =\{s \in \C:\: s = 1-\mu_0 + \i t,\: M+1 \leq t \leq 2 \lambda_n\},$$
$$ \beta_{1,3} = \{s \in \C:\:s = z + \i (M+1), \: 1-\mu_0 \leq z \leq 1\}$$ 
with suitable orientation (see Figure 2).
For the integral over $\beta_{1,2}$ we obtain an estimate ${\mathcal O}\Bigl(\frac{e^{-\mu_0 y}\lambda_n^{1 + \nu}}{\sqrt{y}} \Bigr) = {\mathcal O}\Bigl(e^{-\frac{1}{3}\mu_0 n}\Bigr)$, choosing $\nu = 1/3$, 
while for that over $\beta_{1, 1}$ one deduces an estimate ${\mathcal O}(\frac{\lambda_n^{-1 + \nu}}{\sqrt{y}})$ which yields the same bound. To investigate the integral over $\beta_{1.3},$ we use the factor $\frac{1}{\sqrt{y}}$ and the fact that $M$ is fixed. This completes the proof of Lemma 5.3.
\end{proof}

\begin{lem}For any $0 < \eta \ll 1$ there exists $n_0(\eta) \in \N$ such that for $y \geq 1,\:\lambda_n = e^{\frac{1}{2}\mu_0 n}$ we have
\begin{equation} \label{eq:5.10}
\Bigl|\int_{-\infty}^{\lambda_n y} \frac{\sqrt{y}}{\sqrt{y - w/\lambda_n}} \frac{\sin^2 w}{w^2}dw - \pi\Bigr| < \eta,\: n \geq n_0(\eta).
\end{equation}
\end{lem}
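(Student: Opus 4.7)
The plan is to exploit the classical identity $\int_{-\infty}^{\infty} (\sin^2 w)/w^2\, dw = \pi$ and to show that the extra factor $\sqrt{y}/\sqrt{y - w/\lambda_n}$ is a negligible perturbation of $1$ once the tails are cut off. Given $0 <\eta \ll 1$, I would first fix $A = A(\eta) > 0$ large enough so that
\begin{equation*}
\int_{|w| \geq A} \frac{\sin^2 w}{w^2}\, dw < \frac{\eta}{8}
\quad \text{and}\quad
\Bigl|\int_{-A}^{A} \frac{\sin^2 w}{w^2}\, dw - \pi\Bigr| < \frac{\eta}{8}.
\end{equation*}
Then split the integral in \eqref{eq:5.10} as $\int_{-\infty}^{-A} + \int_{-A}^{A} + \int_{A}^{\lambda_n y/2} + \int_{\lambda_n y/2}^{\lambda_n y}$ (the last piece is empty unless $\lambda_n y \geq A$, which holds for $n$ large since $\lambda_n \to \infty$ and $y \geq 1$).

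On $[-A,A]$ I use that for $n$ large enough (so that $2A \leq \lambda_n$) and $y \geq 1$,
\begin{equation*}
\Bigl|\frac{\sqrt{y}}{\sqrt{y - w/\lambda_n}} - 1\Bigr| = \Bigl|\frac{1}{\sqrt{1 - w/(y\lambda_n)}} - 1\Bigr| \leq \frac{C\,|w|}{\lambda_n},
\end{equation*}
uniformly in $y\geq 1$, since $|w/(y\lambda_n)| \leq A/\lambda_n \leq 1/2$. Consequently
$\int_{-A}^{A} \bigl|\sqrt{y}/\sqrt{y - w/\lambda_n} - 1\bigr|\,(\sin^2 w)/w^2\, dw \leq (CA\pi)/\lambda_n < \eta/8$ for $n$ large. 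Combined with the choice of $A$, this controls the $[-A,A]$ contribution up to $\eta/4$.

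For the negative tail $w \leq -A$, we have $y - w/\lambda_n \geq y$, so the factor $\sqrt{y}/\sqrt{y - w/\lambda_n} \leq 1$ and the integral is bounded by $\eta/8$. For the moderate positive tail $A \leq w \leq \lambda_n y/2$, we have $y - w/\lambda_n \geq y/2$, so the factor is at most $\sqrt{2}$, yielding a bound $\sqrt{2}\cdot \eta/8 < \eta/4$. The delicate piece is the end region $\lambda_n y/2 \leq w \leq \lambda_n y$, where the factor does blow up; however there $\sin^2 w/w^2 \leq 4/(\lambda_n y)^2$, and the substitution $v = y - w/\lambda_n$ gives
\begin{equation*}
\int_{\lambda_n y/2}^{\lambda_n y}\frac{\sqrt{y}}{\sqrt{y - w/\lambda_n}}\,\frac{\sin^2 w}{w^2}\, dw \leq \frac{4}{(\lambda_n y)^2}\int_{0}^{y/2}\frac{\sqrt{y}\,\lambda_n}{\sqrt{v}}\,dv = \frac{4\sqrt{2}}{\lambda_n y} \leq \frac{4\sqrt{2}}{\lambda_n},
\end{equation*}
which is $< \eta/8$ for $n$ large. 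Adding all contributions yields \eqref{eq:5.10} for $n \geq n_0(\eta)$.

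The only mildly subtle step is the endpoint estimate just performed: the factor $\sqrt{y}/\sqrt{y - w/\lambda_n}$ is integrable but unbounded near $w = \lambda_n y$, so one must combine the square root singularity (which contributes a factor $\sqrt{y}$ after integration) with the quadratic decay of $\sin^2 w / w^2$ at $w \sim \lambda_n y$ in order to recover a bound of order $1/\lambda_n$ that is uniform in $y \geq 1$. Everything else is a straightforward dominated-convergence-type splitting.
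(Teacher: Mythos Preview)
Your argument is correct and is essentially the standard splitting the paper has in mind: the paper does not spell out a proof here but defers to Sub-Lemma~4.5 in \cite{KS}, and the very same endpoint estimate you carry out (splitting $[A,\lambda_n y]$ at $\lambda_n y/2$, bounding $\sin^2 w/w^2 \leq 4/(\lambda_n y)^2$ on the right half, and integrating the $1/\sqrt{y-w/\lambda_n}$ singularity to get $4\sqrt{2}/(\lambda_n y)$) appears verbatim later in the proof of Lemma~5.5. One tiny cosmetic point: your parenthetical ``the last piece is empty unless $\lambda_n y \geq A$'' should read $\lambda_n y \geq 2A$ for the split $[A,\lambda_n y/2]\cup[\lambda_n y/2,\lambda_n y]$ to make sense, but since $y\geq 1$ and $\lambda_n\to\infty$ this is automatic for $n$ large.
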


The proof is a repetition of the proof of Sub-Lemma 4.5 in \cite{KS} and we leave the details to reader.

Combining Lemmas 5.3 and 5.4, one obtains from the equality (\ref{eq:5.5}) that for fixed $q \geq 0$ and any $0 < \eta \ll 1$ there exists $n_0(\eta) \in \N$ such that for $y \geq n- q$ and $n \geq n_0(\eta) + q$ we have
\begin{equation} \label{eq:5.11}
A_n \sqrt{\pi}(1- \eta) \leq \int_{-\infty}^{\lambda_n y} H_n\Bigl(y - \frac{w}{\lambda_n}\Bigr) \frac{\sin^2 w}{w^2} \frac{\sqrt{y}}{\sqrt{y - w/\lambda_n}}dw  <A_n  \sqrt{\pi}(1 + \eta).
\end{equation}

Passing to the function $H_n(y)$, we have the following
\begin{lem} Let $q \geq 0$ be fixed. For any $0 < \eta \ll 1$ there exists $n_0(\eta) \in \N$ such that for $y \geq n- q$ and $n \geq n_0(\eta) + q$ we have
\begin{equation} \label{eq:5.12}
\frac{A_n}{\sqrt{\pi}}(1- \eta)\leq   H_n(y)  \leq \frac{A_n}{\sqrt{\pi}}(1 + \eta).
\end{equation}
\end{lem}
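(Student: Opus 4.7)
The plan is to follow the classical Korevaar-type Tauberian argument as in \cite{KS}, carefully tracking the dependence on $n$. After the change of variable $v=-w$, inequality (5.11) becomes
$$A_n\sqrt{\pi}(1-\eta) \leq \int_{-\lambda_n y}^{\infty} H_n\bigl(y + v/\lambda_n\bigr)\,\frac{\sin^2 v}{v^2}\,\frac{\sqrt{y}}{\sqrt{y + v/\lambda_n}}\,dv \leq A_n\sqrt{\pi}(1+\eta).$$
Since $\int_{\R}\sin^2 v/v^2\,dv = \pi$ and the kernel concentrates near $v=0$, this integral is heuristically $\approx H_n(y)\pi$, which yields $H_n(y)\approx A_n/\sqrt{\pi}$. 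The rigorous argument exploits the monotonicity of $\tilde{g}_n$ (inherited from that of $g_n$ and the nondecreasing factor $\sqrt{t}$) via two shifted versions of the above display.

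First I would establish the upper bound $H_n(y)\leq (A_n/\sqrt{\pi})(1+\eta)$. Fix a large parameter $A$ and set $y_+ = y + A/\lambda_n$. Apply the upper half of (5.11) at $y_+$: for $v\in[-A,A]$, the argument $y_+ + v/\lambda_n$ lies in $[y,y+2A/\lambda_n]$, so by monotonicity $\tilde{g}_n(y_+ + v/\lambda_n)\geq \tilde{g}_n(y)$, whence $H_n(y_+ + v/\lambda_n)\geq H_n(y)\,e^{-2A/\lambda_n}$. Dropping the nonnegative contribution from $|v|>A$ gives
$$A_n\sqrt{\pi}(1+\eta) \geq H_n(y)\,e^{-2A/\lambda_n}\,\frac{\sqrt{y_+}}{\sqrt{y_+ + A/\lambda_n}}\int_{-A}^A \frac{\sin^2 v}{v^2}\,dv.$$
Choosing $A = A(n)\to\infty$ with $A/\lambda_n\to 0$ (possible since $\lambda_n=e^{\mu_0 n/2}$), the $v$-integral tends to $\pi$ and the remaining factors tend to $1$ uniformly for $y\geq n-q$, yielding the upper bound.

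Next I would establish the matching lower bound by applying (5.11) at $y_- = y - A/\lambda_n$ and bounding the resulting integral from above in terms of $H_n(y)$. I split the range into: (i) $v\in[-A,A]$, where monotonicity gives $H_n(y_- + v/\lambda_n)\leq H_n(y)\,e^{2A/\lambda_n}$, so this contribution is at most $H_n(y)\pi(1+o(1))$; (ii) $v\geq A$, where the argument is $\geq y\geq n-q$ and the upper bound of the previous step gives $H_n\leq (A_n/\sqrt{\pi})(1+\eta)$, and $\int_A^\infty\sin^2 v/v^2\,dv = O(1/A)$; (iii) $v\in[-\lambda_n y_-,-A]$, split further by whether the argument $u=y_- + v/\lambda_n$ lies above or below $n-q$. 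For $u\geq n-q$ the upper bound applies; for $u<n-q$ I use the elementary monotonicity estimate $H_n(u)\leq H_n(n-q)\,e^{n-q-u}$ together with the decay $\sin^2 v/v^2\leq 1/v^2\leq 1/A^2$. Combining the three regions:
$$A_n\sqrt{\pi}(1-\eta) \leq H_n(y)\pi(1+o(1)) + O(A_n/A),$$
and letting first $A\to\infty$ and then $n\to\infty$ yields $H_n(y)\geq (A_n/\sqrt{\pi})(1-\eta)(1-o(1))$, completing the proof.

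The main obstacle is region (iii) of the lower-bound step: for arguments $u<n-q$ the bound from Step 2 does not apply, and one must instead rely on the monotonicity estimate $H_n(u)\leq H_n(n-q)\,e^{n-q-u}$ together with the $1/v^2$ decay of the kernel and the weight $\sqrt{y_-}/\sqrt{u}$, which may be large near $u=0$ but is tempered by the fact that $H_n(0)=0$. A secondary concern is bookkeeping: the ``$o(1)$'' errors must be uniform in $y\geq n-q$, which forces the choice $A=A(n)\to\infty$ with $A/\lambda_n\to 0$ and ultimately determines the threshold $n_0(\eta)$.
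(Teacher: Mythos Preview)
Your approach is essentially the paper's: both are the classical Tauberian argument using monotonicity of $\tilde g_n$, a Fej\'er-type kernel, and a window $[-A,A]$ with $A\to\infty$, $A/\lambda_n\to 0$. The paper makes the concrete choice $A=\sqrt{\lambda_n}$ (so $A/\lambda_n=1/\sqrt{\lambda_n}$) and shifts by $\pm 1/\sqrt{\lambda_n}$ \emph{after} deriving the bound, whereas you shift $y\to y_\pm$ first; these are equivalent.

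There is one genuine weak point, and it is exactly the one you flag. In region (iii) with $u<n-q$, your proposed fix $H_n(u)\le H_n(n-q)\,e^{n-q-u}$ together with $\sin^2 v/v^2\le 1/A^2$ is \emph{not} strong enough. After changing variables $dv=\lambda_n\,du$, the resulting bound is of order
\[
A_n\cdot\frac{\lambda_n}{A^2}\,\sqrt{y_-}\int_0^{n-q}\frac{e^{(n-q)-u}}{\sqrt{u}}\,du
\;\asymp\; A_n\cdot\frac{\lambda_n\,e^{n-q}\sqrt{y_-}}{A^2},
\]
and since $\lambda_n=e^{\mu_0 n/2}$ with $\mu_0<1$, no admissible choice of $A\le\lambda_n$ makes this $o(A_n)$: the exponential $e^{n-q}$ overwhelms the polynomial gain. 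Using the sharper $1/v^2=1/(\lambda_n(y_--u))^2$ instead of $1/A^2$ does not save the estimate either.

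The paper avoids this difficulty differently: it does \emph{not} use the monotonicity bound $e^{(n-q)-u}$ on the far tail. Instead, for $w\in[\sqrt{\lambda_n},\lambda_n y]$ it bounds $H_n(y-w/\lambda_n)$ by the pointwise upper bound $\tfrac{A_n}{\sqrt{\pi}}(1+\eta)$ already obtained, and then shows that the remaining kernel integral
\[
\int_{\sqrt{\lambda_n}}^{\lambda_n y}\frac{\sin^2 w}{w^2}\,\frac{\sqrt{y}}{\sqrt{y-w/\lambda_n}}\,dw
\]
is itself $<\eta$, via the split at $w=\lambda_n y/2$. So you should follow this route for the tail contribution rather than reverting to the exponential monotonicity bound.
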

\begin{proof}
  Since $H_n(y)$ is nonnegative, from (\ref{eq:5.11}) for $y \geq n- q, \: n \geq n_0(\eta) + q$ it follows that
  \begin{equation} \label{eq:5.13}
\int_{-\sqrt{\lambda_n}}^{\sqrt{\lambda_n}} H_n\Bigl(y - \frac{w}{\lambda_n}\Bigr)  \frac{\sin^2 w}{w^2} \frac{\sqrt{y}}{\sqrt{y - w/\lambda_n}}dw \leq A_n\sqrt{\pi}(1 + \eta).
\end{equation}
By the monotonicity of $g_n(w)$ for $w \in [-\sqrt{\lambda_n}, \sqrt{\lambda_n}],$ we deduce
$$H_n\Bigl(y - \frac{w}{\lambda_n}\Bigr) \geq H_n\Bigl(y - \frac{1}{\sqrt{\lambda_n}}\Bigr)\exp\Bigl(- \frac{2}{\sqrt{\lambda_n}}\Bigr),$$
hence
$$H_n\Bigl(y - \frac{1}{\sqrt{\lambda_n}}\Bigr) \exp\Bigl(- \frac{2}{\sqrt{\lambda_n}}\Bigr)\int_{-\sqrt{\lambda_n}}^{\sqrt{\lambda_n}}\frac{\sin^2 w}{w^2} \frac{\sqrt{y}}{\sqrt{y - w/\lambda_n}}dw \leq A_n\sqrt{\pi}(1 + \eta)$$
and
$$H_n\Bigl(y - \frac{1}{\sqrt{\lambda_n}}\Bigr) \leq \frac{A_n\sqrt{\pi}(1 + \eta)e^{\frac{2}{\sqrt{\lambda_n}}}}{\int_{-\sqrt{\lambda_n}}^{\sqrt{\lambda_n}}\frac{\sin^2 w}{w^2} \frac{\sqrt{y}}{\sqrt{y - w/\lambda_n}}dw}.$$

As in Lemma 5.4, for large $n$ we can arrange
$$\Bigl|\int_{-\sqrt{\lambda_n}}^{\sqrt{\lambda_n}}\frac{\sin^2 w}{w^2} \frac{\sqrt{y}}{\sqrt{y - w/\lambda_n}}dw - \pi\Bigr| < \eta.$$

The above inequalities imply
$$H_n\Bigl(y - \frac{1}{\sqrt{\lambda_n}}\Bigr) \leq \frac{A_n}{\sqrt{\pi}}\Bigl[(1 + \eta) e^{\frac{2}{\sqrt{\lambda_n}}}\Bigr]\Bigl(1 - \frac{\eta}{\pi}\Bigr)^{-1}\leq \frac{A_n}{\sqrt{\pi}}(1 + C_3\eta)$$
with $C_3 >0$ independent on $n$. Replacing $y$ by $y + \frac{1}{\sqrt{\lambda_n}},$ we obtain for large $n$ the upper bound in (\ref{eq:5.12}). 

Next we pass to the analysis of the lower bound. Applying the upper bound obtained above and the monotonicity of $H_n(y)$, from the left-hand-side inequality in (\ref{eq:5.11}) we obtain
$$A_n\sqrt{\pi}(1 - \eta) \leq \frac{A_n}{\sqrt{\pi}}(1 + \eta)\int_{-\infty}^{-\sqrt{\lambda_n}}\frac{1}{w^2} dw $$
$$+\int_{-\sqrt{\lambda_n}}^{\sqrt{\lambda_n}}H_n\Bigl(y + \frac{1}{\sqrt{\lambda_n}}\Bigr) e^{\frac{2}{\sqrt{\lambda_n}}}\frac{\sin^2 w}{w^2}\frac{\sqrt{y}}{\sqrt{y - w/\lambda_n}}dw $$
$$+ \frac{A_n}{\sqrt{\pi}}(1 + \eta) \int_{\sqrt{\lambda_n}}^{\lambda_n y}\frac{\sin^2 w}{w^2}\frac{\sqrt{y}}{\sqrt{y - w/\lambda_n}}dw. $$

Clearly, for large $n$
$$\int_{-\infty}^{-\sqrt{\lambda_n}} \frac{1}{w^2} dw=  \frac{1}{\sqrt{\lambda_n}} <  \eta.$$
For the third term on the right hand side for large $n$ one gets
$$\int_{\sqrt{\lambda_n}}^{\lambda_n y} = \int_{\sqrt{\lambda_n}}^{\lambda_n y/2}+\int_{\lambda_n y/2}^{\lambda_n y} \leq \frac{1}{\sqrt{2}} \int_{\sqrt{\lambda_n}}^{\lambda_n y/2} \frac{\sin^2 w}{w^2} dw + \frac{4 \sqrt{y}}{\lambda_n^2 y^2} \int_{\lambda_n y/2}^{\lambda_n y}\frac{1}{\sqrt{y - w/\lambda_n}} dw$$
$$\leq \frac{1}{\sqrt{2}} \int_{\sqrt{\lambda_n}}^{\infty} \frac{\sin^2 w}{w^2} dw + \frac{4 \sqrt{2}}{\lambda_n y} < \eta.$$
Consequently,
$$A_n\sqrt{\pi}(1 -\eta)- 2\frac{A_n}{\sqrt{\pi}}(1+ \eta) \eta \leq H_n\Bigl(y + \frac{1}{\sqrt{\lambda_n}}\Bigr) e^{\frac{2}{\sqrt{\lambda_n}}}\int_{-\sqrt{\lambda_n}}^{\sqrt{\lambda_n}} \frac{\sin^2 w}{w^2}\frac{y}{\sqrt{y - w/\lambda_n}} dw$$
and for large $n$ with a constant $C_4 > 0$ independent on $n$ we obtain
$$\frac{A_n\sqrt{\pi}(1 - C_4 \eta)e^{-\frac{2}{\sqrt{\lambda_n}}}}{\pi + \eta}\leq \frac{A_n \sqrt{\pi}(1 - C_4 \eta)e^{-\frac{2}{\sqrt{\lambda_n}}}}{\int_{-\sqrt{\lambda_n}}^{\sqrt{\lambda_n}}\frac{\sin^2 w}{w^2}\frac{y}{\sqrt{y - w/\lambda_n}} dw} \leq H\Bigl(y + \frac{1}{\sqrt{\lambda_n}}\Bigr).$$
This estimate implies a lower bound for  $y \geq n + \frac{1}{\sqrt{\lambda_n}}- q.$ Changing $q$ by $ q+ 1$, we obtain a lower bound for $y \geq n - q.$
\end{proof}

Obviously Proposition 5.1 follows from Lemma 5.5.\\

For functions which are not monotonic we prove the following
\begin{prop}
   Let $g_n(t) \in C^1([0, \infty[; \R^+), \: n \in \N$, be nonnegative functions such that 
\begin{equation} \label{eq:5.14}
\max_{0 \leq t \leq 1}g_n(t) \leq B_0,\:
|g_n'(t)|\leq B_1 \frac{e^t}{\sqrt{t}},\:  t \geq 1,\: \forall n \in \N,
\end{equation}
with constants $B_0 > 0, B_1 > 0$ independent of $n$.
 Assume that for any $n \in \N$ the Laplace transforms
$$F_n(s) = \int_0^{\infty} e^{-s t} g_n(t)dt$$
are analytic for $\Re s > 1$ and have the same properties as in Proposition $5.1.$ 
Then for any fixed $q \geq 0$ and any $0 < \eta \ll 1$ there exists $n_0(\eta) \in \N$ such that for $t \geq n- q$ and $n \geq n_0(\eta)+ q$ we have
\begin{equation} \label{eq:5.15}
\frac{A_n e^t}{\sqrt{\pi t}} (1 -\eta) < g_n(t)  < \frac{A_n e^t}{\sqrt{\pi t}} (1+ \eta).
\end{equation}
\end{prop}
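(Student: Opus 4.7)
The plan is to reduce to the monotone setting of Proposition 5.1 by passing to the primitive of $g_n$, and then to recover $g_n$ by a short-range finite difference, using the derivative bound in $(\ref{eq:5.14})$ to control the discretisation error.

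First I would set $G_n(t) = \int_0^t g_n(s)\, ds$, which is nondecreasing since $g_n \geq 0$, with $G_n(0) = 0$. Its Laplace transform is $\widetilde{F}_n(s) = F_n(s)/s$, and the expansion $1/(s\sqrt{s-1}) = 1/\sqrt{s-1} - \sqrt{s-1}/s$ around $s = 1$ gives
\[
\widetilde{F}_n(s) = \frac{A_n}{\sqrt{s-1}} + A_n \widetilde{K}_n(s) + \widetilde{L}_n(s),
\]
with $\widetilde{K}_n(s) = K_n(s)/s - \sqrt{s-1}/s$ and $\widetilde{L}_n(s) = L_n(s)/s$. Since $0$ lies outside the strip $1 - \mu_0 \leq \Re s \leq 1 + \delta_0$, multiplication by $1/s$ preserves analyticity and the polynomial bounds there; the additional piece $-\sqrt{s-1}/s$ is analytic in $\{\Re s \geq 1 - \mu_0,\, |\Im s| \geq M\}$ with $|\sqrt{s-1}/s| = O(|\Im s|^{-1/2})$ and an analogous derivative bound, so it fits into the $\widetilde{K}_n$ framework. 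Thus $\widetilde{F}_n$ satisfies the hypotheses of Proposition 5.1 with the same sequence $A_n$.

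Applying Proposition 5.1 to $G_n$ with a tolerance $\eta_1$ to be chosen gives $n_1(\eta_1) \in \N$ such that
\[
\frac{A_n e^t}{\sqrt{\pi t}}(1 - \eta_1) \leq G_n(t) \leq \frac{A_n e^t}{\sqrt{\pi t}}(1 + \eta_1)
\]
for $n \geq n_1(\eta_1) + q$ and $t \geq n - q$. For any $\delta \in (0, 1]$, integration by parts yields the identity
\[
G_n(t+\delta) - G_n(t) = \delta\, g_n(t) + \int_t^{t+\delta} (t+\delta-s)\, g_n'(s)\, ds,
\]
and $(\ref{eq:5.14})$ bounds the last integral in absolute value by $\tfrac{1}{2} B_1 e\, \delta^2\, e^t/\sqrt{t}$. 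Dividing by $\delta$, combining with the two-sided bound on $G_n$, and Taylor-expanding $e^\delta \sqrt{t/(t+\delta)} = 1 + \delta + O(\delta^2) + O(\delta/t)$ produces
\[
g_n(t) = \frac{A_n e^t}{\sqrt{\pi t}}\left(1 + O(\delta) + O\!\left(\frac{\eta_1}{\delta}\right) + O\!\left(\frac{\delta}{A_n}\right)\right),
\]
uniformly in $n$ for $t \geq n - q \gg 1$.

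The main obstacle is balancing the three error terms given that $A_n$ may be as small as $C_0 e^{-\mu n}$. For a prescribed $\eta \in (0,1)$, the tightest constraint $O(\delta/A_n) \leq \eta/3$ forces $\delta = c\, \eta\, e^{-\mu n}$ for a constant $c > 0$, which in turn forces $\eta_1 = c'\, \eta^2\, e^{-\mu n}$ to control $O(\eta_1/\delta)$, while $O(\delta) \leq \eta/3$ is automatic. It remains to verify that $n_1(\eta_1) \leq n$ holds for this exponentially small $\eta_1$. Inspecting the proof of Proposition 5.1, the Tauberian error can be bounded by $e^{-(1-\nu)\mu_0 n/2}$ for arbitrarily small $\nu > 0$ (rather than the illustrative value $\nu = 1/3$ used there), so the compatibility $n_1(\eta_1) \leq n$ reduces, after taking ratios with $A_n \asymp e^{-\mu n}$, to the inequality $(1-\nu)\mu_0/2 - 2\mu > 0$. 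This holds under the standing assumption $\mu \leq \mu_0/4$ upon choosing $\nu$ sufficiently small (strictly for $\mu < \mu_0/4$, and at the boundary by extracting an additional power of $1/n$ from the contour integrations in the proof of Proposition 5.1), which is precisely the reason that bound on $\mu$ is imposed. This delivers the required two-sided asymptotic $(\ref{eq:5.15})$.
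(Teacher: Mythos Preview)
Your reduction to the monotone case via $G_n(t)=\int_0^t g_n$ is elegant, and the verification that $\widetilde F_n=F_n/s$ again has the structure required by Proposition~5.1 is correct. The gap is in the quantitative step at the end. You assert that ``the Tauberian error can be bounded by $e^{-(1-\nu)\mu_0 n/2}$,'' but this exponential rate comes only from the $L_n$ part of the decomposition, where one can shift the contour to $\Re s=1-\mu_0$. The $K_n$ part has no analytic continuation across $\Re s=1$ for $|\Im s|\le M$; there the proof of Proposition~5.1 integrates by parts once over the fixed interval $[-M-1,M+1]$, gaining only a factor $1/\sqrt{y}=O(1/\sqrt{n})$ (and one cannot iterate: only $k_n,k_n'\in L^1_{\mathrm{loc}}$ is assumed, and your added piece $\sqrt{s-1}/s$ has second derivative $\sim|t|^{-3/2}$, not locally integrable). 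Hence the relative error $\eta_1$ delivered by Proposition~5.1 is at best $O(1/\sqrt{n})$, not exponentially small. Your balancing then requires $\eta_1\le \eta\,\delta$ and $\delta\le\eta\,A_n$, so $\eta_1\le\eta^2 A_n\le C\eta^2 e^{-\mu n}$; this is incompatible with $\eta_1\gtrsim 1/\sqrt{n}$ as soon as $A_n$ is genuinely exponentially small.

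The paper avoids this obstruction by not passing to the primitive at all. It re-enters the proof of Proposition~5.1 at the Fej\'er-kernel identity (5.11), which holds without monotonicity, and replaces the monotonicity step of Lemma~5.5 by a first-order Taylor expansion $g_n(y-w/\lambda_n)=g_n(y)+O(|w|/\lambda_n)\,\sup|g_n'|$ on $|w|\le\sqrt{\lambda_n}$. Using the hypothesis $|g_n'(t)|\le B_1 e^t/\sqrt{t}$ (hence $|\tilde g_n'(t)|\le B_3 e^t$ after the substitution $\tilde g_n=g_n\sqrt{t}$), the Taylor remainder contributes an \emph{additive} term $B_3/\sqrt{\lambda_n}=B_3 e^{-\mu_0 n/4}$ to $H_n(y)$, and this is $\le A_n\eta$ precisely because $\mu\le\mu_0/4$. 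The point is that the derivative control is used \emph{inside} the kernel integral at scale $1/\sqrt{\lambda_n}$, not at an external scale $\delta$ chosen after the Tauberian step; this bypasses the $O(1/\sqrt{n})$ bottleneck entirely.
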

\begin{proof}  Replacing the function $g_n(t)$ by $\tilde{g}_n(t) = g_n(t) \sqrt{t}$, we can assume that
$$ F_n(s) = \int_0^{\infty} \frac{e^{-st}}{\sqrt{t}} \tilde{g}_n(t) dt$$
has the representation (\ref{eq:5.1}). On the other hand,
$$g_n(t) = \int_1^t g_n'(\sigma) d \sigma + g_n(1) \leq B_0 + B_1 \int_1^t \frac{e^{\sigma}}{\sqrt{\sigma}} d \sigma < B_0 + B_1e^t, \:t \geq 1,\: \forall n \in \N.$$
Therefore 
\begin{equation} \label{eq:5.16}
|\tilde{g}_n'(t)| = \Bigl|\frac{g_n(t)}{2 \sqrt{t}} + \sqrt{t} g_n'(t)\Bigr| \leq B_3 e^t,\:t \geq 1, \: \forall n \in \N
\end{equation}
with a constant $B_3 > 0$ independent on $n$.
 Below we denote $\tilde{g}_n(t)$ again by $g_n(t)$ and we assume that $|g_n'(t)| \leq B_3 e^t$ for $t \geq 1.$ We will repeat a part of the proof of Proposition 5.1 and for simplicity we use the notations of this proof. 
 Set $H_n(y) = g_n(y) e^{-y}$ and for $s = 1 + \ep + \i t$ define
$$\kc_{\ep, n}(t) =  F_n(s) - \frac{A_n}{\sqrt{s- 1}}= A_n K_n(s) + L_n(s).$$
Since for $F_n(s), K_n(s), L_n(s)$ we have the same assumptions as in Proposition 5.1, we can apply Lemma 5.3. 
Thus for $y \geq n - q,\: \lambda_n = e^{\frac{1}{2}\mu_0 n}$ and $n \geq n_0(\eta) + q$ we obtain the estimate (\ref{eq:5.11}).  

 Because $H_n(y) \geq 0$ to get an upper bound  we use the inequality
\begin{equation} \label{eq:5.17}
\int_{-\sqrt{\lambda_n}}^{\sqrt{\lambda_n}} H_n\Bigl(y - \frac{w}{\lambda_n}\Bigr) \frac{\sin^2 w}{w^2} \frac{\sqrt{y}}{\sqrt{y - w/\lambda_n}}dw  <  A_n  \sqrt{\pi}(1 + \eta).
\end{equation}
Clearly,  for $-\sqrt{\lambda_n} \leq w \leq \sqrt{\lambda_n}$ one has
$$H_n\Bigl(y - \frac{w}{\lambda_n}\Bigr) \geq g_n(y - \frac{w}{\lambda_n}) e^{-y- \frac{1}{\sqrt{\lambda_n}}}$$
$$= H_n(y) e^{-\frac{1}{\sqrt{\lambda_n}}} + \Bigl(g_n(y - \frac{w}{\lambda_n}) - g_n(y)\Bigr) e^{-y- \frac{1}{\sqrt{\lambda_n}}}.$$
By Taylor expansion write
$$g_n\Bigl(y - \frac{w}{\lambda_n}\Bigr) - g_n(y) = -\frac{w}{\lambda_n}g_n'\Bigl(y - \frac{\theta w}{\lambda_n}\Bigr),\: 0 < \theta < 1,$$
hence for $-\sqrt{\lambda_n} \leq w \leq \sqrt{\lambda_n}$ we deduce
$$|g_n\Bigl(y - \frac{w}{\lambda_n}\Bigr) - g_n(y)| \leq \frac{B_3}{\sqrt{\lambda_n}} e^{y + \frac{1}{\sqrt{\lambda_n}}},$$
where we have used the estimate (\ref{eq:5.16}). Thus we obtain
$$H_n\Bigl(y - \frac{w}{\lambda_n}\Bigr) \geq H_n(y) e^{-\frac{1}{\sqrt{\lambda_n}}}- \frac{B_3}{\sqrt{\lambda_n}}$$
and for $y \geq n- q$ and large $n$ we conclude that
$$H_n(y) \leq \frac{A_n \sqrt{\pi}(1 + \eta)e^{\frac{1}{\sqrt{\lambda_n}}}}{\int_{-\sqrt{\lambda_n}}^{\sqrt{\lambda_n}} \frac{\sin^2w}{w^2}\frac{\sqrt{y}}{\sqrt{y - \frac{w}{\lambda_n}}}dw} 
+ \frac{B_3}{\sqrt{\lambda_n}}e^{\frac{1}{\sqrt{\lambda_n}}}$$
$$\leq \frac{A_n \sqrt{\pi}(1+ \eta)^2}{\pi - \eta} + A_n \eta \leq \frac{A_n}{\sqrt{\pi}}(1 + C_5 \eta)$$
with $C_5 > 0$ independent of $n$. This implies the upper bound in (\ref{eq:5.15}).

Passing to the lower bound, we repeat the argument of Lemma 5.5 and we are going to find an upper bound for
$$\int_{-\sqrt{\lambda_n}}^{\sqrt{\lambda_n}}H_n\Bigl(y - \frac{w}{\sqrt{\lambda_n}}\Bigr)\frac{\sin^2 w}{w^2} \frac{\sqrt{y}}{\sqrt{y - w/\lambda_n}}dw. $$
As above,  by Taylor expansion for $-\sqrt{\lambda_n} \leq w \leq \sqrt{\lambda_n}$ and large $n$ one deduces
$$H_n\Bigl(y - \frac{w}{\sqrt{\lambda_n}}\Bigr) \leq g_n(y -\frac{w}{\sqrt{\lambda_n}})e^{-y + \frac{1}{\sqrt{\lambda_n}}}$$
$$\leq H_n(y) e^{\frac{1}{\sqrt{\lambda_n}}} + \frac{B_3}{\sqrt{\lambda_n}} e^{\frac{2}{\sqrt{\lambda_n}}} ,$$
and we obtain easily the lower bound in (\ref{eq:5.15}). This completes the proof.
\end{proof}

As a preparation for the proof of the estimate (\ref{eq:5.14}) we establish the following

\begin{lem} Let $\omega(t) \in C^1([0,\infty[)$ be a nonnegative function such that for a fixed $0 < \nu_0 \leq \mu_0/4$ we have
the estimate
\begin{equation} \label{eq:5.18}
|\omega'(t)| \leq D_1 \frac{e^{(1+\nu_0) t}}{\sqrt {t}},\: \forall t \geq t_0 > 1.
\end{equation}
Assume that the Laplace transform
$$\Omega(s) = \int_0^{\infty} e^{-st} \omega(t)dt$$
is analytic for $\Re s > 1$. Assume that there exist  $A> 0, \delta_0 > 0, M > 0$ such that $\Omega(s)$ has a representation
\begin{equation} \label{eq:5.19}
\Omega(s) =\frac{A}{\sqrt{s - 1}} + K(s),
\end{equation}
where  $K(s)$ is a function which is analytic for $\Re s > 1$ and $K(s)$ satisfies the same assumptions as $K_1(s)$ in Proposition $5.1$. 
Then there exists a constant $D_0 > 0$ such that
\begin{equation} \label{eq:5.20}
\omega(t) \leq D_0 \frac{e^t}{\sqrt{t}},\: \forall t \geq t_0.
\end{equation}
\end{lem}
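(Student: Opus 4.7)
The plan is to adapt the Tauberian inversion machinery of Proposition 5.1 and Proposition 5.6 to this single-function, upper-bound-only setting, in which the derivative of $\omega$ is permitted to grow slightly faster than $e^{t}/\sqrt{t}$. First I would replace $\omega$ by $\widetilde{\omega}(t)=\omega(t)\sqrt{t}$, so that $\int_0^\infty \frac{e^{-st}}{\sqrt{t}}\widetilde{\omega}(t)\,dt = \frac{A}{\sqrt{s-1}}+K(s)$, and set $H(y)=\widetilde{\omega}(y)e^{-y}\geq 0$. Integrating the hypothesis on $|\omega'|$ gives $\omega(t)\leq C\,e^{(1+\nu_0)t}/\sqrt{t}$, and a direct computation of $\widetilde{\omega}'(t)=\omega(t)/(2\sqrt{t})+\sqrt{t}\,\omega'(t)$ yields $|\widetilde{\omega}'(t)|\leq C'\,e^{(1+\nu_0)t}$, hence
$$|H'(y)|\leq C''\,e^{\nu_0 y}, \qquad y\geq t_0.$$
This replaces the bound (5.16) of Proposition 5.6 and is the key quantitative input; the extra factor $e^{\nu_0 y}$ is the only obstruction to directly quoting the earlier proof.

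Next I would carry out the Fourier-inversion step leading to (5.5): multiply $\mathcal{C}_\varepsilon(t):=\Omega(1+\varepsilon+\i t)-A/\sqrt{\varepsilon+\i t}$ by $\sqrt{y}\,e^{\i ty}(1-|t|/(2\lambda))$, integrate over $[-2\lambda,2\lambda]$, let $\varepsilon\searrow 0$, and change variables $v=\sqrt{y-w/\lambda}$ on the right-hand side to rewrite it as an integral of $H(y-w/\lambda)\sin^2 w/w^2\cdot\sqrt{y}/\sqrt{y-w/\lambda}$. This reduces the problem to bounding the left-hand integral, which involves only $k(1+\i t):=\lim_{\varepsilon\searrow 0}K(1+\varepsilon+\i t)$. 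Mimicking Lemma 5.3, I would take $\lambda=\lambda(y):=e^{\mu_0 y/2}$, integrate by parts in $t$, and shift the contour onto $\Re s=1-\mu_0$ on each of the pieces $|\Im s|\geq M$ (where $K$ extends analytically with polynomial growth). The piece at $\Re s=1-\mu_0$ contributes $O(e^{-\mu_0 y}\lambda^{1+\nu}/\sqrt{y})$; the horizontal sides at $\Im s=\pm 2\lambda$ contribute $O(\lambda^{-1+\nu}/\sqrt{y})$; the compact piece $|\Im s|\leq M$ is handled directly from the local $L^1$ bound on $k'$. Choosing $\nu=1/3$ and $y\geq t_0$, both the shifted contributions are absolutely bounded, yielding
$$\int_{-\sqrt{\lambda}}^{\sqrt{\lambda}} H(y-w/\lambda)\,\frac{\sin^2 w}{w^2}\frac{\sqrt{y}}{\sqrt{y-w/\lambda}}\,dw \leq C_1.$$

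Finally, Taylor expansion gives $|H(y-w/\lambda)-H(y)|\leq (|w|/\lambda)\sup_{|\xi-y|\leq 1/\sqrt{\lambda}}|H'(\xi)|\leq C'''\,e^{\nu_0 y}/\sqrt{\lambda}$ on $|w|\leq\sqrt{\lambda}$. With $\lambda=e^{\mu_0 y/2}$ and the assumption $\nu_0\leq\mu_0/4$ one has $e^{\nu_0 y}/\sqrt{\lambda}=e^{(\nu_0-\mu_0/4)y}\leq 1$, so the oscillation of $H$ on the window is uniformly bounded. Since $\int_{-\sqrt{\lambda}}^{\sqrt{\lambda}} \sin^2 w/w^2\cdot\sqrt{y}/\sqrt{y-w/\lambda}\,dw\to\pi$ as $\lambda\to\infty$ (as in Lemma 5.4), we deduce $H(y)\leq D_0'$ for all $y\geq t_0$, which is exactly (5.20). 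The main obstacle is the delicate balance in the contour shift: the decay factor $e^{-\mu_0 y}$ provided by the analyticity strip of $K$ must dominate the growth $e^{\nu_0 y}$ of $H'$ even after absorbing the polynomial loss $\lambda^{1+\nu}$ from the hypotheses on $K'$, and the assumption $\nu_0\leq\mu_0/4$ is precisely what secures this balance.
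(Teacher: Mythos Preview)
Your proposal is correct and follows essentially the same route as the paper's proof: both pass to $\widetilde{\omega}(t)=\sqrt{t}\,\omega(t)$ and $H(y)=\widetilde{\omega}(y)e^{-y}$, take $\lambda=\lambda(y)=e^{\mu_0 y/2}$ depending on $y$, repeat the contour-shifting argument of Lemma~5.3 to get a uniform bound on the left-hand side (this is isolated in the paper as Lemma~5.8), restrict the Fej\'er integral to $|w|\le\sqrt{\lambda}$, and then control the oscillation of $H$ on this window by Taylor expansion, using precisely that $\nu_0\le\mu_0/4$ makes $e^{\nu_0 y}/\sqrt{\lambda(y)}=e^{(\nu_0-\mu_0/4)y}$ bounded. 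Your exponent bookkeeping is in fact slightly cleaner than the paper's (which writes $\nu_0-\mu_0/2$ in place of $\nu_0-\mu_0/4$, a harmless slip).
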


\begin{proof} We follow the proof of Proposition 5.6. First we replace the function $\omega(t)$ by $\tilde{\omega}(t) = \sqrt{t} \omega(t)$ and for the Laplace transform
$$\int_0^{\infty} \frac{e^{-s t}}{\sqrt{t}}\tilde{\omega}(t)dt $$
we have the representation (\ref{eq:5.1}) with $K(s)$ instead of $K_{n}(s)$, $L_n(s) = 0$ and $|\tilde{\omega}'(t)|\leq C_2 e^{(1 + \nu_0)t}.$ We denote below $\tilde{\omega}(t)$ by $\omega(t).$ We set $H(y) = \frac{\omega(y)}{e^y}, \:\kc_0(t) = \lim_{\ep \searrow 0}K(1 +\ep + \i t)$ and prove the following
\begin{lem} There exist $y_0 > 1$ and $A_1 > 0$ independent on $y$ such that for $y \geq y_0, \lambda(y) = \exp(\frac{1}{2} \mu_0 y)$ we have
\begin{equation*}
\Bigr|\int_{-2\lambda(y)}^{2 \lambda(y)} \kc_{0}(t) \Bigr( 1 - \frac{|t|}{2\lambda(y)}\Bigr) \sqrt{y} e^{\i t y} dt\Bigr| \leq A_1.
\end{equation*}
\end{lem}
The proof is a repetition of that of Lemma 5.3 and we omit the details.

We return to the proof of Lemma 5.7. For $y \geq y_0$  and $\lambda(y) = \exp(\frac{1}{2}\mu_0 y)$ we deduce
$$ \int_{-\sqrt{\lambda(y)}}^{ \sqrt{\lambda(y)}}  H\Bigl(y - \frac{w}{\lambda(y)}\Bigr) \frac{\sin^2 w}{w^2} \frac{\sqrt{y}}{\sqrt{y - w/\lambda(y)}}dw  <  A  \sqrt{\pi} + A_1.$$
It is important to estimate for $-\sqrt{\lambda(y)} \leq w \leq \sqrt{\lambda(y)}$ and $0 < \theta < 1$ the term
$$\Bigl|\omega\Bigl(y - \frac{w}{\lambda(y)}\Bigr) - \omega(y)\Bigr| = \Bigl|\frac{w}{\lambda(y)} \omega'\Bigl(y - \frac{\theta w}{\lambda(y)}\Bigr)\Bigr| \leq 
D_1 e^{( y + \frac{1+ \nu_0}{\sqrt{\lambda(y)}})}e^{(\nu_0- \frac{1}{2}\mu_0)y}.$$
Since $0 < \nu_0 \leq \mu_0/4$, for large $y$ we may bound the right hand side of the last inequality by $c e^y$ with a small constant $c > 0$ independent on $y$ and $D_1$. Consequently, as in 
the proof of Proposition 5.6, one obtains
$$H\Bigl(y - \frac{w}{\lambda(y)}\Bigr)\geq H(y)e^{-\frac{1}{\sqrt{\lambda(y)}}} -B_3$$
with a constant $B_3 >0$ independent on $y$ and $D_1$ and we complete the proof as in Proposition 5.6.
\end{proof}
\begin{rem} Notice that our proof shows that the constant $D_0 > 0$ can be chosen independently of $D_1$ by taking $ t \geq t_1 > t_0$  and $t_1$ sufficiently large in $(\ref{eq:5.20}) $.

\end{rem}

\section{Asymptotic of $\rho_n(T)$}
\renewcommand{\theequation}{\arabic{section}.\arabic{equation}}
\setcounter{equation}{0}

In the section we use the notations of the previous sections. It is more convenient to study the function
$$g_n(T) = \ep_n e^{(-\gamma(a)  + 1)T}\rho_n(T) = \ep_n e^{(-\gamma(a) + 1)T}\int_{\rt} q_n(G^T - aT)(y) dm_F(y)$$
$$= \frac{\ep_n^2}{2\pi}e^{(-\gamma(a) + 1)T}\,  \int_{\R} \left( \int_{\rt}\, e^{z\, (G - a)^T(y)}\,d m_F(y)\right)\,  \hat{\chi}(\ep_n\omega)\,  \, d\omega.$$
Recall that  $\ep_n = e^{-\ep n},\:z = \xi(a) + \i \omega,\: a \in J  \Subset \Gamma_G,\: q_n(t) = e^{\xi(a) t} \chi(t/\ep_n).$ We take $0 <\ep \leq \mu_0/8$, $\mu_0$ being the constant in Proposition 4.2. We extend $g_n(T)$ as 0 for $T < 0$ and we wish to apply Proposition 5.6 for $g_n(T)$. We are going to check the assumptions of Proposition 5.6, where
$$A_n = \frac{C(a)\hat{\chi}(0)}{\sqrt{2\beta''(\xi(a))}} \ep_n^2,\: \forall n \in \N,$$
with $C(a) > 0$ determined below.
Clearly, $g_n(T)$ is a nonnegative function. The Laplace transform of $g_n(T)$ becomes
$$F_n(s) = \frac{\ep_n^2}{2\pi}\int_0^{\infty} e^{-(s + \gamma(a) -1)T} \,  \int_{\R} \left( \int_{\rt} \, e^{z\, (G - a)^T(y)}\,d m_F(y)\right)\,  \hat{\chi}(\ep_n\omega)\,  \, d\omega dT$$
and by Proposition 4.1 (i) for $\Re s >1$ the function $F_n(s)$ is analytic. Next we write the integral with respect to $\omega$ as a sum
$$\int_{\R} (...) = \int_{|\omega| \leq \ep_0}+\int_{\ep_0 < |\omega| \leq M} + \int_{|\omega| \geq M},$$
where $\ep_0 >0$ and $M >0$ are the constants introduced in the proof of Proposition 4.2. The corresponding decomposition of $F_n(s)$ will be a sum $F_n(s) = F_{1, n}(s) +F_{2, n}(s) + F_{3, n}(s).$ Notice that the factor $\hat{\chi}(\ep_n \omega)$ is not involved in the integration with respect to $y$ and $T$ and in the analysis of $F_{k,n}(s),\:k = 1,2$ we will have a coefficient $\ep_n^2$ implying the factor $A_n = {\mathcal O}(\ep_n^2).$  Moreover, in the Laplace transform $Z(s, \omega, a)$ we must replace  $s$ by $s +\gamma(a) - 1.$ According to Proposition 4.2,  the functions $F_{2, n}(s)$ are analytic for $1 - \mu_0 \leq\Re s, \: \mu_0 >0$ and 
$$\lim_{\delta \searrow 0}F_{2, n}(1 + \delta + \i t)= \ep_n^2f_{2, n}(1 + \i t)$$ 
with $f_{2, n}(1 + \i t)\in W^{1, 1}_{loc}(\R)$. For $|t| \leq M$ the function $|f_{2, n}(1 + \i t)|,\: |f_{2,n}'(1 + \i t)|$ are clearly uniformly bounded with respect to $n$, while for $|t| \geq M$ we have an analytic 
continuation $f_{2}(s)$ for $1 - \mu_0 \leq \Re s \leq 1+ \delta_0$. In the latter case we apply the estimate (\ref{eq:4.6}) with $0 < \nu < 1$ uniformly with respect to $n$ (see the case 2 in the proof of Proposition 4.2). Since 
$$\int_{\ep_0 \leq |\omega| \leq M} |\hat{\chi}(\ep_n \omega)| d\omega \leq  M \sup_{\xi \in\R}|\hat{\chi}(\xi)| ,\:\forall n \in \N,$$ 
 with a constant $C_2(\nu) > 0$ independent of $n,$ we obtain
\begin{equation} \label{eq:6.1}
|f_{2, n}(s)|\leq C_2(\nu)(1 + |\Im s|^{\nu}).
\end{equation}
Thus the term $F_{2, n}(s)$ contributes to $A_n K_n(s)$ in (\ref{eq:5.1}).\\

Passing to the analysis of $F_{3, n}(s)$, we apply the same argument based on the estimate (\ref{eq:4.6}). According to Proposition 4.2, $F_{3, n}(s)$ is analytic for $1 - \mu_0 \leq \Re s \leq 1 + \delta_0.$ In this case 
we have an infinite  integral with respect to $\omega$ and we will exploit the factor $\ep_n^2$ to estimate it. By using (\ref{eq:4.6}) with $0 < \nu < 1$, we must treat 
$$\ep_n^2 \int_{|\omega| \geq M} \Bigl(1 + |\Im s|^{\nu} + |\omega|^{\nu}\Bigr)|\hat{\chi}(\ep_n\omega)|d\omega.$$
The Fourier transform of $\chi \in C_0^{\infty}(\R)$ satisfies
$$|\hat{\chi}(\ep_n \omega)| \leq D_2(1 + |\ep_n \omega|)^{-2}$$
with a constant $D_2 > 0$ independent on $\ep_n$ and $\omega$.
By a change of variable $\ep_n \omega = \xi$ we get a convergent integral with respect to $\xi$ and with a constant $C_3(\nu) > 0$ independent on $n$ we deduce the estimate 
\begin{equation} \label{eq:6.2}
|F_{3, n}(t)|\leq C_3(\nu)\ep_n^{1-\nu}(1 +|\Im s|^{\nu}) \leq C_3(\nu) (1 + |\Im s|^{\nu}).
\end{equation}
Therefore, $F_{3, n}(s)$ contributes to the term $L_n(s)$ in (\ref{eq:5.1}) and we cannot get a coefficient $A_n$.\\

It remains to study the behaviour of $F_{1, n}(s)$. Here there are no problems with the convergence of the integral with respect to $\omega$ and we gain the factor $\ep_n^2.$ For $\ep_0 \leq |\Im s|$ the Laplace 
transform $F_{1,n}(s)$ has no singularities and as above we obtain  a contribution to $A_n K_n(s)$ in (\ref{eq:5.1}). Let $U_2 = \{s \in \C,\omega \in \R:\: |s - \gamma(a)| \leq \mu_0, |\omega| \leq \ep_0\}$. 
Recall that for $(s, \omega) \in U_2$  the function $Z(s, \omega, a)$ (independent on $n$) has a pole $s(\omega, a)$ and
$$Z(s + \gamma(a) -1, \omega, a) = \Bigl(\frac{B_3(s(\omega, a), \omega, a)}{\int \tau d\nu_{f_a -s(\omega, a) \tau + \i \omega g}}\Bigr) \frac {1}{s + \gamma(a) - 1- s(\omega, a)} + J_5(s, \omega, a)$$ 
with a function $J_5(s, \omega, a)$ analytic in $s$ and real analytic in $\omega.$ Here we may repeat without any change the argument in Section 5 in \cite{W}. Applying the Morse lemma to the function $\Re s(\omega, a)$, there exists a function $y =y(\omega, a)$ defined for $|\omega|\leq \ep_0$ such that
$$\Re s(\omega, a) = \gamma(a) - y^2.$$
Therefore the analysis is reduced to the integral
$$\frac{\ep_n^2}{2\pi}\int_{-\ep_0}^{\ep_0} \frac{B_3\Bigl(\gamma(a)- y^2(\omega, a) +\i q(\omega, a),\omega, a\Bigr)}{s - 1 + y^2(\omega, a) + \i q(\omega, a)} \hat{\chi}(\ep_n \omega)d\omega,$$
where (see Lemma 3 in \cite{W}) $q(\omega, a) = \Im s(\omega, a)$ is such that $q(0, a) =\frac{\partial q}{\partial \omega}(0, a)  = \frac{\partial^2 q}{\partial\omega^2}(0, a) = 0,$ and 
$$\frac{\partial ^2}{\partial \omega^2}\Re s(0 , a) = -\sigma^2(m_{F +\xi(a) G})= - \beta''(\xi(a)) < 0$$ 
with $\beta(\xi)$ and $\xi(a)$ introduced in Section 1.\\

Next the analysis follows  that in Section 3 in \cite{KS} without any change.
After a change of variable $ \omega = \omega(y,a)$ the integral has the representation 
$$\frac{\ep_n^2 C(a)}{ \pi}\frac{\hat{\chi}(0)}{\sqrt{2\beta''(\xi(a))}}\Bigl[\int_{-y(\ep_0,a)}^{y(\ep_0,a)}\frac{1}{s - 1 + y^2}dy$$ 
$$- \int_{-y(\ep_0,a)}^{y(\ep_0,a)}\frac{\i Q(y, a)}{(s - 1 + y^2 +\i Q(y, a))(s - 1 + y^2)}dy\Bigr]$$
$$+\frac{\ep_n^2}{2\pi} \frac{\sqrt{2}}{\sqrt{\beta''(\xi(a))}}\int_{-y(\ep_0, a)}^{y(\ep_0,a)}\frac{P(y)}{s - 1 + y^2 +\i Q(y, a)}dy,$$ 
where 
$$C(a) = \frac{1}{(\int \tau d\mu)^2}B_3(\gamma(a), 0, a),$$
 $P(y)$ is a complex valued function such that $P(0) = 0$ and $Q(y, a)$ is real valued odd function with respect to $y$ such that $Q(0, a) = Q'_y(0,a) = Q''_{yy}(0,a) = 0.$  Here $B_3(\gamma(a),0, a)$ is given by (\ref{eq:4.1}) and we have used that 
$$\frac{\partial y}{\partial \omega}(0,a) = \Bigl(\sqrt{-\frac{1}{2} \frac{\partial^2}{\partial \omega^2}\Re s(\omega, a)}\Bigr)\Bigl\vert_{\omega = 0} = \frac{\beta''(\xi(a))}{\sqrt{2}}$$
combined with the Taylor expansions for the functions $\Re s(\omega, a),\: \Im s(\omega, a)$ and $\hat{\chi}(\ep_n \omega)$ around $\omega = 0.$ In particular,
$$\hat{\chi}(\ep_n \omega) = \hat{\chi}(0)+ \ep_n \omega(y, a) \hat{\chi}'(0) + {\mathcal O}(\ep_n^2\omega^2(y, a)).$$
The first term in the above representation yields the singularity $\frac{A_n}{\sqrt{s-1}}$ plus more regular terms, where 
$$A_n = \frac{C(a) \hat{\chi}(0)}{\sqrt{ 2 \beta''(\xi(a)}}\ep_n^2.$$
The dependence on $n$ is caused by the coefficient $\ep_n^2$ involved in $A_n$.
Finally,  we obtain
$$F_{1, n}(s) = \frac{A_n}{\sqrt{s- 1}} + A_n \omega_{1, n}(s)$$ 
and for $|\Im s| \leq \ep_0$ we have uniform with respect to $n$ bounds for the $L^1(-\ep_0, \ep_0)$ norms of $f_{1, n}(1 + \i t)$ and $f_{1, n}'(1 + \i t)$. 
Summing the analysis of $F_{k, n}(s),\: k = 1,2,3$, we get the representation (\ref{eq:5.1}).\\

 Our purpose is to apply Proposition 5.6 for the functions $g_n(t)$. To do this, we need to estimate the derivative
\begin{eqnarray} \label{eq:6.3}
g_n'(t) = \ep_n (- \gamma(a) + 1 ) e^{(-\gamma(a) + 1)t} \int_{\rt} q_n(G^t- at)(y) dm_F(y) \nonumber \\
+ \ep_n  e^{(-\gamma(a) + 1)t} \xi(a)\int_{\rt}   q_n(G^t - at)(y)(G(\sigma_t^{\tau}(y)) - a) dm_F(y)\nonumber\\
+   e^{(-\gamma(a) + 1)t}\int_{\rt} e^{\xi(a)(G^t(y) - at)} \chi'\Bigl(\frac{G^t - a t}{\ep_n}\Bigr)(y)(G(\sigma_t^{\tau}(y) - a)dm_F(y).
\end{eqnarray}
We can choose a function $0 \leq \psi (t) \in C_0^{\infty}(\R)$ such that $\psi(t) = M_1 > 0$ for $t \in \supp\: \chi(\frac{t}{\ep_n}),$ where the constant $M_1$ (independent of $n$) is chosen so  that
$$ \max_{t \in \R}\chi(t) + \max_{t \in \R} |\chi'(t)| < M_1.$$
 Then for every fixed compact $J \Subset \Gamma_G$ and $a \in J$  there exists a constant $C(J) > 0$ independent of $n$ and $a$ such that
$$|g_n'(t)| \leq  C(J) e^{(-\gamma(a) + 1)t} \int_{\rt} e^{\xi(a)(G^t - at)}\psi(G^t - at) (y)dm_F(y)= C(J) \Psi(t).$$
Notice that $C(J)$ depends on $\xi(a), a$ and the maximum of $G(w)$, but $C(J)$ is independent of $M_1$.
The problem is reduced to an estimate of $\Psi(t).$ For the nonnegative function $\Psi(t)$ we wish to apply Lemma 5.7.  Consider the Laplace transform 
$$Y(s) = \int_0^{\infty} e^{-s t} \Psi(t) dt$$
for $\Re s > 1$ and its limit as $\Re s \searrow 1$. The analysis is completely the same as that of $F_n(s)$, where the function $\hat{\chi}_n(\omega)$ must be replaced by 
$\hat{\psi}(\omega)$ which is independent on $n$. Therefore with some constant $A > 0$ one deduces the representation 
$$Y(s) = \frac{A}{\sqrt{s-1}} +P(s)$$
with function $P(s)$ having the properties of $K(s)$ mentioned in Lemma 5.7. To satisfy the condition (\ref{eq:5.18}),
first as above we obtain an upper bound
\begin{equation} \label{eq:6.4}
|\Psi'(t)| \leq C_1(J) e^{(-\gamma(a) + 1)t} \int_{\rt} e^{\xi(a)(G^t - at)}\psi_1(G^t - at) (y)dm_F(y)= C_1(J) \Psi_1(t),
\end{equation}
where $0 \leq \psi_1(t) \in C_0^{\infty}(\R)$ is such that for $t \in \supp\: \psi(t)$ we have
$$\psi_1(t) \geq  \max_{t \in \R}\psi(t) + \max_{t \in \R} |\psi'(t)|.$$
We repeat this procedure once more and with a function $0 \leq \psi_2(t) \in C_0^{\infty}(\R)$ and constant $C_2(J)$ one arranges the bound
$$|\Psi_1'(t)| \leq C_2(J) e^{(-\gamma(a) + 1)t} \int_{\rt} e^{\xi(a)(G^t - at)}\psi_2(G^t - at)(y) dm_F(y)= C_2(J) \Psi_2(t).$$
Now the analysis in Section 4 shows that the Laplace transforms
$$\int_0^{\infty} e^{-st}\Psi_k(t)dt,\: k = 1, 2 , $$
are analytic for $\Re s > 1$. Therefore the integral
$$\int_0^{\infty} e^{-s t}\Psi_1'(t) dt$$
is absolutely convergent for $\Re s > 1$, and for $ s = 1 + \delta > 1$ we have
$$\int_0^{\infty} e^{-st}\Psi_1'(t) dt = \Bigl[e^{-st} \Psi_1(t)\Bigr]_{0}^{\infty} + s \int_0^{\infty} e^{-st} \Psi_1(t) dt ,$$
hence for every $\delta >0$ we have $\lim_{t \to \infty} e^{-(1 + \delta) t} \Psi_1(t) = 0.$
This estimate combined with (\ref{eq:6.4}) implies a bound 
$$|\Psi'(t)| \leq C_{\delta}C_1(J)\frac{e^{(1+ \delta)t}}{\sqrt{t}},\: t \geq 1$$
 and we are in position to apply Lemma 5.7 for the function $\Psi(t)$. The statement of Lemma 5.7 yields the estimate (\ref{eq:5.20}) for $\Psi(t)$ with a constant $D_0 >0$ independent 
 on $n,\:C_{\delta},\:C_1(J),\:C_2(J)$ and we obtain
\begin{equation} \label{eq:6.5}
|g_n'(t)|\leq D_0 C(J)\frac{e^t}{\sqrt{t}},\: \forall t \geq t_0,\: \forall n \in \N.
\end{equation}
Following Remark 5.9, we may take $t_0 > 1$ large to guarantee the independence of $D_0.$

On the other hand, it is clear that we may estimate $\max_{0 \leq t \leq 1}g_n(t)$ uniformly with respect to $n$ and $a \in J.$
Thus we can apply Proposition 5.6 for $g_n(t)$.  We cancel the coefficients $\ep_n$ and $e^t$ in the estimates for $g_n(t)$ and one concludes that for fixed $ q \geq 0,\:T \geq n- q$ and any $0 < \eta \ll 1$ 
there exists $n_0(\eta) \in \N$ such that for $n \geq n_0(\eta) + q$ we have
\begin{equation} \label{eq:6.6}
\frac{\ep_n C(a) \int \chi(t) dt}{\sqrt{2 \pi T \beta''(\xi(a)}}e^{\gamma(a) T}(1- \eta) \leq \rho_n(T) \leq \frac{\ep_n C(a) \int \chi(t) dt}{\sqrt{2 \pi T \beta''(\xi(a)}}e^{\gamma(a)T}(1+ \eta).
\end{equation}
 It is easy to see that if one examines the function
$$\tilde{\rho}_n(T) = \int_U \chi\Bigl(\frac{(G^T - aT)(y)}{\ep_n}\Bigr) dm_F(y),$$
then for fixed $ q \geq 0$ and any $0 < \eta \ll 1$ there exists $n_0(y) \in \N$ such that for $n \geq n_0(\eta) + q$ we get 
\begin{equation} \label{eq:6.7}
\frac{\ep_n C(a) \int \chi(t) dt}{\sqrt{2 \pi T \beta''(\xi(a)}}e^{\gamma(a) T}(1- \eta) \leq \tilde{\rho}_n(T) \leq \frac{\ep_n C(a) \int \chi(t) dt}{\sqrt{2 \pi T \beta''(\xi(a)}}e^{\gamma(a)T}(1+ \eta).
\end{equation}
since the Fourier transform $\ep_n\hat{\chi}(\ep_n \omega)$ must be replaced by the Fourier transform $\ep_n\hat{\chi}(\ep_n(\omega -\i \xi(a))$ and
$$\int e^{-\ep_n\xi(a) t} \chi(t) dt = \int\chi(t) dt + {\mathcal O}(\ep_n).$$
Approximating the characteristic function ${\bf 1}_{[-1, 1]}(t)$ by cut-off functions $\chi_{\pm}(t)\in C_0^{\infty}(\R; [0, 1])$ such that
$$\chi_{-}(t) \leq{\bf 1}_{[-1, 1]}(t) \leq \chi_{+}(t),$$
 we obtain for $T \geq n - q,\: n \geq n_0(\eta) + q$ the estimates (\ref{eq:1.1}) and this proves Theorem 1.3. 
 
 \medskip

 It is worth noting that the derivatives $\chi_{+}'(t)$ may increase for $-1 -\delta \leq t \leq -1$ and $1 \leq t \leq 1 + \delta$, but this is not important for the estimate (\ref{eq:6.5}) since we may arrange $D_0$ to be 
 independent of the derivatives $\chi'_{+}(t)$ choosing $t \geq t_1$. This reflects in the choice of $n_0(\eta)$ in (\ref{eq:6.5}). The same observation holds for the derivative $\chi'_{-}(t)$ in $- 1 \leq t \leq 1 - \delta$ 
 and $1- \delta \leq t \leq 1.$\\
 
Now it is easy to pass to the analysis of the intervals $\Bigl(-\frac{e^{-\ep T}}{T}, \frac{e^{-\ep T}}{T}\Bigr).$ In fact, let $q = 1,\:0 < \eta \ll 1$ and $n_0(\eta) \in \N$ be as above. Let $T \geq n_0(\eta) +1$ and let 
$N(\eta) \geq n_0(\eta) + 1$ be chosen so that   $N(\eta) \leq T \leq N(\eta) + 1$. Obviously, we have
$\Bigl(-e^{-\ep (N(\eta) +1)}, e^{-\ep (N(\eta) +1)}\Bigr) \subset \Bigl(-e^{-\ep T}, e^{-\ep T}\Bigr) \subset \Bigl(-e^{-\ep N(\eta)}, e^{-\ep N(\eta)}\Bigr).$
Now we may examine
$$\zeta(T; a) = m_F\Bigl\{ w \in \rt: \int_0^T G(\sigma^{\tau}_t (w)dt - a T\in \Bigl(-e^{-\ep T}, e^{-\ep T}\Bigr) \Bigr\}.$$
For $T \geq n_0(\eta) +1$ we deduce the estimates
\begin{equation} \label{eq:6.9}
\frac{2e^{-\ep T} e^{-\ep} C(a)}{\sqrt{2 \pi T \beta''(\xi(a)}}e^{\gamma(a) T}(1- \eta)  \leq   \frac{2e^{-\ep(N(\eta) + 1)} C(a)}{\sqrt{2 \pi T \beta''(\xi(a)}}e^{\gamma(a) T}(1- \eta) \leq \zeta(T; a),
\end{equation}
\begin{equation} \label{eq:6.10}
  \zeta(T; a) \leq \frac{2e^{-\ep N(\eta)} C(a)}{\sqrt{2 \pi T \beta''(\xi(a)}}e^{\gamma(a)T}(1+ \eta) \leq \frac{2e^{\ep} e^{-\ep T} C(a)}{\sqrt{2 \pi T \beta''(\xi(a)}}e^{\gamma(a)T}(1+ \eta).
\end{equation}
To obtain (\ref{eq:6.9}), we exploit $T \geq [N(\eta) + 1] - 1$, while for (\ref{eq:6.10}) we use $T \geq N(\eta).$ These estimates prove the statement of Theorem 1.4.

 \renewcommand{\theequation}{A.\arabic{equation}}
  \setcounter{equation}{0}  
  \section*{Appendix}

\noindent
{\it Proof of Proposition} 2.2. In what follows we will denote global generic constants by $C > 0$ and $c > 0$.

\ms

(a) We will first show that $\tG$ is constant on stable leaves of $R^\tau$.
Let $\xi, \eta \in R^\tau$ be on the same stable leaf of $R^\tau$. Thus, $\xi = \pi(x,t)$, $\eta = \pi(y,t)$
for some $x,y \in R_i$ with $\piU(x) = \piU(y) = z\in U_i$ and some $t\in [0,\tau(x))$. Then $\tau(y) = \tau(x) = \tau(z)$.
Moreover, $s = \tau( \pp^{n+1}(x)) = \tau( \pp^{n+1}(y))$ and also $s = \tau(\pp^n(\piU(\pp(y)))$. Finally,
$\piU(\pp(x))= \piU(\pp(y))$. Using all these in the definition of $\tG$,  gives $\tG(x,t) = \tG(y,t)$.

To prove (2.6), write
\begin{eqnarray}
h(x) - h(\pp(x))
& = &  \sum_{n=0}^\infty \left[g(\pp^n(x)) - g(\pp^n(\piU(x))) \right] \nonumber\\
&    & \qquad  -  \sum_{n=0}^\infty \left[g(\pp^n(\pp(x))) - g(\pp^n(\piU(\pp(x)))) \right] \nonumber\\
& = & g(x) - g(\piU(x)) + \sum_{n=1}^\infty \left[g(\pp^n(x)) - g(\pp^n(\piU(x))) \right] \nonumber \\
&    & \qquad  -  \sum_{n=0}^\infty \left[g(\pp^n(\pp(x))) - g(\pp^n(\piU(\pp(x)))) \right] \\
& = & g(x) - g(\piU(x)) + \sum_{n= 0}^\infty \left[g(\pp^{n+1}(x)) - g(\pp^{n+1}(\piU(x))) \right] \nonumber\\
&    & \qquad  -  \sum_{n=0}^\infty \left[g (\pp^n(\pp(x))) - g (\pp^n(\piU(\pp(x)))) \right] \nonumber\\
& = & g(x) - \left[ g(\piU(x)) + \sum_{n= 0}^\infty \left(g(\pp^{n+1}(\piU(x)))  - g(\pp^n(\piU(\pp(x)))) \right) \right] \nonumber. 
\end{eqnarray}

Next, for every $x \in \hR$, using the change of variable $t \mapsto s = t\, \tau(\pp^{n+1}(x))/\tau (x)$ in some of the integrals below,  we get
\begin{eqnarray*}
\tg(x) 
& = & \int_0^{\tau(x)} \tG(x,t)\; dt  =  \int_0^{\tau(x)} G(\piU(x),t) \, dt\\
&    & + \sum_{n=0}^\infty [ \int_0^{\tau(x)} G(\pp^{n+1}(\piU(x))), t\,  \tau(\pp^{n+1}(x))/\tau(x)) \; dt \\
&    &  -  \int_0^{\tau(x)}  G(\pp^n(\piU(\pp(x))), t\, \tau(\pp^{n+1}(x))/ \tau(x))\, dt ] \\
& = & g(\piU(x)) + \sum_{n=0}^\infty [ \int_0^{\tau(\pp^{n+1}(x))} G(\pp^{n+1}(\piU(x))), s) \; ds \\
&    & -  \int_0^{\tau(\pp^{n+1}(x))}  G(\pp^n(\piU(\pp(x))), s)\, ds ] \\
& = & g(\piU(x)) + \sum_{n= 0}^\infty \left(g(\pp^{n+1}(\piU(x)))  - g(\pp^n(\piU(\pp(x)))) \right) .
\end{eqnarray*}
This and (A.1) imply $h(x) - h(\pp(x)) = g(x) - \tg(x)$, thus proving (2.6).

It remains to prove that $\tG$ and $h$ are $\beta$-H\"older on $\hrt$ and $\hR$ respectively, for some $\beta > 0$.

Let $x \neq y$ belong to some $R_i \cap \hR$ and let $0 \leq t < \tau(x)$ and $0 \leq t' < \tau(y)$. We may assume $\tau(x) \leq \tau(y)$.
We have to estimate $|\tG(x,t) - \tG(y,t')|$. We will first estimate $|\tG(x,t) - \tG(y,t)|$.

Set $\tx = \piU(x)$ and $\ty = \piU(y)$ and $s = t\, \tau(\pp^{n+1}(x))/\tau(x)$. Then $d(\tx,\ty) \leq C (d(x,y))^\alpha$ for some
global constant $C > 0$. 

Let $2m$ (or $2m+1$) be the maximal positive integer so that $\pp^j(x), \pp^j(y)$ belong to the same rectangle $R_{i_j}$ for $j = 0,1, \ldots, 2m-1$.
Then by (2.2), 
$$c \leq d(\pp^{2m}(x), \pp^{2m}(y)) \leq (c/c_0) \gamma_1^{2m} \, (d(x,y))^\alpha ,$$ 
so $d(x,y) \geq \frac{c}{\gamma_1^{2m/\alpha}}$, and therefore
\be
(d(x,y))^{\alpha \alpha'/2} \geq \frac{c}{\gamma_1^{\alpha' m}} = \frac{c}{(\rho \gamma)^m} > \frac{c}{\gamma^m} .
\ee

For any integer $n = 0,1, \ldots, m-1$, using (2.2) and the latter,  we get
\begin{eqnarray}
d(\pp^{n+1}(\tx), \pp^{n+1}(\ty)) 
& \leq & \frac{d(\pp^{2m}(\tx), \pp^{2m}(\ty))}{c_0 \gamma^{2m-n}} \leq \frac{C}{\gamma^{2m-n}}\nonumber \\
& \leq & \frac{C}{\gamma^{m-n}} \cdot \frac{1}{\gamma^m} \leq  \frac{C}{\gamma^{m-n}} \cdot (d(x,y))^{\alpha \alpha'/2} .
\end{eqnarray}

We will now estimate $|\tG(x,t) - \tG(y,t)|$. We have
\begin{eqnarray*}
|\tG(x,t) - \tG(y,t)|
& \leq & |G(\tx,t) - G(\ty,t)| 
      + \sum_{n=0}^{m-1} \left|G(\pp^{n+1}(\tx)), s) - G(\pp^{n+1}(\ty)), s)\right|\\
&       & +  \sum_{n=0}^{m-1} \left| G(\pp^n(\piU(\pp(x))), s) - G(\pp^n(\piU(\pp(x))), s) \right|\\
&       & +  \sum_{n=m}^\infty \left|G(\pp^{n+1}(\piU(x))), s) - G(\pp^n(\piU(\pp(x))), s)\right|\\
&       & +  \sum_{n=m}^\infty \left|G(\pp^{n+1}(\piU(y))), s) - G(\pp^n(\piU(\pp(y))), s)\right|\\
&  =   & I + II + III + IV + V.
\end{eqnarray*} 
Clearly, $I \leq |G|_\alpha (d(\tx, \ty))^\alpha \leq C\, |G|_\alpha \, (d(x,y))^{\alpha^2}$.

Next,
\begin{eqnarray*}
II
&    =  & \sum_{n=0}^{m-1} \left|G(\pp^{n+1}(\tx)), s) - G(\pp^{n+1}(\ty), s)\right|\\
& \leq & \sum_{n=0}^{m-1} |G|_\alpha \, \left(  \frac{C}{\gamma^{m-n}} \cdot (d(x,y))^{\alpha \alpha'/2}\right)^\alpha
\leq C\, |G|_\alpha \, (d(x,y))^\beta .
\end{eqnarray*}
Similarly, $III \leq C\, |G|_\alpha \, (d(x,y))^\beta$. 

Since $\pp(\piU(x))$ and $\piU(\pp(x))$ are on the same stable leaf of some rectangle, it follows from (2.3) that
\begin{eqnarray*}
 \left|G(\pp^{n+1}(\piU(x))), s) - G(\pp^n(\piU(\pp(x))), s)\right|\\
 \leq  |G|_\alpha\, (d(\pp^{n+1}(\piU(x))) , \pp^n(\piU(\pp(x)))^\alpha
 \leq C\, |G|_\alpha \frac{1}{\gamma^{\alpha n}} .
\end{eqnarray*}
This and (A.2) yield
\begin{eqnarray*}
IV \leq  C\, |G|_\alpha\, \sum_{n=m}^\infty \frac{1}{\gamma^{\alpha n}}  \leq C\, |G|_\alpha\, \frac{1}{\gamma^{\alpha m}}
\leq C\, |G|_\alpha\, (d(x,y))^{\alpha^2 \alpha'/2}  C\, |G|_\alpha\,  (d(x,y))^{\beta} .
\end{eqnarray*}
In a similar way we obtain $V \leq C\, |G|_\alpha\,  (d(x,y))^{\beta}$. Thus,
$$|\tG(x,t) - \tG(y,t)| \leq C\, |G|_\alpha\,  (d(x,y))^{\beta} .$$

An estimate of the form $|\tG(y,t) - \tG(y,t')| \leq C |G|_\alpha \, |t-t'|^\beta$ can be obtained rather easily, and we leave the
details to the reader. This proves that $\tG$ is $\beta$-H\"older and $|\tG|_\beta \leq |G|_\alpha$.

The proof that $h$ is also $\beta$-H\"older is very similar to the above, in fact it is easier. We leave the details to the reader.

\ms

(b) The proof that $H$ is  $\beta$-H\"older is very similar to the proof above that $\tG$ is $\beta$-H\"older. We leave the details to the reader.

To establish (2.9), replace $H(x,t)$ in the integral in (2.9) by the right-hand-side of (2.8) and use the change of variable $t \mapsto s = t\, \tau(\pp^{n}(x))/\tau (x)$.
What we obtain in this way is the right-hand-side of (2.5). This proves (2.9).

Finally, to establish (2.10), write:
\begin{eqnarray*}
&    & H(x,t) - H(\pp(x), t\, \tau(\pp(x))/\tau(x))\\
& = &   \sum_{n=0}^\infty \left[ G(\pp^n(x), t\, \tau(\pp^n(x))/\tau(x) ) - G(\pp^n(\piU(x)), t\, \tau(\pp^n(x))/\tau(x) )\right]\\
&    & -  \sum_{n=0}^\infty \left[ G(\pp^{n+1}(x), t\, \tau(\pp^{n+1}(x))/\tau(x) ) - G(\pp^n(\piU(\pp(x))), t\, \tau(\pp^{n+1}(x))/\tau(x) )\right]\\
& = & G(x,t) - G(\piU(x), t)\\
&    & +  \sum_{n=1}^\infty \left[ G(\pp^n(x), t\, \tau(\pp^n(x))/\tau(x) ) - G(\pp^n(\piU(x)), t\, \tau(\pp^n(x))/\tau(x) )\right]\\
&    & -  \sum_{n=0}^\infty \left[ G(\pp^{n+1}(x), t\, \tau(\pp^{n+1}(x))/\tau(x) ) - G(\pp^n(\piU(\pp(x))), t\, \tau(\pp^{n+1}(x))/\tau(x) )\right]\\
& = & G(x,t) - G(\piU(x), t)\\
&    & +  \sum_{n=0}^\infty \left[ G(\pp^{n+1}(x), t\, \tau(\pp^{n+1}(x))/\tau(x) ) - G(\pp^{n+1}(\piU(x)), t\, \tau(\pp^{n+1}(x))/\tau(x) )\right]\\
&    & -  \sum_{n=0}^\infty \left[ G(\pp^{n+1}(x), t\, \tau(\pp^{n+1}(x))/\tau(x) ) - G(\pp^n(\piU(\pp(x))), t\, \tau(\pp^{n+1}(x))/\tau(x) )\right]\\
& = & G(x,t) - ( \; G(\piU(x), t) +  \sum_{n=0}^\infty [ G(\pp^{n+1}(\piU(x)), t\, \tau(\pp^{n+1}(x))/\tau(x) )\\
&    & - G(\pp^n(\piU(\pp(x))), t\, \tau(\pp^{n+1}(x))/\tau(x) )] \; )\\
& = & G(x,t) - \tG(x,t) .
\end{eqnarray*}
This proves (2.10).

\subsection*{Acknowledgment}
Thanks are due to the referee for his useful comments and suggestions.

 \end{document}